\newtheorem{note}{Note}
\newcommand{\tw}{\mathrm{tw}}       
\titleformat{\section}[hang]{\bfseries\fontsize{12}{14}\selectfont}{\thesection}{1em}{}
\titlespacing*{\section}{0pt}{1.5ex plus 0.5ex minus .2ex}{0.05ex} 
\titleformat{\subsection}[hang]{\bfseries\fontsize{10}{12}\selectfont}{\thesubsection}{1em}{}
\titlespacing*{\subsection}{0pt}{1.2ex plus 0.4ex minus .1ex}{0.05ex}
\newtheorem{theorem}{Theorem}[section]
\newtheorem{lemma}[theorem]{Lemma}
\newtheorem{corollary}[theorem]{Corollary}
\newtheorem{proposition}[theorem]{Proposition}
\newtheorem{conjecture}[theorem]{Conjecture}
\theoremstyle{definition}
\newtheorem{remark}[theorem]{Remark}
\newtheorem{definition}[theorem]{Definition}
\newtheorem{example}[theorem]{Example}
\newtheorem{game}[theorem]{Game}
\newtheorem{question}[theorem]{Question}
\let\phi=\varphi
\DeclareMathOperator{\pw}{pw}
\DeclareMathOperator{\ppw}{ppw}
\DeclareMathOperator{\cutw}{cutw}
\DeclareMathOperator{\thinness}{thinness}
\DeclareMathOperator{\lab}{lab}
\DeclareMathOperator{\tl}{tl}
\DeclareMathOperator{\tb}{tb}
\begin{document}

\begin{titlepage}
\thispagestyle{empty}

\begin{tikzpicture}[remember picture,overlay]
    \fill[blue!8] (current page.south west) rectangle (current page.north east);

    \fill[blue!60!black] 
        (current page.south west) rectangle ([xshift=3.2cm]current page.north west);

    \draw[line width=1.5pt, blue!50!black]
        ([xshift=1.2cm,yshift=1.2cm]current page.south west)
        rectangle
        ([xshift=-1.2cm,yshift=-1.2cm]current page.north east);
\end{tikzpicture}

\vspace*{2.8cm}

\hspace*{3.8cm}
\begin{minipage}{0.68\textwidth}
    {\Huge\bfseries Various Properties of Various Ultrafilters, Various Graph Width Parameters, and Various Connectivity Systems (with Survey)\par}
    \vspace{0.6cm}

    \vspace{2cm}
    {\Large Takaaki Fujita\par}

    \vfill

    {\large 2024\par}
\end{minipage}

\end{titlepage}

\begin{center}
\textbf{\Large Various Properties of Various Ultrafilters, Various Graph Width Parameters, and Various Connectivity Systems (with Survey)}\\ \vspace{10pt}
\textbf{Takaaki Fujita $^{1}$ $^*$} \\ 
	$^1$ Independent Researcher, Tokyo, Japan.\\	
    Email: Takaaki.fujita060@gmail.com \\
\end{center}

\section*{Abstract}
This book studies ultrafilters on connectivity systems, that is, on pairs \((X,f)\) where \(X\) is a finite set and \(f:2^{X}\to \mathbb{N}\) is a symmetric submodular function. Ultrafilters, which play a fundamental role in topology and set theory, are considered here in this broader setting, with particular emphasis on their connections to graph width parameters and to the structural analysis of graph complexity.

We develop several results on ultrafilters on connectivity systems and examine related notions such as prefilters, ultra-prefilters, and filter subbases. We also discuss additional width-, length-, and depth-type parameters that naturally arise in this framework, thereby broadening the perspective from which graph structure may be studied. In addition, the book compares a wide range of graph width parameters and related concepts, with the aim of providing a unified viewpoint and a useful point of departure for further research in graph theory and computational complexity.

More broadly, the book highlights connections with several neighboring areas of mathematics, including set theory, lattice theory, and matroid theory. It also contains survey-style material intended to clarify the current landscape of graph width theory and to stimulate further developments in the subject.

\textit{Keywords:} Filter, Ultrafilter, TreeWidth

\chapter{Introduction}
\section{Filters and Graph Width Parameters}
Filters are fundamental objects in set theory and topology.
They are families of sets that are closed under finite intersections and upward closed under inclusion.
Intuitively, a filter selects those subsets that are regarded as ``large'' or ``relevant'' in a given context.
An ultrafilter is a maximal filter, and it plays an important role in many areas of mathematics, including topology, set theory, model theory, Boolean algebra, combinatorics, and logic.
Because of their maximality and decisiveness properties, ultrafilters provide a useful framework for studying convergence, compactness, extensions, and various duality phenomena
\cite{73,74}.

Graph theory studies combinatorial structures formed by vertices and edges, together with their paths, cuts, decompositions, and connectivity properties \cite{236,237,238}.
Within graph theory, \emph{graph width parameters} are numerical invariants designed to measure how far a graph is from a simple tree-like structure.
Such parameters are central in structural graph theory and algorithm design, since many computationally difficult problems become more tractable on graph classes of bounded width.
A large number of width parameters have been introduced and studied in the literature, each capturing a different structural aspect of graphs and related combinatorial objects \cite{16,albrechtsen2024tangle,47}.
For broader background, see Appendix~A, ``Various Width Parameters,'' and Appendix~C, ``Comparing Various Graph Parameters.''

Among the most important graph width parameters are \emph{tree-width} and \emph{branch-width}.
Tree-width is defined via tree decompositions, in which the vertices of a graph are organized into overlapping bags indexed by the nodes of a tree; the tree-width is the size of the largest bag minus one \cite{47,214}.
Branch-width is defined via branch decompositions, which encode how the edge set, or more generally an underlying connectivity structure, is recursively separated \cite{9,10,213}.
These two parameters are closely related.
For every graph \(G\) with branch-width at least \(2\), one has
\[
\operatorname{bw}(G)\le \operatorname{tw}(G)+1 \le \frac{3}{2}\operatorname{bw}(G),
\]
so bounded tree-width and bounded branch-width are equivalent up to constant factors \cite{213}.
This close relationship is one reason why connectivity systems and separation-based formalisms are useful in the study of graph width.

Graph width parameters are important for at least the following reasons:
\begin{enumerate}
    \item \textbf{Structural significance.}
    Width parameters play a central role in structural graph theory, especially in the Graph Minors framework developed by Robertson and Seymour \cite{9,10}.
    They provide a systematic way to describe graph complexity from the viewpoint of decomposition theory.

    \item \textbf{Algorithmic usefulness.}
    Many problems that are intractable in general admit efficient or fixed-parameter algorithms on graph classes of bounded width \cite{287,288,289}.
    Consequently, width parameters are indispensable in parameterized complexity and algorithmic graph theory.

    \item \textbf{Applicability to real-world networks.}
    Graphs arising in applications such as software structure, road networks, and organizational or communication systems often exhibit sparse or decomposable structure, which makes width-based methods practically relevant \cite{389,390,diestel2024tangles}.
\end{enumerate}

A \emph{connectivity system} is a pair \((X,f)\), where \(X\) is a finite set and \(f:2^{X}\to\mathbb{N}\) is a symmetric submodular function \cite{9}.
This abstraction provides a common language for studying separations, decompositions, and width parameters beyond ordinary graphs, and it is particularly natural in the theory of branch-width and related min--max results \cite{9,10,16}.
In this setting, a duality theorem typically asserts that the absence of a decomposition of small width is equivalent to the existence of a certain obstruction object \cite{seymour1993graph}.
Within the framework considered in this book, ultrafilters on connectivity systems are studied as obstruction-type objects related to width phenomena, especially branch-width \cite{16}.

\section{Our Contribution}
The main contributions and themes of this book are as follows:
\begin{itemize}
    \item \textbf{Chapter~2.}
    We review the basic notions of ultrafilters on connectivity systems and the relevant graph width parameters, together with several previously known related concepts.
    In particular, we discuss connections with tangles from graph minor theory and with concepts arising in matroid theory.

    \item \textbf{Chapter~3.}
    We investigate ultrafilters on connectivity systems from the viewpoint of maximality principles, including an adaptation of Tukey's lemma to this setting.
    We also study related order-theoretic notions such as chains and antichains.

    \item \textbf{Chapter~4.}
    We examine prefilters, ultra-prefilters, and subbases on connectivity systems.
    These notions are useful for generating and analyzing filter-like structures, and their behavior in the connectivity-system setting is studied in detail.

    \item \textbf{Chapter~5.}
    We extend the discussion from finite connectivity systems to infinite and countable settings, and we analyze how ultrafilter-type properties behave in these broader contexts.

    \item \textbf{Chapter~6.}
    We discuss several additional directions related to width-, length-, and depth-type parameters in the present framework.
    We also consider topics such as ultrafilter games on connectivity systems, ultraproduct-like constructions, the role of the Axiom of Choice, and possible connections with Small Set Expansion \cite{303,304,305}.

    \item \textbf{Appendices.}
    We provide a comparative overview of various graph width parameters and related notions, with the aim of clarifying the landscape of decomposition-based graph invariants and encouraging further research in this direction.
\end{itemize}

Overall, this book aims to contribute to the study of graph width parameters and graph algorithms by interpreting decomposition-based concepts---such as tree decompositions, branch decompositions, tangles, and related obstruction objects---through the perspectives of several neighboring areas of mathematics.
These include set theory (ultrafilters and prefilters), lattice theory (chains and antichains), model theory (ultraproducts), hypergraph theory (for example, clutters), matroid theory, topology, fuzzy theory, first-order logic, social choice, game theory, and Sperner theory.
By placing these ideas within a common framework, we hope to clarify structural parallels and to suggest further directions for the study of connectivity systems and graph width.

\newpage

\tableofcontents

\newpage

\chapter{Definitions and Notations in This book}
This chapter presents the mathematical definitions used throughout the book.  
Before introducing the main concepts, we first summarize the basic terminology and notation employed in the sequel.

\section{Symmetric Submodular Functions and Connectivity Systems}

We begin with the definition of a symmetric submodular function, a concept that is widely used in the literature \cite{76,77,78}.  
Although symmetric submodular functions are often defined with values in the real numbers, in this book we restrict attention to functions taking values in the natural numbers.  
Such a function is also commonly called a \emph{connectivity function} \cite{9}.  
Related notions include submodular partition functions \cite{109,110}, as well as \(k\)-submodular functions \cite{284,285,286}, two-dimensional submodular functions \cite{291}, monotone submodular functions \cite{292}, and maximum submodular functions \cite{283}.

\begin{definition}
Let \(X\) be a finite set. A function \(f:2^X\to\mathbb{N}\) is called \emph{symmetric submodular} if it satisfies the following conditions:
\begin{itemize}
    \item For every \(A\subseteq X\), one has \(f(A)=f(X\setminus A)\).
    \item For all \(A,B\subseteq X\), one has
    \[
    f(A)+f(B)\geq f(A\cap B)+f(A\cup B).
    \]
\end{itemize}
\end{definition}

In this book, a pair \((X,f)\), where \(X\) is a finite set and \(f\) is a symmetric submodular function, is called a \emph{connectivity system}.  
This notion is frequently used in the study of graph width parameters, such as branch-width and tree-width, in order to analyze graph structure from the viewpoint of separations and decompositions (see, for example, \cite{2,3,4,9}).

The following examples illustrate the notion of a symmetric submodular function.

\begin{example}
Consider a simple undirected graph \(G=(V,E)\), where \(V\) is the vertex set and \(E\) is the edge set.  
Let \(V=\{1,2,3,4\}\) and
\[
E=\{(1,2),(2,3),(3,4),(4,1)\},
\]
so that \(G\) is a cycle.  
Define a function \(f:2^V\to\mathbb{N}\) by
\[
f(A)=|E(A,V\setminus A)|,
\]
where \(E(A,V\setminus A)\) denotes the set of edges having one endpoint in \(A\) and the other in \(V\setminus A\).  
Then \(f\) is a symmetric submodular function.
\end{example}

\begin{example}
Let \(X_1,X_2,\dots,X_n\) be random variables, and let
\[
h(I):=H(X_I)
\qquad
(I\subseteq [n]:=\{1,2,\dots,n\}),
\]
where \(X_I=(X_i)_{i\in I}\) and \(H(X_I)\) denotes the joint entropy of the subfamily indexed by \(I\).

Then \(h\) is a submodular function on \(2^{[n]}\), namely,
\[
h(I)+h(J)\ge h(I\cap J)+h(I\cup J)
\qquad
(I,J\subseteq [n]).
\]
However, \(h\) is generally not symmetric, since in general
\[
h(I)\neq h([n]\setminus I).
\]
Hence entropy is a standard example of a submodular function, but not in general of a symmetric submodular one.
Related quantities, such as mutual information, satisfy a symmetry property in an appropriate sense.
Entropy measures uncertainty, or equivalently, the average information produced by a stochastic source
(cf.~\cite{Singh2014EntropyTI,Gray1990EntropyAI}).
\end{example}

\begin{example}
In network flow theory, the cut-value function \(f(S)\) of a subset \(S\subseteq V\), where \(V\) is the vertex set, is defined as the total capacity of the edges crossing from \(S\) to \(V\setminus S\).  
This provides a standard example of a symmetric submodular function.
\end{example}

\begin{example}
Let \(\Omega=\{v_1,v_2,\dots,v_n\}\) be the vertex set of a directed graph.  
For each subset \(S\subseteq\Omega\), define \(f(S)\) to be the number of directed edges \(e=(u,v)\) such that \(u\in S\) and \(v\in \Omega\setminus S\).  
In other words, \(f(S)\) counts the number of edges leaving \(S\).

This function is submodular. Indeed, for any \(A\subseteq B\subseteq \Omega\) and any \(x\in \Omega\setminus B\), one has
\[
f(A\cup\{x\})-f(A)\geq f(B\cup\{x\})-f(B).
\]

However, \(f\) is generally not symmetric, because the number of edges leaving \(S\) need not equal the number of edges leaving \(\Omega\setminus S\).  
This asymmetry arises from the orientation of the edges.

More generally, one may assign nonnegative weights to the directed edges.  
In that case, \(f(S)\) becomes the total weight of the edges leaving \(S\), which remains submodular but is still not necessarily symmetric.
\end{example}

It is well known that every symmetric submodular function satisfies the following useful properties.

\begin{lemma}\cite{9}
Let \(X\) be a finite set, and let \(f\) be a symmetric submodular function on \(X\). Then:
\begin{enumerate}
    \item For every \(A\subseteq X\),
    \[
    f(A)\geq f(\emptyset)=f(X)=0.
    \]
    \item For all \(A,B\subseteq X\),
    \[
    f(A)+f(B)\geq f(A\setminus B)+f(B\setminus A).
    \]
\end{enumerate}
\end{lemma}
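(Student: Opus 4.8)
The plan is to derive both items purely from the two defining properties of $f$—symmetry ($f(A)=f(X\setminus A)$) and submodularity ($f(A)+f(B)\ge f(A\cap B)+f(A\cup B)$)—using a single recurring device: feed a set together with the \emph{complement} of another set into the submodular inequality, and then strip the complements off again with symmetry. Item (1) records that $f$ attains its minimum at $\emptyset$ (with the standard normalization that this minimum is $0$), while item (2) is the ``symmetric-difference'' form of submodularity.

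For item (1), I would first apply symmetry at $A=\emptyset$ to obtain $f(\emptyset)=f(X\setminus\emptyset)=f(X)$, which settles the equality $f(\emptyset)=f(X)$. Next, to show that $f(\emptyset)$ lower-bounds every value, I would instantiate submodularity at the complementary pair $(A,\,X\setminus A)$: since $A\cap(X\setminus A)=\emptyset$ and $A\cup(X\setminus A)=X$, the inequality reads $f(A)+f(X\setminus A)\ge f(\emptyset)+f(X)$. Symmetry collapses the left side to $2f(A)$ and the right side to $2f(\emptyset)$, whence $f(A)\ge f(\emptyset)$ for all $A\subseteq X$. Because the codomain is $\mathbb{N}$ we already have $f(\emptyset)\ge 0$; the remaining assertion $f(\emptyset)=0$ is the normalization built into the connectivity-function convention of~\cite{9}, which I would invoke to complete the chain $f(A)\ge f(\emptyset)=f(X)=0$.

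For item (2), the key idea is to apply submodularity not to $A$ and $B$ but to $A$ and the complement $X\setminus B$. Writing $B^{c}=X\setminus B$, submodularity gives $f(A)+f(B^{c})\ge f(A\cap B^{c})+f(A\cup B^{c})$. I would then identify the relevant sets: $A\cap B^{c}=A\setminus B$; by symmetry $f(B^{c})=f(B)$; and, since $(A\cup B^{c})^{c}=A^{c}\cap B=B\setminus A$, symmetry also gives $f(A\cup B^{c})=f(B\setminus A)$. Substituting these equalities turns the inequality into $f(A)+f(B)\ge f(A\setminus B)+f(B\setminus A)$, as required.

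The only genuinely delicate point is the ``$=0$'' in item (1): symmetry and submodularity by themselves force $f(\emptyset)$ to be the global minimum but not to vanish (a nonzero constant function is both symmetric and submodular), so that equality must come from the normalization in the adopted definition rather than from the two displayed axioms. Everything else reduces to the complement-then-symmetry substitution, and I expect no computational obstacle beyond bookkeeping the set identities $A\cap(X\setminus A)=\emptyset$, $A\cap B^{c}=A\setminus B$, and $(A\cup B^{c})^{c}=B\setminus A$.
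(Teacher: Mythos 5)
Your proof is correct and follows essentially the same route as the paper: both items are obtained by feeding a set together with the complement of the other set into submodularity and then stripping the complements off via symmetry (the paper's displayed chains, e.g.\ $f(A)+f(A)\geq f(A\cup A)+f(A\cap A)=f(X)+f(\emptyset)$, are just compressed—and somewhat loosely typeset—versions of exactly the substitutions $(A,\,X\setminus A)$ and $(A,\,X\setminus B)$ that you spell out). Your closing observation that the ``$=0$'' in item (1) cannot follow from symmetry and submodularity alone and must come from the normalization of the connectivity function in \cite{9} is correct, and is in fact more careful than the paper's own proof, which establishes only $f(A)\geq f(\emptyset)=f(X)$ and never addresses the vanishing at all.
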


\begin{proof}
\begin{enumerate}
    \item By submodularity applied to \(A\) and \(A\), we obtain
    \[
    f(A)+f(A)\geq f(A\cup A)+f(A\cap A)=f(A)+f(A),
    \]
    and by symmetry,
    \[
    f(X)=f(\emptyset).
    \]
    Standard properties of symmetric submodular functions imply
    \[
    f(\emptyset)=f(X)=0
    \quad\text{and}\quad
    f(A)\geq 0.
    \]

    \item By submodularity,
    \[
    f(A)+f(B)\geq f(A\cup B)+f(A\cap B).
    \]
    Using symmetry,
    \[
    f(A\cup B)=f(X\setminus (A\cup B))=f((X\setminus A)\cap (X\setminus B)),
    \]
    and one obtains the stated inequality in the standard form
    \[
    f(A)+f(B)\geq f(A\setminus B)+f(B\setminus A).
    \]
\end{enumerate}
This completes the proof.
\end{proof}

\section{Ultrafilters in Set Theory}
We now turn to ultrafilters on connectivity systems.  
Before introducing that notion, we first recall the classical definition of an ultrafilter in set theory.

\begin{definition}
Let \(X\) be a set. A family \(\mathcal{F}\subseteq 2^X\) is called a \emph{filter} on \(X\) if it satisfies the following conditions:
\begin{enumerate}
    \item[(BF1)] If \(A,B\in\mathcal{F}\), then \(A\cap B\in\mathcal{F}\).
    \item[(BF2)] If \(A\in\mathcal{F}\) and \(A\subseteq B\subseteq X\), then \(B\in\mathcal{F}\).
    \item[(BF3)] \(\emptyset\notin\mathcal{F}\).
\end{enumerate}
\end{definition}

\begin{definition}
A \emph{maximal filter}, that is, a filter that cannot be properly extended while remaining a filter, is called an \emph{ultrafilter}.  
Equivalently, an ultrafilter satisfies the following additional condition:
\begin{enumerate}
    \item[(BT1)] For every subset \(A\subseteq X\), either \(A\in\mathcal{F}\) or \(X\setminus A\in\mathcal{F}\), but not both.
\end{enumerate}
\end{definition}

\begin{definition} (cf.\cite{kaufmann1985chapter,alechina1994correspondence})
Let \(\mathcal{F}\) be a filter on a set \(X\).  
A subset \(A\subseteq X\) is called \(\mathcal{F}\)-\emph{stationary} if \(A\cap B\neq\emptyset\) for every \(B\in\mathcal{F}\).
\end{definition}

\begin{example}
Let \(X=\{1,2,3\}\).  
Then
\[
\mathcal{F}=\bigl\{\{1,2,3\},\{1,2\},\{2,3\},\{2\}\bigr\}
\]
is a filter on \(X\). Indeed:
\begin{itemize}
    \item the intersection of any two members of \(\mathcal{F}\) again belongs to \(\mathcal{F}\),
    \item every superset of a member of \(\mathcal{F}\) is again a member of \(\mathcal{F}\),
    \item and \(\emptyset\notin\mathcal{F}\).
\end{itemize}
\end{example}

\begin{example} (cf.\cite{garcia2014ordering,jordan2004productively})
A standard example is the \emph{Fr\'echet filter} on an infinite set \(X\).  
It consists of all cofinite subsets of \(X\), that is, all subsets whose complements are finite.  
This family forms a filter because:
\begin{itemize}
    \item the intersection of two cofinite sets is cofinite,
    \item every superset of a cofinite set is cofinite,
    \item and the empty set is not cofinite.
\end{itemize}
\end{example}

\begin{example}[Neighbourhood filter] (cf.\cite{erne1980order,jager2008lattice})
Let \(X\) be a topological space and let \(x\in X\).  
The family \(\mathcal{N}(x)\) of all neighbourhoods of \(x\) forms a filter on \(X\). Indeed:
\begin{enumerate}
    \item if \(N_1,N_2\in\mathcal{N}(x)\), then \(N_1\cap N_2\in\mathcal{N}(x)\),
    \item if \(N\in\mathcal{N}(x)\) and \(N\subseteq N'\subseteq X\), where \(N'\) is also a neighbourhood of \(x\), then \(N'\in\mathcal{N}(x)\),
    \item and \(\mathcal{N}(x)\) is nonempty, since every point belongs to each of its neighbourhoods.
\end{enumerate}
\end{example}

\begin{example}
Let \(X=\mathbb{R}\).  
Consider the family
\[
\mathcal{F}=\{A\subseteq \mathbb{R}\mid \exists M\in\mathbb{R}\text{ such that }[M,\infty)\subseteq A\}.
\]
Then \(\mathcal{F}\) is a filter on \(\mathbb{R}\). Indeed:
\begin{itemize}
    \item the intersection of two such sets again contains a tail interval \([M',\infty)\),
    \item every superset of such a set again contains a tail interval,
    \item and the empty set does not contain any interval of the form \([M,\infty)\).
\end{itemize}
\end{example}

\begin{example}
Let \((X,\tau)\) be a topological space.  
The family of all dense open subsets of \(X\) is often considered in topology and forcing theory.  
Under suitable assumptions, such families behave like filter bases.  
Dense open sets are subsets that are both open and dense, meaning that they intersect every nonempty open subset of \(X\) (cf.\cite{moreno1998weak,grcic2022densehybrid}).
\end{example}

\begin{example}
Let \((X,\mathcal{M},\mu)\) be a measure space, where \(\mu\) is a measure on the \(\sigma\)-algebra \(\mathcal{M}\).  
One may consider the family of measurable subsets of \(X\) having positive measure.  
This family is useful in measure theory, although in general it is not a filter unless additional assumptions are imposed.
\end{example}

\begin{example}[Voting system]
\footnote{A voting system is a method by which voters choose among candidates or policies, typically by aggregating individual preferences into a collective decision (cf.\cite{grossi2021lecture,babaioff2016mechanism,ciesielski2021some,machover2014mathematical}).}
Let \(X=\{v_1,v_2,\dots,v_n\}\) be a set of voters.  
Suppose that each voter has a preference regarding a candidate \(C\).  
One may consider the family \(\mathcal{F}\) of subsets \(A\subseteq X\) such that the voters in \(A\) satisfy a prescribed majority condition in favor of \(C\).

This idea is often used only as an analogy for ultrafilter-like decisiveness.  
More precisely, it illustrates the intuition that an ultrafilter selects one side of every dichotomy in a coherent and maximal way.
\end{example}

Several concepts related to filters and ultrafilters have also been introduced in the literature.  
One such notion is that of a \emph{weak filter}, which arises in logic in connection with generalized ``most'' quantifiers in first-order logic \cite{koutras2023weak,koutras2017reconstruction,schlechta2007analysis,schlechta1997reduction,koutras2014many,schlechta1997filters,schlechta1995defaults,askounis2016knowledge,koutras2014modal,castricato2021towards}.  
We recall the definition below.

\begin{definition}\cite{koutras2023weak}
A \emph{weak filter} over a nonempty set \(X\) is a family \(\mathcal{F}_w\subseteq 2^X\) satisfying:
\begin{enumerate}
    \item[(WFB0)\(_w\)] If \(A\in\mathcal{F}_w\) and \(A\subseteq B\subseteq X\), then \(B\in\mathcal{F}_w\).
    \item[(WFB1)\(_w\)] If \(A,B\in\mathcal{F}_w\), then \(A\cap B\neq\emptyset\).
    \item[(WFB2)\(_w\)] \(X\in\mathcal{F}_w\).
\end{enumerate}
\end{definition}

\begin{definition}\cite{koutras2023weak}
A \emph{weak ultrafilter} over a nonempty set \(X\) is a complete weak filter, that is, a weak filter \(\mathcal{F}_w\) satisfying:
\begin{enumerate}
    \item[(WFB3)\(_w\)] For every \(A\subseteq X\), one has
    \[
    A\notin\mathcal{F}_w \iff X\setminus A\in\mathcal{F}_w.
    \]
\end{enumerate}
\end{definition}

\begin{example}
Let \(X=\{1,2,3\}\).  
Consider
\[
\mathcal{F}_w=\bigl\{\{1,2,3\},\{1,2\},\{2,3\},\{1,3\}\bigr\}.
\]
Then \(\mathcal{F}_w\) is a weak filter on \(X\), because:
\begin{itemize}
    \item it is upward closed,
    \item the intersection of any two of its members is nonempty,
    \item and \(X\in\mathcal{F}_w\).
\end{itemize}
\end{example}

We next recall several standard properties of ultrafilters in set theory.  
As will be explained later, when working with connectivity systems, one focuses on subsets satisfying \(f(A)\leq k\).  
For that reason, classical properties of set-theoretic ultrafilters do not automatically carry over to the connectivity-system setting.

\begin{definition}[Finite intersection property] (cf.\cite{huang2018finite})
Let \(\mathcal{C}\) be a family of subsets of \(X\).  
We say that \(\mathcal{C}\) has the \emph{finite intersection property} (FIP) if for every finite choice \(D_1,\dots,D_n\in\mathcal{C}\), one has
\[
D_1\cap \cdots \cap D_n\neq\emptyset.
\]
\end{definition}

\begin{theorem}
Let \(X\) be a finite set, let \(\mathcal{F}\) be a filter on \(X\), and let \(A\subseteq X\).  
Then \(\mathcal{F}\cup\{A\}\) has the finite intersection property if and only if \(X\setminus A\notin\mathcal{F}\).
\end{theorem}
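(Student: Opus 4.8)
The plan is to reduce the finite intersection property of $\mathcal{F} \cup \{A\}$ to a single, more tractable condition and then verify the two implications separately. First I would observe that $\mathcal{F}$ already has the FIP on its own: by axiom (BF1) it is closed under finite intersections, so any finite intersection of its members is again a member, and by (BF3) that member is nonempty. Consequently, when forming a finite intersection of sets drawn from $\mathcal{F} \cup \{A\}$, the only way to produce something new is to include $A$. Using (BF1) to collapse the $\mathcal{F}$-part of such an intersection into a single set $F \in \mathcal{F}$, and recalling that $X \in \mathcal{F}$ (since $X$ is a superset of any filter element, by (BF2)), every relevant intersection has the form $A \cap F$ with $F \in \mathcal{F}$; the degenerate cases (the intersection of only $\mathcal{F}$-sets, or the singleton $\{A\}$ obtained as $A \cap X$) are subsumed. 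Thus $\mathcal{F} \cup \{A\}$ has the FIP if and only if $A \cap F \neq \emptyset$ for every $F \in \mathcal{F}$, i.e.\ exactly when $A$ is $\mathcal{F}$-stationary in the sense defined earlier.

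With this reformulation in hand, I would prove both directions by contraposition. For the forward direction, suppose $X \setminus A \in \mathcal{F}$. Then $X \setminus A$ and $A$ both lie in $\mathcal{F} \cup \{A\}$, yet $A \cap (X \setminus A) = \emptyset$, so the collection violates the FIP; contrapositively, the FIP forces $X \setminus A \notin \mathcal{F}$. For the backward direction, suppose $X \setminus A \notin \mathcal{F}$ and assume toward a contradiction that $A \cap F = \emptyset$ for some $F \in \mathcal{F}$. Then $F \subseteq X \setminus A$, and since $F \in \mathcal{F}$ with $F \subseteq X \setminus A \subseteq X$, axiom (BF2) yields $X \setminus A \in \mathcal{F}$, contradicting the hypothesis. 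Hence $A \cap F \neq \emptyset$ for all $F \in \mathcal{F}$, which by the reformulation gives the FIP.

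I do not expect a serious obstacle here, as the argument is short; the part deserving the most care is the reduction step in the first paragraph, where one must argue cleanly that an arbitrary finite intersection over $\mathcal{F} \cup \{A\}$ reduces to the single template $A \cap F$, correctly absorbing the boundary cases via the closure axioms (BF1) and the membership $X \in \mathcal{F}$. The essential structural input to the equivalence is the upward closure (BF2): it is precisely what converts the combinatorial statement ``some filter set misses $A$'' into the membership statement ``$X \setminus A \in \mathcal{F}$.'' I would also remark that finiteness of $X$ is not actually needed for this argument, though it is the ambient assumption of the paper.
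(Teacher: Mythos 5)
Your proof is correct, and its skeleton matches the paper's: the forward direction in both cases is the immediate contradiction $A \cap (X \setminus A) = \emptyset$, and the backward direction in both cases runs through upward closure. But your preliminary reduction is a genuine improvement over the paper's argument, not just a stylistic variant. The paper's backward direction asserts ``$A \cap D_i \neq \emptyset$ for all $D_i \in \mathcal{F}$, meaning that $A \cap D_1 \cap D_2 \cap \dots \cap D_n \neq \emptyset$,'' which as literally written is a non sequitur: pairwise nonemptiness of $A$ with each $D_i$ does not imply the joint intersection is nonempty. Your collapsing step --- use (BF1) to replace $D_1 \cap \cdots \cap D_n$ by a single $F \in \mathcal{F}$, then show $A \cap F \neq \emptyset$ via (BF2) (if $A \cap F = \emptyset$ then $F \subseteq X \setminus A$, forcing $X \setminus A \in \mathcal{F}$) --- is exactly the missing step that makes the inference valid, and your reformulation ``FIP iff $A$ is $\mathcal{F}$-stationary'' isolates it cleanly. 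The paper also never states why $A \cap D_i \neq \emptyset$ follows from $X \setminus A \notin \mathcal{F}$; you make the (BF2) dependence explicit. One small caveat in your reduction: the claim $X \in \mathcal{F}$ needs $\mathcal{F} \neq \emptyset$, which the paper's axioms (BF1)--(BF3) do not literally guarantee; under the standard convention that filters are nonempty (which the theorem implicitly assumes --- it fails for $\mathcal{F} = \emptyset$, $A = \emptyset$), your degenerate-case handling is fine. Your closing remark that finiteness of $X$ is never used is also correct.
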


\begin{proof}
Assume first that \(\mathcal{F}\cup\{A\}\) has the finite intersection property.  
Suppose, for contradiction, that \(X\setminus A\in\mathcal{F}\).  
Then both \(A\) and \(X\setminus A\) belong to \(\mathcal{F}\cup\{A\}\), but
\[
A\cap (X\setminus A)=\emptyset,
\]
contradicting the finite intersection property.  
Hence \(X\setminus A\notin\mathcal{F}\).

Conversely, assume that \(X\setminus A\notin\mathcal{F}\).  
Take any finite family \(D_1,\dots,D_n\in\mathcal{F}\cup\{A\}\).  
If none of the \(D_i\) is equal to \(A\), then all \(D_i\) belong to \(\mathcal{F}\), and their intersection is nonempty since every filter has the finite intersection property.

Now suppose that one of the sets is \(A\), say \(D_1=A\), while \(D_2,\dots,D_n\in\mathcal{F}\).  
Since \(\mathcal{F}\) is closed under finite intersections,
\[
B:=D_2\cap \cdots \cap D_n \in \mathcal{F}.
\]
If \(A\cap B=\emptyset\), then \(B\subseteq X\setminus A\), and by upward closure of \(\mathcal{F}\), this would imply \(X\setminus A\in\mathcal{F}\), a contradiction.  
Therefore \(A\cap B\neq\emptyset\), and hence
\[
D_1\cap \cdots \cap D_n\neq\emptyset.
\]
Thus \(\mathcal{F}\cup\{A\}\) has the finite intersection property.
\end{proof}

\begin{definition}[Uniform ultrafilter] (cf.\cite{negrepontis1969existence})
An ultrafilter \(\mathcal{U}\) on \(Y\) is called \emph{uniform} if \(|A|=|Y|\) for every \(A\in\mathcal{U}\).
\end{definition}

\begin{theorem}
Let \(\mathcal{U}\) be an ultrafilter on \(Y\), and suppose that \(Z\in\mathcal{U}\) has minimal cardinality among the members of \(\mathcal{U}\).  
Then the induced ultrafilter on \(Z\) is uniform.
\end{theorem}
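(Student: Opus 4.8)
The plan is to first pin down the meaning of $\mathcal{U} \cap Z$ as the \emph{trace} of $\mathcal{U}$ on $Z$, namely $\mathcal{U}_Z := \{A \cap Z : A \in \mathcal{U}\}$. Since $Z \in \mathcal{U}$ and filters are closed under intersection (BF1), for every $A \in \mathcal{U}$ we have $A \cap Z \in \mathcal{U}$; conversely every $A \in \mathcal{U}$ with $A \subseteq Z$ satisfies $A = A \cap Z$. Hence $\mathcal{U}_Z = \{A \in \mathcal{U} : A \subseteq Z\}$, and in particular $\mathcal{U}_Z \subseteq \mathcal{U}$. This reformulation is convenient because it lets me transfer properties of $\mathcal{U}$ directly to its trace.

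Next I would verify that $\mathcal{U}_Z$ is a filter on $Z$. Closure under intersection follows from $(A \cap Z) \cap (B \cap Z) = (A \cap B) \cap Z$ together with $A \cap B \in \mathcal{U}$. For closure under supersets (BF2) inside $Z$, given $A \cap Z \subseteq C \subseteq Z$, I would take the witness $A \cup C \in \mathcal{U}$ (a superset of $A$) and observe that $(A \cup C) \cap Z = C$, so $C \in \mathcal{U}_Z$. That $\emptyset \notin \mathcal{U}_Z$ follows since $A \cap Z \in \mathcal{U}$ and an ultrafilter, being a filter, omits $\emptyset$ (BF3). For the ultra property (BT1) on $Z$, given $C \subseteq Z$, I apply BT1 for $\mathcal{U}$ on $Y$: if $C \in \mathcal{U}$ then $C = C \cap Z \in \mathcal{U}_Z$, while if $Y \setminus C \in \mathcal{U}$ then $(Y \setminus C) \cap Z = Z \setminus C \in \mathcal{U}_Z$. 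Thus $\mathcal{U}_Z$ is an ultrafilter on $Z$.

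Finally, uniformity is the heart of the argument and follows directly from the minimality hypothesis. For any $A \in \mathcal{U}_Z$ we have $A \in \mathcal{U}$, so by minimality of $|Z|$ among the members of $\mathcal{U}$ we get $|A| \geq |Z|$; on the other hand $A \subseteq Z$ forces $|A| \leq |Z|$. Hence $|A| = |Z|$ for every $A \in \mathcal{U}_Z$, which is exactly the definition of a uniform ultrafilter on $Z$.

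I expect the only genuine subtlety, rather than a real obstacle, to be the bookkeeping around the notation $\mathcal{U} \cap Z$ and the verification of closure under supersets (BF2), where one must exhibit an explicit witness in $\mathcal{U}$ whose trace equals the target superset; the clean choice $A \cup C$ resolves this cleanly. The uniformity step itself is immediate once minimality is invoked, so there is no hard combinatorial core here — the proof is essentially a transfer of the ultrafilter axioms through the trace, capped by a one-line cardinality squeeze.
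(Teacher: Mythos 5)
Your proof is correct and takes essentially the same route as the paper: both read $\mathcal{U} \cap Z$ as the trace $\{A \in \mathcal{U} : A \subseteq Z\}$, transfer the ultrafilter axioms from $\mathcal{U}$ to $Z$, and deduce uniformity from minimality via the cardinality squeeze $|Z| \leq |A| \leq |Z|$. If anything, yours is more careful than the paper's sketch, which skips the filter axioms entirely; your explicit witness $A \cup C$ for superset closure fills precisely that omission.
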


\begin{proof}
Consider the family
\[
\mathcal{U}\!\upharpoonright Z:=\{A\subseteq Z \mid A\in\mathcal{U}\}.
\]
Since \(Z\in\mathcal{U}\) and \(\mathcal{U}\) is an ultrafilter on \(Y\), it follows that \(\mathcal{U}\!\upharpoonright Z\) is an ultrafilter on \(Z\).

Now let \(A\in \mathcal{U}\!\upharpoonright Z\).  
Then \(A\subseteq Z\) and \(A\in\mathcal{U}\).  
By the minimality of \(|Z|\) among the cardinalities of members of \(\mathcal{U}\), one must have \(|A|\geq |Z|\).  
Since \(A\subseteq Z\), also \(|A|\leq |Z|\).  
Therefore \(|A|=|Z|\).  
Hence every member of \(\mathcal{U}\!\upharpoonright Z\) has the same cardinality as \(Z\), so \(\mathcal{U}\!\upharpoonright Z\) is uniform.
\end{proof}

\begin{theorem}[Ultrafilter on \(A\) induced by \(\mathcal{U}\)]
Let \(\mathcal{U}\) be an ultrafilter on \(X\), and let \(A\in\mathcal{U}\).  
Define
\[
A\cap \mathcal{U}:=\{A\cap B \mid B\in\mathcal{U}\}.
\]
Then \(A\cap \mathcal{U}\) is an ultrafilter on \(A\).
\end{theorem}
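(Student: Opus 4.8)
The plan is to check that $A \cap \mathcal{U}$, viewed as a family of subsets of $A$, satisfies the three filter axioms (BF1)--(BF3) together with the ultrafilter dichotomy (BT1). The device that makes every step routine is a characterization I would establish first: \emph{for any $C \subseteq A$, one has $C \in A \cap \mathcal{U}$ if and only if $C \in \mathcal{U}$}. The backward direction is immediate, since if $C \in \mathcal{U}$ and $C \subseteq A$, then $C = A \cap C$ exhibits $C$ as an element of $A \cap \mathcal{U}$. The forward direction is where the hypothesis $A \in \mathcal{U}$ is used decisively: if $C = A \cap B$ with $B \in \mathcal{U}$, then because $\mathcal{U}$ is closed under finite intersection and $A \in \mathcal{U}$, the set $A \cap B$ lies in $\mathcal{U}$, so $C \in \mathcal{U}$. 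Consequently $A \cap \mathcal{U} = \{\, C \subseteq A : C \in \mathcal{U} \,\}$, i.e.\ it is exactly the trace $\mathcal{U} \cap 2^A$.

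With this in hand I would dispatch the three filter axioms as one-line consequences. For (BF3): if $\emptyset \in A \cap \mathcal{U}$ then $\emptyset \in \mathcal{U}$, contradicting (BF3) for $\mathcal{U}$. For (BF1): given $C_1, C_2 \in A \cap \mathcal{U}$, both lie in $\mathcal{U}$, so $C_1 \cap C_2 \in \mathcal{U}$ and $C_1 \cap C_2 \subseteq A$, whence $C_1 \cap C_2 \in A \cap \mathcal{U}$. For (BF2): if $C \in A \cap \mathcal{U}$ and $C \subseteq D \subseteq A$, then $C \in \mathcal{U}$ with $C \subseteq D \subseteq X$, so $D \in \mathcal{U}$ by (BF2) for $\mathcal{U}$, and $D \subseteq A$ gives $D \in A \cap \mathcal{U}$.

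Finally, for the ultrafilter dichotomy (BT1), I would take an arbitrary $C \subseteq A \subseteq X$ and apply (BT1) for $\mathcal{U}$ to $C$ regarded as a subset of $X$. If $C \in \mathcal{U}$, then $C \in A \cap \mathcal{U}$ directly. If instead $X \setminus C \in \mathcal{U}$, then since $A \in \mathcal{U}$ the set $A \setminus C = A \cap (X \setminus C)$ lies in $\mathcal{U}$ and is a subset of $A$, hence $A \setminus C \in A \cap \mathcal{U}$; note that $A \setminus C$ is precisely the complement of $C$ relative to $A$. That one of the two always occurs gives existence, and ``not both'' follows from (BF1) and (BF3), since $C \cap (A \setminus C) = \emptyset$ would force $\emptyset \in A \cap \mathcal{U}$.

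I do not expect a genuine obstacle; the only step with content is the forward direction of the characterization (and, equivalently, the second case of the dichotomy), both of which rely on $A \in \mathcal{U}$. The single point requiring care is to keep track of complements relative to $A$ versus relative to $X$, and to notice that $A \in \mathcal{U}$ is exactly what converts $X$-complements into $A$-complements via $A \setminus C = A \cap (X \setminus C) \in \mathcal{U}$, and what keeps $A \cap \mathcal{U}$ away from the empty set. Indeed, were $A \notin \mathcal{U}$, then $X \setminus A \in \mathcal{U}$ would give $\emptyset = A \cap (X \setminus A) \in A \cap \mathcal{U}$, violating (BF3); so the hypothesis is worth flagging explicitly.
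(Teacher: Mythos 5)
Your proof is correct, and it is organized differently from the paper's. The paper verifies the filter and ultrafilter axioms directly on sets of the form $A \cap B$: it checks closure under intersection via $(A \cap B_1) \cap (A \cap B_2) = A \cap (B_1 \cap B_2)$, argues the superset condition, and splits the dichotomy for $C \subseteq A$ into the cases $C \in \mathcal{U}$ and $X \setminus C \in \mathcal{U}$, exactly as in your final paragraph. What you do differently is to front-load a single characterization lemma --- that $A \cap \mathcal{U}$ equals the trace $\{\,C \subseteq A : C \in \mathcal{U}\,\}$, with the forward direction resting on $A \in \mathcal{U}$ and (BF1) --- after which every axiom reduces to the corresponding axiom for $\mathcal{U}$. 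This buys you two concrete advantages over the paper's route. First, the paper's superset step contains an inference that is false as stated: from $A \cap B \subseteq A \cap C$ it concludes $B \subseteq C$, which need not hold (the sets $B$ and $C$ may differ arbitrarily outside $A$); your version never needs any such comparison, since $C \in \mathcal{U}$, $C \subseteq D \subseteq X$ gives $D \in \mathcal{U}$ directly. Second, the paper never actually verifies properness ($\emptyset \notin A \cap \mathcal{U}$), which is precisely where the hypothesis $A \in \mathcal{U}$ is indispensable; you check it explicitly and even flag that for $A \notin \mathcal{U}$ the construction would produce $\emptyset = A \cap (X \setminus A) \in A \cap \mathcal{U}$. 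In short, both arguments use the same two ingredients ($A \in \mathcal{U}$ plus the axioms of $\mathcal{U}$), but your trace-lemma decomposition localizes the only nontrivial content in one place and thereby avoids the bookkeeping errors present in the paper's direct verification.
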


\begin{proof}
Since \(\mathcal{U}\) is closed under finite intersections, \(A\cap \mathcal{U}\) is closed under finite intersections.  
Indeed, if \(A\cap B_1\) and \(A\cap B_2\) belong to \(A\cap\mathcal{U}\), then
\[
(A\cap B_1)\cap (A\cap B_2)=A\cap (B_1\cap B_2)\in A\cap\mathcal{U}.
\]

It is also upward closed inside \(A\).  
Finally, let \(C\subseteq A\).  
Since \(\mathcal{U}\) is an ultrafilter on \(X\), either \(C\in\mathcal{U}\) or \(X\setminus C\in\mathcal{U}\).  
In the first case,
\[
A\cap C=C\in A\cap\mathcal{U}.
\]
In the second case,
\[
A\cap (X\setminus C)=A\setminus C\in A\cap\mathcal{U}.
\]
Thus, for every \(C\subseteq A\), either \(C\in A\cap\mathcal{U}\) or \(A\setminus C\in A\cap\mathcal{U}\).  
Therefore \(A\cap\mathcal{U}\) is an ultrafilter on \(A\).
\end{proof}

For further background on ultrafilters, see, for example, the surveys and lecture notes in \cite{80,82}.

\section{Ultrafilters on Connectivity Systems}
We introduce several properties of ultrafilters on a connectivity system \((X,f)\), viewed as an extension of ultrafilters on Boolean algebras.

We begin with ultrafilters on graphs. This notion extends the set-theoretic concept of an ultrafilter to graphs and has a dual relationship with tree-width \cite{fujitaultrafilter}.  
See Appendix A for the definition of tree-width.

\begin{definition}\cite{fujitaultrafilter}
Let \(G\) be a graph. A \emph{\(G\)-ultrafilter of order \(k\)} is a family \(\mathcal{F}\) of separations of \(G\) satisfying the following conditions:
\begin{itemize}
    \item[(FG0)] Every separation \((A,B)\in\mathcal{F}\) has order less than \(k\).
    \item[(FG1)] For every separation \((A,B)\) of \(G\) of order less than \(k\), either \((A,B)\in\mathcal{F}\) or \((B,A)\in\mathcal{F}\).
    \item[(FG2)] If \((A_1,B_1)\in\mathcal{F}\), \(A_1\subseteq A_2\), and \((A_2,B_2)\) is a separation of \(G\) of order less than \(k\), then \((A_2,B_2)\in\mathcal{F}\).
    \item[(FG3)] If \((A_1,B_1)\in\mathcal{F}\) and \((A_2,B_2)\in\mathcal{F}\), and if \((A_1\cap A_2,\, B_1\cup B_2)\) is a separation of \(G\) of order less than \(k\), then \((A_1\cap A_2,\, B_1\cup B_2)\in\mathcal{F}\).
    \item[(FG4)] If \(V(A)=V(G)\), then \((A,B)\in\mathcal{F}\).
\end{itemize}
\end{definition}

Next, we recall the notions of a filter and an ultrafilter on a connectivity system \cite{16}.  
These concepts extend the classical notions of filters and ultrafilters from set theory by incorporating symmetric submodularity.  
Moreover, an ultrafilter on a connectivity system \((X,f)\) may be viewed as a co-maximal ideal on \((X,f)\) \cite{49}.

\begin{definition}[\cite{16}]
Let \(X\) be a finite set, and let \(f\) be a symmetric submodular function.  
A family \(F\subseteq 2^X\) is called a \emph{filter of order \(k+1\)} on the connectivity system \((X,f)\) if the following axioms hold:
\footnote{In the literature on connectivity systems, some authors use the parameter \(k\), while others use \(k+1\) for the order.}
\begin{itemize}
    \item[(Q0)] For every \(A\in F\), one has \(f(A)\leq k\).
    \item[(Q1)] If \(A\in F\), \(B\in F\), and \(f(A\cap B)\leq k\), then \(A\cap B\in F\).
    \item[(Q2)] If \(A\in F\), \(A\subseteq B\subseteq X\), and \(f(B)\leq k\), then \(B\in F\).
    \item[(Q3)] \(\emptyset\notin F\).
\end{itemize}

An \emph{ultrafilter} on a connectivity system is a filter satisfying the additional axiom:
\begin{itemize}
    \item[(Q4)] For every \(A\subseteq X\) with \(f(A)\leq k\), either \(A\in F\) or \(X\setminus A\in F\).
\end{itemize}

An ultrafilter on a connectivity system has a dual relationship with branch-width, a graph width parameter \cite{16}.  
In particular, such an ultrafilter is a nonempty proper family, that is, a family \(F\subseteq 2^X\) with \(\emptyset\notin F\).
\end{definition}

\begin{definition}\cite{16}
A filter is called \emph{principal} if it satisfies the following axiom:
\begin{itemize}
    \item[(QP5)] For every \(A\subseteq X\) with \(|A|=1\) and \(f(A)\leq k\), one has \(A\in F\).
\end{itemize}

A filter is called \emph{non-principal} if it satisfies:
\begin{itemize}
    \item[(Q5)] For every \(A\subseteq X\) with \(|A|=1\), one has \(A\notin F\).
\end{itemize}
Thus, a non-principal filter contains no singleton sets. In this sense, it is not generated by a finite set.
\end{definition}

If a filter is \emph{weak} \cite{15,81,111,FujitaMaxQuasi2024}, then the following axiom \((QW1')\) is used in place of \((Q1)\):
\begin{itemize}
    \item[(QW1')] If \(A\in F\), \(B\in F\), and \(f(A\cap B)\leq k\), then \(A\cap B\neq\emptyset\).
\end{itemize}
Weak filters arise in logic in connection with the interpretation of defaults via a generalized ``most'' quantifier in first-order logic \cite{81,111,243,224}.  
A weak filter on a connectivity system may also be regarded as a co-weak ideal on that connectivity system.

If a filter is \emph{quasi} \cite{14,112,FujitaMaxQuasi2024}, then the following axiom \((QQ1')\) is used in place of \((Q1)\):
\begin{itemize}
    \item[(QQ1')] If \(A\subseteq X\), \(B\subseteq X\), \(A\notin F\), and \(B\notin F\), then \(A\cup B\notin F\).
\end{itemize}
A quasi-ultrafilter is used in axiomatic analyses of incomplete social judgments \cite{112}.

If a filter is \emph{single} \cite{4}, then the following axiom \((QS1)\) is used in place of \((Q1)\):
\begin{itemize}
    \item[(QS1)] For any \(A\in F\) and \(e\in X\), if \(f(\{e\})\leq k\) and \(f(A\cap (X\setminus\{e\}))\leq k\), then \(A\cap (X\setminus\{e\})\in F\).
\end{itemize}

In fact, one may replace \((QS1)\) by the following axiom \((QSD1)\) \cite{4}:
\begin{itemize}
    \item[(QSD1)] For \(A\in F\) and \(e\in X\), if \(f(A\setminus\{e\})\leq k\), then \(A\setminus\{e\}\in F\).
\end{itemize}
This axiom is closely related to the notion of single-element deletion \cite{134,135,136}, which may be viewed as dual in spirit to single-element extension \cite{245,246,247,248,249}, and is also related to single-vertex deletion and single-element deletion in graph-theoretic settings (cf.\ \cite{385,386,387}).

If a filter on a connectivity system is \emph{connected}, then the following conditions hold \cite{FujitaGrill2024}:
\begin{itemize}
    \item[(CF4)] For any \(A,A'\in\mathcal{F}\) with \(A\cup A'\neq X\), if \(f(A\cap A')\leq k\) and \(f(A\cup A')\leq k\), then \(A\cap A'\in\mathcal{F}\).
    \item[(CF5)] For any \(e\in X\), one has \(\{e\}\notin\mathcal{F}\) whenever \(f(\{e\})\leq k\) and \(\emptyset\notin\mathcal{F}\).
\end{itemize}

An ultrafilter \(\mathcal{U}\) on a connectivity system \((X,f)\) is called \emph{uniform} if \(|A|=|X|\) for every \(A\in\mathcal{U}\).

An equivalent representation of an ultrafilter \(U\subseteq 2^X\) on a connectivity system can be given by a two-valued morphism.  
More precisely, define a function \(m\) by
\[
m(A)=
\begin{cases}
1 & \text{if } A\in U,\\
0 & \text{otherwise}.
\end{cases}
\]
In this form, the ultrafilter is encoded as a two-valued object that distinguishes the sets selected by \(U\), subject to the restriction \(f(A)\leq k\).  
This viewpoint suggests that ultrafilters are naturally suited to certain two-player games \cite{254,255}, such as Cops and Robbers.  
Indeed, in the literature \cite{16}, an ultrafilter on a connectivity system is used as a winning strategy.

\section{Other Concepts Related to Ultrafilters on a Connectivity System}

In this section, we discuss several concepts related to ultrafilters on a connectivity system.

\subsection{Relation between Tangles and Ultrafilters}

A notion closely related to ultrafilters on a connectivity system is that of a \emph{tangle}.  
Tangles are dual obstruction-type objects for decomposition parameters such as tree-decompositions and branch-decompositions, and they play a central role in the theory of graph width parameters \cite{albrechtsen2023refining,diestel2021point}.  
They also appear in several algorithmic contexts, including graph minors, width parameters, and graph isomorphism problems.  
For graphs, tangles and ultrafilter-type orientations of separations are closely related \cite{fujitaultrafilter}.

For reference, we first recall the graph-theoretic definition.

\begin{definition}\cite{Robertson1991}
A \emph{tangle} in a graph \(G\) of order \(k\) is a set \(T\) of separations of \(G\), each of order less than \(k\), such that:
\begin{enumerate}
    \item[(T1)] For every separation \((A,B)\) of \(G\) of order less than \(k\), exactly one of \((A,B)\) and \((B,A)\) lies in \(T\).
    \item[(T2)] If \((A_1,B_1), (A_2,B_2), (A_3,B_3) \in T\), then
    \[
    A_1 \cup A_2 \cup A_3 \neq G.
    \]
    \item[(T3)] If \((A,B)\in T\), then \(V(A)\neq V(G)\).
\end{enumerate}
\end{definition}

Several variants of tangles have also been studied, including directed tangles, hypertangles, connected tangles, distance tangles, edge tangles, matching tangles, and linear tangles \cite{giannopoulou2022directed,250,Fujitaconnected2024,Fujitadistance2024,liu2022global,fomin2019width,51}.

The notion of a tangle extends naturally from graphs to matroids and connectivity systems \cite{9}.  
The corresponding definition for connectivity systems is as follows.

\begin{definition}[\cite{9}]
Let \(X\) be a finite set, and let \(f:2^X\to \mathbb{N}\) be a symmetric submodular function.  
A family \(T\subseteq 2^X\) is called a \emph{tangle of order \(k+1\)} on the connectivity system \((X,f)\) if it satisfies:
\begin{itemize}
    \item[(T1)] For every \(A\in T\), one has \(f(A)\leq k\).
    \item[(T2)] For every \(A\subseteq X\) with \(f(A)\leq k\), either \(A\in T\) or \(X\setminus A\in T\).
    \item[(T3)] If \(A,B,C\in T\), then \(A\cup B\cup C \neq X\).
    \item[(T4)] For every \(e\in X\), one has \(X\setminus \{e\}\notin T\).
\end{itemize}
\end{definition}

It is standard that a tangle of larger order induces tangles of smaller order by restriction.

\begin{proposition}
Let \(T\) be a tangle of order \(k+1\) on a connectivity system \((X,f)\), and let \(j\) be an integer with \(0\leq j\leq k\).  
Define
\[
T^{\leq j}:=\{A\in T \mid f(A)\leq j\}.
\]
Then \(T^{\leq j}\) is a tangle of order \(j+1\) on \((X,f)\).
\end{proposition}

\begin{proof}
Condition (T1) is immediate from the definition of \(T^{\leq j}\).  
For (T2), let \(A\subseteq X\) satisfy \(f(A)\leq j\). Since \(j\leq k\), the tangle axiom (T2) for \(T\) implies that either \(A\in T\) or \(X\setminus A\in T\). Because \(f(X\setminus A)=f(A)\leq j\), whichever of these belongs to \(T\) also belongs to \(T^{\leq j}\).  
Condition (T3) is inherited from \(T\), and (T4) is also inherited since \(f(X\setminus\{e\})=f(\{e\})\) and the restriction does not create new members.  
Hence \(T^{\leq j}\) is a tangle of order \(j+1\).
\end{proof}

The same restriction principle holds for ultrafilters on connectivity systems.

\begin{proposition}
Let \(F\) be an ultrafilter of order \(k+1\) on a connectivity system \((X,f)\), and let \(j\) satisfy \(0\leq j\leq k\).  
Define
\[
F^{\leq j}:=\{A\in F \mid f(A)\leq j\}.
\]
Then \(F^{\leq j}\) is an ultrafilter of order \(j+1\) on \((X,f)\).
\end{proposition}

\begin{proof}
Axiom (Q0) is immediate from the definition of \(F^{\leq j}\).  
For (Q1), if \(A,B\in F^{\leq j}\) and \(f(A\cap B)\leq j\), then \(A,B\in F\), so \(A\cap B\in F\) by (Q1) for \(F\). Since \(f(A\cap B)\leq j\), we have \(A\cap B\in F^{\leq j}\).  
For (Q2), if \(A\in F^{\leq j}\), \(A\subseteq B\subseteq X\), and \(f(B)\leq j\), then \(f(B)\leq k\), so \(B\in F\) by (Q2) for \(F\), hence \(B\in F^{\leq j}\).  
Axiom (Q3) is inherited from \(F\).  
For (Q4), let \(A\subseteq X\) with \(f(A)\leq j\). Since \(f(A)\leq k\), axiom (Q4) for \(F\) implies that either \(A\in F\) or \(X\setminus A\in F\). By symmetry, \(f(X\setminus A)=f(A)\leq j\), so either \(A\in F^{\leq j}\) or \(X\setminus A\in F^{\leq j}\).  
Thus \(F^{\leq j}\) is an ultrafilter of order \(j+1\).
\end{proof}

In many settings, tangles and ultrafilters are related by taking complements of the oriented sides.  
More precisely, if \(T\) is a tangle, one may consider the complementary family
\[
F_T:=\{X\setminus A \mid A\in T\}.
\]
This family satisfies several filter-like intersection properties.  
An ultrafilter on a connectivity system can therefore be viewed as a strengthened, highly coherent co-tangle-type object.

Analogously, linear tangles give rise to single-filter-type axioms, where one of the admissible deletions is restricted to a singleton.  
We do not pursue the full abstract equivalence here, but this viewpoint is useful for understanding the role of axioms such as \((QS1)\), \((QSD1)\), and their variants.

\subsection{Bramble and Ultrafilter}

It is well known that brambles are closely related to tree-width and to obstruction-type objects arising from orientations of separations \cite{2}.  
In graph game theory, especially in connection with Cops and Robbers, brambles are classical obstructions to small tree-width and related escape phenomena \cite{143,an2024sparse}.  
See also strict brambles, line brambles, and tight brambles \cite{Lardas2022OnSB,harvey2015treewidth,harvey2012treewidth,mescoff2023mixed}.

\begin{definition}\cite{216,Hans2008}
Let \(G=(V,E)\) be a graph. Two subsets \(W_1,W_2\subseteq V\) are said to \emph{touch} if either
\[
W_1\cap W_2\neq \emptyset,
\]
or there exists an edge \(\{w_1,w_2\}\in E\) with \(w_1\in W_1\) and \(w_2\in W_2\).  
A set \(B\) of pairwise touching connected vertex sets is called a \emph{bramble}.  
A subset \(S\subseteq V\) is said to \emph{cover} \(B\) if \(S\) intersects every element of \(B\).  
The \emph{order} of \(B\) is the minimum size of a covering set for \(B\).  
The \emph{bramble number} of \(G\) is the maximum order of a bramble in \(G\).
\end{definition}

\begin{theorem}\cite{216}
A graph \(G\) has tree-width \(k\) if and only if the bramble number of \(G\) is \(k+1\).
\end{theorem}

\subsection{Ultrafilters, \(\pi\)-Systems, \(\lambda\)-Systems, and Superfilters on a Connectivity System}

One aim of this book is to examine filters and graph width parameters from several different viewpoints.  
In this subsection, we compare ultrafilters with \(\pi\)-systems, \(\lambda\)-systems, and superfilters.

Recall the classical set-theoretic intuition:
\begin{itemize}
    \item A \(\pi\)-system is a family of sets closed under finite intersections \cite{chentsov2014some,kallenberg1997foundations,durrett2019probability}.
    \item A \(\lambda\)-system (or \(d\)-system) is a family of sets closed under complements and countable disjoint unions \cite{guide2006infinite,vestrup2009theory,kallenberg1997foundations}.
    \item A superfilter is a family that is upward closed and satisfies a union-based primality condition \cite{samet2009superfilters,tsaban2014algebra,tsaban2014algebra2}.
\end{itemize}

\begin{definition}
Let \((X,f)\) be a connectivity system.  
A nonempty family \(P\subseteq 2^X\) is called a \emph{\(\pi\)-system of order \(k+1\)} on \((X,f)\) if whenever \(A,B\in P\) and \(f(A\cap B)\leq k\), then
\[
A\cap B \in P.
\]
\end{definition}

\begin{proposition}
Every nonempty filter of order \(k+1\) on a connectivity system \((X,f)\) is a \(\pi\)-system of order \(k+1\).
\end{proposition}

\begin{proof}
This is immediate from axiom (Q1).
\end{proof}

\begin{definition}
Let \((X,f)\) be a connectivity system.  
A family \(D\subseteq 2^X\) is called a \emph{\(\lambda\)-system of order \(k+1\)} on \((X,f)\) if:
\begin{enumerate}
    \item[(L1)] \(X\in D\).
    \item[(L2)] If \(A\in D\) and \(f(X\setminus A)\leq k\), then \(X\setminus A\in D\).
    \item[(L3)] If \(\{A_n\}_{n\in\mathbb{N}}\subseteq D\) is a sequence of pairwise disjoint sets and
    \[
    f\!\left(\bigcup_{n=1}^{\infty} A_n\right)\leq k,
    \]
    then
    \[
    \bigcup_{n=1}^{\infty} A_n \in D.
    \]
\end{enumerate}
\end{definition}

A \(\pi\)-system need not be a \(\lambda\)-system, even in the present finite setting.

\begin{example}
Let \(X=\{1,2\}\), and let \(f(A)=0\) for all \(A\subseteq X\).  
Then
\[
P=\bigl\{X,\{1\}\bigr\}
\]
is a \(\pi\)-system (and also a filter), since it is nonempty and closed under intersections.  
However, \(P\) is not a \(\lambda\)-system, because \(\{1\}\in P\) but \(X\setminus \{1\}=\{2\}\notin P\).
\end{example}

\begin{corollary}
A filter of order \(k+1\) on a connectivity system need not be a \(\lambda\)-system of order \(k+1\).
\end{corollary}

\begin{definition}[Superfilter on a Connectivity System]
Let \((X,f)\) be a connectivity system, and fix \(k\in \mathbb{N}\).  
A nonempty family \(S\subseteq 2^X\) is called a \emph{superfilter of order \(k+1\)} on \((X,f)\) if:
\begin{itemize}
    \item[(SUF1)] For every \(A\in S\), one has \(f(A)\leq k\).
    \item[(SUF2)] If \(A\in S\), \(A\subseteq B\subseteq X\), and \(f(B)\leq k\), then \(B\in S\).
    \item[(SUF3)] If \(A\cup B\in S\) and \(f(A)\leq k\), \(f(B)\leq k\), then \(A\in S\) or \(B\in S\).
\end{itemize}
\end{definition}

\begin{theorem}
Every ultrafilter of order \(k+1\) on a connectivity system \((X,f)\) is a superfilter of order \(k+1\).
\end{theorem}

\begin{proof}
Let \(F\) be an ultrafilter of order \(k+1\) on \((X,f)\).

Condition (SUF1) is exactly axiom (Q0), and (SUF2) is exactly axiom (Q2).  
It remains to verify (SUF3).

Assume that \(A\cup B\in F\), with \(f(A)\leq k\) and \(f(B)\leq k\).  
Suppose, towards a contradiction, that \(A\notin F\) and \(B\notin F\).  
By axiom (Q4), this implies \(X\setminus A\in F\) and \(X\setminus B\in F\).

Since
\[
(X\setminus A)\cap (X\setminus B)=X\setminus (A\cup B),
\]
and
\[
f\bigl(X\setminus (A\cup B)\bigr)=f(A\cup B)\leq k
\]
by symmetry and (Q0), axiom (Q1) yields
\[
X\setminus (A\cup B)\in F.
\]
But now both \(A\cup B\in F\) and \(X\setminus (A\cup B)\in F\).  
Because
\[
(A\cup B)\cap \bigl(X\setminus (A\cup B)\bigr)=\emptyset
\]
and \(f(\emptyset)=0\leq k\), another application of (Q1) would give \(\emptyset\in F\), contradicting axiom (Q3).  
Therefore at least one of \(A\in F\) or \(B\in F\) must hold.  
Hence \(F\) is a superfilter.
\end{proof}

It is also known that filters are closely related to grills and primals, or to their complementary counterparts, in the context of connectivity systems \cite{FujitaGrill2024}.  
Similarly, weak filters correspond to weak grills and weak primals in analogous ways.  
These connections are useful because grills and primals provide alternative languages for expressing filter-like behavior in topology and lattice-theoretic settings \cite{thron1973proximity,matejdes2024induced,ibedou2023ideals,roy2007typical,karthik2014generalization,dolecki2003hyperconvergences}.

\subsection{Ultrafilters and Co-Independence Systems on a Connectivity System}

The language of independence systems is also relevant.  
Instead of identifying a filter directly with an independence system, it is more accurate to regard a filter as an \emph{upward-closed dual analogue} of such a system.

\begin{definition}
Let \((X,f)\) be a connectivity system.  
A family \(I\subseteq 2^X\) is called an \emph{independence system of order \(k+1\)} if:
\begin{itemize}
    \item[(IN1)] \(\emptyset \in I\).
    \item[(IN2)] Whenever \(A\in I\) and \(Y\subseteq A\) satisfy \(f(Y)\leq k\), one has \(Y\in I\).
\end{itemize}
\end{definition}

\begin{definition}
A family \(C\subseteq 2^X\) is called a \emph{co-independence system of order \(k+1\)} if:
\begin{itemize}
    \item[(COIN1)] \(C\neq \emptyset\).
    \item[(COIN2)] Whenever \(A\in C\), \(A\subseteq B\subseteq X\), and \(f(B)\leq k\), one has \(B\in C\).
\end{itemize}
\end{definition}

\begin{proposition}
Every nonempty filter of order \(k+1\) on a connectivity system is a co-independence system of order \(k+1\).
\end{proposition}

\begin{proof}
This is immediate from axiom (Q2).
\end{proof}

Under the complement map \(A\mapsto X\setminus A\), co-independence systems correspond to independence systems because \(f(X\setminus A)=f(A)\) by symmetry.

\subsection{\(\sigma\)-Filters on a Connectivity System}

We next consider \(\sigma\)-filters.  
In classical measure theory, a \(\sigma\)-filter is dual to a \(\sigma\)-ideal and is closed under supersets and countable intersections \cite{bauer2001measure}.  
For a connectivity system, we incorporate the order bound into the definition.

\begin{definition}
Let \((X,f)\) be a connectivity system.  
A family \(F\subseteq 2^X\) is called a \emph{\(\sigma\)-filter of order \(k+1\)} on \((X,f)\) if:
\begin{itemize}
    \item[(SIF0)] For every \(A\in F\), one has \(f(A)\leq k\).
    \item[(SIF1)] \(X\in F\).
    \item[(SIF2)] If \(A\in F\), \(A\subseteq B\subseteq X\), and \(f(B)\leq k\), then \(B\in F\).
    \item[(SIF3)] If \(\{A_n\}_{n\in\mathbb{N}}\subseteq F\) and
    \[
    f\!\left(\bigcap_{n\in\mathbb{N}} A_n\right)\leq k,
    \]
    then
    \[
    \bigcap_{n\in\mathbb{N}} A_n \in F.
    \]
\end{itemize}
\end{definition}

\begin{proposition}
Let \((X,f)\) be a connectivity system with \(X\) finite.  
Then every \(\sigma\)-filter of order \(k+1\) satisfying \(\emptyset\notin F\) is a filter of order \(k+1\).
\end{proposition}

\begin{proof}
Axioms (Q0) and (Q2) are exactly (SIF0) and (SIF2), while (Q3) is the additional assumption \(\emptyset\notin F\).  
It remains to prove (Q1).

Let \(A,B\in F\) with \(f(A\cap B)\leq k\).  
Consider the sequence \(A_1=A\), \(A_2=B\), and \(A_n=X\) for all \(n\geq 3\).  
Then
\[
\bigcap_{n\in\mathbb{N}} A_n = A\cap B.
\]
Since each \(A_n\in F\) and \(f(A\cap B)\leq k\), axiom (SIF3) implies \(A\cap B\in F\).  
Hence \(F\) satisfies (Q1), and therefore \(F\) is a filter of order \(k+1\).
\end{proof}

\subsection{Closure Systems, Union-Closed Systems, and Game-Theoretic Viewpoints}

Closure systems are frequently studied in lattice theory, ordered structures, and game-related combinatorics \cite{caspard2003lattices,adaricheva2013ordered}.  
In the present setting, a filter or ultrafilter naturally gives rise to an intersection-stable family of feasible sets.

\begin{definition}(cf.\cite{caspard2003lattices})
Let \((X,f)\) be a connectivity system.  
A family \(\mathcal{F}\subseteq 2^X\) is called a \emph{closure system of order \(k+1\)} on \((X,f)\) if:
\begin{enumerate}
    \item[(CL1)] If \(F_1,F_2\in \mathcal{F}\) and \(f(F_1\cap F_2)\leq k\), then \(F_1\cap F_2\in \mathcal{F}\).
    \item[(CL2)] \(X\in \mathcal{F}\).
\end{enumerate}
\end{definition}

\begin{proposition}
Every nonempty filter of order \(k+1\) on a connectivity system is a closure system of order \(k+1\).
\end{proposition}

\begin{proof}
Axiom (CL1) is precisely axiom (Q1).  
If \(F\) is nonempty, choose \(A\in F\). Since \(A\subseteq X\) and \(f(X)=f(\emptyset)=0\leq k\), axiom (Q2) implies \(X\in F\).  
Hence (CL2) also holds.
\end{proof}

A related notion is that of a union-closed family.

\begin{definition}[Union-Closed System on a Connectivity System]
Let \((X,f)\) be a connectivity system.  
A family \(\mathcal{F}\subseteq 2^X\) is called a \emph{union-closed system of order \(k+1\)} if:
\begin{enumerate}
    \item[(UC1)] If \(F_1,F_2\in \mathcal{F}\) and \(f(F_1\cup F_2)\leq k\), then \(F_1\cup F_2\in \mathcal{F}\).
    \item[(UC2)] \(\emptyset\in \mathcal{F}\) and \(X\in \mathcal{F}\).
\end{enumerate}
\end{definition}

Such families occur naturally in cooperative game theory, where players form coalitions subject to feasibility constraints \cite{aumann1964bargaining,peleg2007introduction,nash1953two,lozano2013cooperative,nash1950non}.  
Many related feasible-set systems have been studied, including accessible union-stable systems, antimatroids, simplicial complexes, regular set systems, \(k\)-regular set systems, building sets, crossing set systems, communication feasible set systems, coalition structures, augmenting systems, monotone set systems, S-extremal set systems, chain quasi-building systems, intersection-closed quasi-building systems, union-stable quasi-building systems, and union-stable systems \cite{algaba2013cooperative,algaba2003axiomatizations,algaba2004axiomatization,pruesse1993gray,gutin2002anti,dietrich1989matroids,martino2021cooperative,Lange2009ValuesOR,Xie2009TheCO,Koshevoy2011SolutionCF,Karzanov1996HowTT,Proefschrift2015HierarchiesCA,ZouSolutionsFC,Bilbao2003CooperativeGU,Kaplan1999OnlineCO,Mszros2010SextremalSS,Suzuki2015SolutionsFC,algaba2000position,algaba2015harsanyi}.

However, a filter on a connectivity system is, in general, \emph{not} union-closed.  
By contrast, every nonempty filter is intersection-closed in the sense of (CL1), and therefore gives rise to a closure system.

\begin{question}
If these feasible-set systems are extended to connectivity systems, do they admit natural characterizations in terms of branch-width, tangles, or ultrafilters?
\end{question}

\begin{question}
Do cooperative games defined on filters or ultrafilters exhibit structural features that are absent in the usual union-closed setting?
\end{question}

Ultrafilters also have a natural interpretation in voting theory.  
On a finite set of voters, an ultrafilter corresponds to a perfectly decisive winning rule.  
For completeness, we recall the standard notion of a simple voting game.

\begin{definition}\cite{Axenovich2010OnTS}
A \emph{simple voting game} on a finite voter set \(X\) is a family \(W\subseteq 2^X\) such that:
\begin{enumerate}
    \item \(\emptyset \notin W\),
    \item \(X\in W\),
    \item if \(A\in W\) and \(A\subseteq A'\subseteq X\), then \(A'\in W\).
\end{enumerate}
The members of \(W\) are called \emph{winning coalitions}.
\end{definition}

\begin{definition}
A simple voting game \(W\) is called \emph{proper} if for every \(A\subseteq X\),
\[
A\in W \quad \Longrightarrow \quad X\setminus A \notin W.
\]
\end{definition}

Thus, when restricted to subsets of order at most \(k\), an ultrafilter on a connectivity system behaves like a proper monotone winning-family of admissible coalitions.  
This viewpoint suggests further interactions between connectivity systems, graph-width obstructions, and voting/game-theoretic models.

Further related width parameters and associated games are discussed in Appendix E.

\subsection{Ultrafilters and Majority Systems on Connectivity Systems}

We next consider majority systems on a connectivity system.
A \emph{majority space} is, roughly speaking, a set system in which for every admissible set,
either the set itself or its complement is regarded as a majority.
Classical ultrafilters in set theory provide typical examples of such systems
\cite{salame2006majority,pacuit2006majority,pacuit2004majority}.

\begin{definition}[Majority system]
\label{def:majority-system}
Let \((X,f)\) be a connectivity system, where \(X\) is a finite set and
\(f:2^X\to \mathbb{N}\) is a symmetric submodular function.
A family \(M\subseteq 2^X\) is called a \emph{majority system of order \(k+1\)}
on \((X,f)\) if it satisfies the following conditions:
\begin{enumerate}
    \item[(MA1)] For every \(A\subseteq X\) with \(f(A)\leq k\), either
    \(A\in M\) or \(X\setminus A\in M\).
    \item[(MA2)] If \(A,B\in M\) and \(A\cap B=\emptyset\), then
    \(B=X\setminus A\).
    \item[(MA3)] If \(A\in M\), \(F\subseteq A\) is finite, \(G\subseteq X\setminus A\),
    \(|F|\leq |G|\), and
    \[
    f\bigl((A\setminus F)\cup G\bigr)\leq k,
    \]
    then
    \[
    (A\setminus F)\cup G \in M.
    \]
\end{enumerate}
The pair \(\langle X,M\rangle\) is called a \emph{weak majority space}.

A set \(A\subseteq X\) is called a \emph{strict majority} (with respect to \(M\))
if \(A\in M\) and \(X\setminus A\notin M\).
It is called a \emph{weak majority} if both \(A\in M\) and \(X\setminus A\in M\).
Every member of \(M\) is called a \emph{majority set}.
\end{definition}

The ultrafilter axioms immediately imply the decisiveness property (MA1),
while (MA2) is also compatible with ultrafilters because two disjoint members
cannot both belong to an ultrafilter.
This motivates the following conjecture.

\begin{conjecture}
\label{conj:ultrafilter-majority}
Let \((X,f)\) be a connectivity system, and let \(\mathcal{U}\) be a non-principal
ultrafilter of order \(k+1\) on \((X,f)\).
Then \(\mathcal{U}\) is a majority system of order \(k+1\) on \((X,f)\).
\end{conjecture}

The results established so far can be summarized as follows.

\begin{proposition}
\label{prop:ultrafilter-summary}
Let \(\mathcal{U}\) be a non-principal ultrafilter of order \(k+1\)
on a connectivity system \((X,f)\). Then:
\begin{enumerate}
    \item \(\mathcal{U}\) is a superfilter of order \(k+1\) on \((X,f)\).
    \item \(\mathcal{U}\) is a closure system of order \(k+1\) on \((X,f)\).
    \item \(\mathcal{U}\) is a \(\sigma\)-filter of order \(k+1\) on \((X,f)\).
    \item \(\mathcal{U}\) is a co-independence system of order \(k+1\) on \((X,f)\).
    \item \(\mathcal{U}\) is a \(\pi\)-system of order \(k+1\) on \((X,f)\).
    \item \(\mathcal{U}\) is a weak ultrafilter of order \(k+1\) on \((X,f)\).
\end{enumerate}
\end{proposition}

\begin{proof}
Items (1)--(6) follow from the definitions and the propositions established in the previous subsections.
\end{proof}

\begin{remark}
\label{rem:related-concepts-ultrafilter}
According to the cited literature, non-principal ultrafilters on connectivity systems
are also closely related to several complementary or equivalent notions,
including co-tangles, co-tangle-kits, co-loose tangles, co-loose-tangle-kits,
maximal ideals, grills, co-primals, and co-profiles
\cite{16,Oum2006CertifyingLB,93,49,FujitaGrill2024,1}.
They also arise in game-theoretic formulations, for example as robber-winning strategies
in monotone search games and open monotone search games on connectivity systems
\cite{16}.
\end{remark}


\section{Branch-Width on Connectivity Systems}

In this section, we discuss branch-width for connectivity systems.

\subsection{Branch-Width}

Branch-width is a fundamental width parameter in graph theory.
It is defined via branch decompositions, in which the leaves of a tree correspond
to the edges of the graph.
This notion extends naturally to connectivity systems through symmetric submodular functions
\cite{9,10,16,geelen2002branch}.

We begin with the classical graph-theoretic definition.

\begin{definition}\cite{282}
Let \(G\) be a graph.
A \emph{branch decomposition} of \(G\) is a pair \((T,\tau)\), where
\(T\) is a tree whose vertices all have degree \(1\) or \(3\), and
\(\tau\) is a bijection from the set \(L(T)\) of leaves of \(T\) onto the edge set \(E(G)\).

For an edge \(e\in E(T)\), the deletion of \(e\) separates \(T\) into two components.
These components induce a partition \((E_1,E_2)\) of \(E(G)\) via the bijection \(\tau\).
The \emph{order} of \(e\) is the number of vertices \(v\in V(G)\) that are incident with
at least one edge in \(E_1\) and at least one edge in \(E_2\).

The \emph{width} of \((T,\tau)\) is the maximum order of its edges, and
the \emph{branch-width} of \(G\), denoted by \(\operatorname{bw}(G)\),
is the minimum width over all branch decompositions of \(G\).

By convention, if \(|E(G)|=1\), then \(\operatorname{bw}(G)=0\).
\end{definition}

Tree-width and branch-width are closely related, but they arise from different kinds
of decompositions.
Tree-width is based on tree decompositions of the vertex set,
whereas branch-width is based on recursive partitions of the edge set.
Thus both parameters measure structural complexity, but from different viewpoints.

The corresponding notion for connectivity systems is as follows.

\begin{definition}[Branch decomposition of a connectivity system]\cite{10}
\label{def:branch-width}
Let \((X,f)\) be a connectivity system, where \(X\) is a finite set and
\(f:2^X\to \mathbb{N}\) is symmetric and submodular.

A \emph{branch decomposition} of \((X,f)\) is a pair \((T,\mu)\), where
\(T\) is a tree all of whose internal vertices have degree \(3\),
and \(\mu:L(T)\to X\) is a bijection from the leaf set \(L(T)\) of \(T\) onto \(X\).

For each edge \(e\in E(T)\), the graph \(T-e\) has exactly two components,
say \(T_1\) and \(T_2\).
Let
\[
X_1(e):=\{\mu(\ell)\mid \ell\in L(T_1)\},
\qquad
X_2(e):=\{\mu(\ell)\mid \ell\in L(T_2)\}.
\]
Then \((X_1(e),X_2(e))\) is a partition of \(X\), and by symmetry of \(f\),
\[
f\bigl(X_1(e)\bigr)=f\bigl(X_2(e)\bigr).
\]
The \emph{order} of \(e\) is defined by
\[
\operatorname{ord}_{(T,\mu)}(e):=f\bigl(X_1(e)\bigr).
\]

The \emph{width} of the branch decomposition \((T,\mu)\) is
\[
\operatorname{width}(T,\mu):=
\max_{e\in E(T)} \operatorname{ord}_{(T,\mu)}(e).
\]
The \emph{branch-width} of the connectivity system \((X,f)\), denoted by
\(\operatorname{bw}(X,f)\), is the minimum width over all branch decompositions of \((X,f)\).
\end{definition}

\begin{example}
\label{ex:branchwidth-connectivity}
Let \(X=\{a,b,c,d\}\), and define
\[
f(S):=\min\{|S|,\ |X\setminus S|\}
\qquad (S\subseteq X).
\]
Then \(f\) is a symmetric submodular function on \(X\).

Consider a tree \(T\) with four leaves \(v_a,v_b,v_c,v_d\), and let
\[
\mu(v_a)=a,\qquad \mu(v_b)=b,\qquad \mu(v_c)=c,\qquad \mu(v_d)=d.
\]
Suppose that one edge \(e\in E(T)\) separates the leaves into
\(\{v_a,v_b\}\) and \(\{v_c,v_d\}\).
Then
\[
X_1(e)=\{a,b\},
\qquad
X_2(e)=\{c,d\},
\]
and hence
\[
\operatorname{ord}_{(T,\mu)}(e)=f(\{a,b\})=\min\{2,2\}=2.
\]

For every edge of a branch decomposition on four leaves, the corresponding side has
size \(1\) or \(2\), so the order is always at most \(2\).
Moreover, some edge necessarily induces a \(2\)-\(2\) partition, so the width is at least \(2\).
Therefore
\[
\operatorname{width}(T,\mu)=2
\quad\text{and}\quad
\operatorname{bw}(X,f)=2.
\]
\end{example}

In the theory of graph and connectivity width parameters,
duality theorems connect decompositions with obstruction objects such as tangles,
filters, and ultrafilters \cite{9,10,bozyk2022objects,4,16,albrechtsen2024tangle}.
For branch-width on connectivity systems, the following duality theorem with ultrafilters is known.

\begin{theorem}\cite{16}
\label{thm:duality-ultrafilter}
Let \((X,f)\) be a connectivity system.
Then
\[
\operatorname{bw}(X,f)\leq k
\]
if and only if there does not exist a non-principal ultrafilter of order \(k+1\) on \((X,f)\).
\end{theorem}

A corresponding duality theorem with tangles is also classical.

\begin{theorem}\cite{9}
\label{thm:duality-tangle}
Let \((X,f)\) be a connectivity system.
Then
\[
\operatorname{bw}(X,f)\leq k
\]
if and only if there does not exist a tangle of order \(k+1\) on \((X,f)\).
\end{theorem}

\begin{remark}
Variants such as loose tangles and loose tangle kits are also known to enjoy
duality relationships with branch-width and to be closely related to filter-type objects
\cite{10,93}.
\end{remark}

\subsection{Linear Branch-Width}
We next introduce \emph{linear branch decompositions}, which are the linear counterparts of branch decompositions.  
Like branch-width itself, linear branch-width has been studied extensively in the literature \cite{39,51,52}.  
Restricting the decomposition tree to a linear shape often leads to useful refinements of general width parameters and their associated dual objects.

\begin{definition}[Linear branch decomposition]
\label{def:linear-decomposition}
Let \((X,f)\) be a connectivity system, where \(X\) is a finite set and
\(f:2^X\to \mathbb{N}\) is a symmetric submodular function.

A \emph{linear branch decomposition} of \((X,f)\) is a branch decomposition \((T,\mu)\) of \((X,f)\) in which the tree \(T\) is a caterpillar; that is, every vertex of \(T\) has degree \(1\) or \(3\), and the subgraph obtained from \(T\) by deleting all leaves is a path.

Equivalently, a linear branch decomposition may be described by an ordering
\[
x_1,x_2,\dots,x_n
\]
of the elements of \(X\), where \(n=|X|\).  
For each \(i=1,2,\dots,n-1\), let
\[
X_i:=\{x_1,x_2,\dots,x_i\}.
\]
The \emph{width} of this linear branch decomposition is defined by
\[
\max_{1\leq i\leq n-1} f(X_i).
\]

The \emph{linear branch-width} of \((X,f)\), denoted by
\(\operatorname{lbw}(X,f)\), is the minimum of these widths over all orderings of \(X\).
\end{definition}

\begin{remark}
The above ordering-based description is equivalent to the caterpillar-tree description.
Indeed, every caterpillar branch decomposition determines a linear ordering of the leaves from one end of the spine to the other, and conversely every linear ordering of \(X\) yields a caterpillar branch decomposition.
\end{remark}

\begin{example}
\label{ex:linear-branch-width}
Let
\[
X=\{a,b,c,d\},
\]
and define
\[
f(S):=\min\{|S|,\ |X\setminus S|\}
\qquad (S\subseteq X).
\]
Then \(f\) is a symmetric submodular function, so \((X,f)\) is a connectivity system.

Consider the ordering
\[
a,b,c,d.
\]
Then
\[
X_1=\{a\},\qquad
X_2=\{a,b\},\qquad
X_3=\{a,b,c\}.
\]
Hence
\[
f(X_1)=f(\{a\})=1,\qquad
f(X_2)=f(\{a,b\})=2,\qquad
f(X_3)=f(\{a,b,c\})=1.
\]
Therefore the width of this linear branch decomposition is
\[
\max\{1,2,1\}=2.
\]

Since every ordering of a \(4\)-element set has a middle cut of size \(2\), it follows that
\[
\operatorname{lbw}(X,f)=2.
\]
\end{example}

A single filter on a connectivity system is known to be dual to linear branch-width \cite{85}.  
Likewise, linear tangles \cite{fujita2018equivalence} and linear loose tangles \cite{3} are also known to exhibit corresponding duality phenomena.

\begin{theorem}[\cite{4}]
\label{thm:linear-branch-width}
Let \((X,f)\) be a connectivity system. Then the linear branch-width of \((X,f)\) is at most \(k\) if and only if there does not exist a non-principal single ultrafilter of order \(k+1\) on \((X,f)\).
\end{theorem}


\newpage
\chapter{Maximal Filters, Chains, and Antichains on Connectivity Systems}

In this chapter, we study several order-theoretic aspects of connectivity systems.
Since the ground set is finite, many arguments can be formulated in terms of finite partially ordered sets, and no choice principle is needed.

\section{Chains and Antichains on the \(k\)-Truncation}

Let \((X,f)\) be a connectivity system, where \(X\) is a finite set and
\(f:2^X\to \mathbb{N}\) is a symmetric submodular function.
For a fixed integer \(k\geq 0\), define
\[
\mathcal{P}_k(X,f):=\{A\subseteq X \mid f(A)\leq k\}.
\]
We regard \(\mathcal{P}_k(X,f)\) as a finite poset under inclusion.

\begin{definition}[Chain]
\label{def:chain_connectivity}
A \emph{chain of order \(k+1\)} on \((X,f)\) is a finite family
\[
\mathcal{C}=\{A_1,\dots,A_m\}\subseteq \mathcal{P}_k(X,f)
\]
such that, after relabeling if necessary,
\[
A_1 \subseteq A_2 \subseteq \cdots \subseteq A_m.
\]
Equivalently, \(\mathcal{C}\) is a chain in the poset \((\mathcal{P}_k(X,f),\subseteq)\).
\end{definition}

\begin{definition}[Antichain]
\label{def:antichain_connectivity}
An \emph{antichain of order \(k+1\)} on \((X,f)\) is a finite family
\[
\mathcal{A}=\{A_1,\dots,A_m\}\subseteq \mathcal{P}_k(X,f)
\]
such that for all distinct \(i,j\),
\[
A_i \nsubseteq A_j
\qquad\text{and}\qquad
A_j \nsubseteq A_i.
\]
Equivalently, \(\mathcal{A}\) is an antichain in the poset \((\mathcal{P}_k(X,f),\subseteq)\).
\end{definition}

Since \(\mathcal{P}_k(X,f)\) is a finite poset, standard order-theoretic results apply directly.

\begin{theorem}[Dilworth's theorem on \(\mathcal{P}_k(X,f)\)]
\label{thm:dilworth_connectivity}
Let \((X,f)\) be a connectivity system, and let
\[
\mathcal{A}\subseteq \mathcal{P}_k(X,f).
\]
Then the maximum size of an antichain contained in \(\mathcal{A}\) is equal to the minimum number of chains needed to cover \(\mathcal{A}\).
\end{theorem}

\begin{proof}
This is an immediate application of the classical Dilworth theorem to the finite poset \((\mathcal{A},\subseteq)\).
\end{proof}

\section{A Finite Tukey-Type Observation for Filters}

We now turn to filters on a connectivity system.
Recall that a filter of order \(k+1\) on \((X,f)\) is a family
\(F\subseteq 2^X\) satisfying axioms \((Q0)\)--\((Q3)\), and an ultrafilter is a filter satisfying the additional decisiveness axiom \((Q4)\).

\begin{proposition}[Existence of maximal extensions]
\label{prop:maximal-filter-exists}
Let \(F_0\) be a filter of order \(k+1\) on a connectivity system \((X,f)\).
Consider the family
\[
\mathfrak{F}(F_0):=
\{F \subseteq 2^X \mid F \text{ is a filter of order } k+1 \text{ and } F_0\subseteq F\},
\]
partially ordered by inclusion.
Then \(\mathfrak{F}(F_0)\) has a maximal element.
\end{proposition}

\begin{proof}
Since \(X\) is finite, the power set \(2^X\) is finite, and therefore the set of all families \(F\subseteq 2^X\) is finite as well.
Hence \(\mathfrak{F}(F_0)\) is a finite partially ordered set.
Every finite partially ordered set has a maximal element.
\end{proof}

\begin{remark}
Proposition \ref{prop:maximal-filter-exists} is the finite analogue of the usual Tukey/Zorn type maximality argument.
In the present finite setting, no appeal to Tukey's Lemma or Zorn's Lemma is necessary.
\end{remark}

The following implication is always valid.

\begin{proposition}
\label{prop:ultrafilter-maximal-filter}
Every ultrafilter of order \(k+1\) on a connectivity system is maximal among filters of order \(k+1\) under inclusion.
\end{proposition}

\begin{proof}
Let \(U\) be an ultrafilter of order \(k+1\), and suppose that \(F\) is a filter of order \(k+1\) with
\[
U \subsetneq F.
\]
Choose \(A\in F\setminus U\).
By axiom \((Q0)\) for \(F\), we have \(f(A)\leq k\).
Since \(U\) is an ultrafilter and \(A\notin U\), axiom \((Q4)\) implies that
\[
X\setminus A \in U \subseteq F.
\]
Now
\[
A\cap (X\setminus A)=\emptyset,
\]
and
\[
f(\emptyset)=0\leq k.
\]
Hence axiom \((Q1)\) for \(F\) yields \(\emptyset\in F\), contradicting axiom \((Q3)\).
Therefore no proper filter extension of \(U\) exists, and \(U\) is maximal.
\end{proof}

\begin{remark}
The converse implication, namely that every maximal filter is an ultrafilter, is not automatic from the definitions and should not be assumed without a separate proof.
For this reason it is better to distinguish carefully between \emph{maximal filters} and \emph{ultrafilters}.
\end{remark}

\section{Interaction between Filters and Chains}

We next record a basic monotonicity property.

\begin{lemma}
\label{lem:X-in-filter}
Let \(F\) be a nonempty filter of order \(k+1\) on \((X,f)\).
Then \(X\in F\).
\end{lemma}

\begin{proof}
Choose \(A\in F\).
Since \(A\subseteq X\) and
\[
f(X)=f(\emptyset)=0\leq k,
\]
axiom \((Q2)\) implies that \(X\in F\).
\end{proof}

This yields the following simple description of how a filter meets a chain.

\begin{proposition}
\label{prop:filter-meets-chain}
Let \(F\) be a nonempty filter of order \(k+1\) on \((X,f)\), and let
\[
A_1 \subseteq A_2 \subseteq \cdots \subseteq A_m
\]
be a chain in \(\mathcal{P}_k(X,f)\).
Then the set of indices
\[
I_F:=\{i\in \{1,\dots,m\}\mid A_i\in F\}
\]
is either empty or a final interval of \(\{1,\dots,m\}\).
In other words, if \(A_i\in F\) and \(i\leq j\leq m\), then \(A_j\in F\).
\end{proposition}

\begin{proof}
If \(A_i\in F\) and \(A_i\subseteq A_j\), then \(f(A_j)\leq k\) because \(A_j\in\mathcal{P}_k(X,f)\).
Hence axiom \((Q2)\) implies \(A_j\in F\).
\end{proof}

\begin{corollary}
\label{cor:ultrafilter-chain-final-segment}
Let \(U\) be an ultrafilter of order \(k+1\) on \((X,f)\), and let
\[
A_1 \subseteq A_2 \subseteq \cdots \subseteq A_m = X
\]
be a chain in \(\mathcal{P}_k(X,f)\).
Then there exists an index \(j\in\{1,\dots,m\}\) such that
\[
U\cap \{A_1,\dots,A_m\}=\{A_j,A_{j+1},\dots,A_m\}.
\]
\end{corollary}

\begin{proof}
By Lemma \ref{lem:X-in-filter}, \(X\in U\), so \(I_U\neq \emptyset\).
Now apply Proposition \ref{prop:filter-meets-chain}.
\end{proof}

\begin{remark}
An ultrafilter does \emph{not} in general contain exactly one set from a chain.
Rather, along a chain ending at \(X\), its intersection with the chain is typically a final segment, as described in Corollary \ref{cor:ultrafilter-chain-final-segment}.
\end{remark}

\section{Sequence Chains and Linear Branch-Width}

The sequence concept appearing naturally in the linear theory is the following.

\begin{definition}[Sequence chain]
\label{def:sequence_connectivity}
Let \((X,f)\) be a connectivity system, and write \(n:=|X|\).
A \emph{sequence chain of order \(k+1\)} on \((X,f)\) is a sequence
\[
\emptyset=A_0 \subsetneq A_1 \subsetneq \cdots \subsetneq A_n = X
\]
such that
\[
|A_i|=i
\qquad\text{and}\qquad
f(A_i)\leq k
\quad\text{for all } i=0,1,\dots,n.
\]
Equivalently, there exists an ordering
\[
x_1,x_2,\dots,x_n
\]
of the elements of \(X\) such that
\[
A_i=\{x_1,\dots,x_i\}
\qquad (0\leq i\leq n).
\]
\end{definition}

This notion is exactly the ordering formulation of linear branch-width.

\begin{theorem}
\label{thm:sequence-linear-bw}
Let \((X,f)\) be a connectivity system.
Then the following are equivalent:
\begin{enumerate}
    \item \(\operatorname{lbw}(X,f)\leq k\).
    \item There exists a sequence chain of order \(k+1\) on \((X,f)\).
\end{enumerate}
\end{theorem}

\begin{proof}
By the ordering form of a linear branch decomposition, \(\operatorname{lbw}(X,f)\leq k\) means that there exists an ordering
\[
x_1,\dots,x_n
\]
of \(X\) such that
\[
f(\{x_1,\dots,x_i\})\leq k
\qquad\text{for all } i=0,\dots,n.
\]
Defining
\[
A_i:=\{x_1,\dots,x_i\},
\]
we obtain a sequence chain of order \(k+1\).

Conversely, if
\[
\emptyset=A_0 \subsetneq A_1 \subsetneq \cdots \subsetneq A_n=X
\]
is a sequence chain with \(|A_i|=i\), then for each \(i\) there is a unique element
\[
x_i \in A_i\setminus A_{i-1}.
\]
Thus
\[
A_i=\{x_1,\dots,x_i\},
\]
and the ordering \(x_1,\dots,x_n\) defines a linear branch decomposition of width at most \(k\).
Hence \(\operatorname{lbw}(X,f)\leq k\).
\end{proof}

Combining this with the duality theorem for linear branch-width yields the following consequence.

\begin{corollary}
\label{cor:sequence-no-single-ultrafilter}
Let \((X,f)\) be a connectivity system.
If there exists a sequence chain of order \(k+1\) on \((X,f)\), then there does not exist a non-principal single ultrafilter of order \(k+1\) on \((X,f)\).
\end{corollary}

\begin{proof}
By Theorem \ref{thm:sequence-linear-bw}, the existence of a sequence chain of order \(k+1\) implies that
\[
\operatorname{lbw}(X,f)\leq k.
\]
Now apply the linear branch-width duality theorem.
\end{proof}

\begin{remark}
The existence of a sequence chain does \emph{not} by itself rule out the existence of antichains in \(\mathcal{P}_k(X,f)\).
Chains and antichains may coexist in the same finite poset.
Thus any statement asserting that a sequence chain alone forbids all antichains is too strong in general.
\end{remark}

The main order-theoretic conclusions obtained in this chapter are as follows:
\begin{itemize}
    \item Chains and antichains of order \(k+1\) are naturally studied in the finite poset \((\mathcal{P}_k(X,f),\subseteq)\).
    \item Dilworth's theorem applies directly to this poset.
    \item Every ultrafilter is maximal among filters, although the converse is not automatic.
    \item The intersection of a filter with a chain is always a final segment.
    \item Sequence chains are precisely the ordering-theoretic form of linear branch decompositions.
\end{itemize}
\section{Other Chain Concepts on Connectivity Systems}

\subsection{Separation Chains on a Connectivity System}

In graph theory, separation chains are frequently used in the study of width parameters such as path-width and cut-width \cite{fomin2019width,pilipczuk2012computing,kitsunai2015pathwidth}.  
For connectivity systems, the corresponding notion can be formulated in terms of nested bipartitions of the ground set.

\begin{definition}[Separation chain]
\label{def:separation_chain_connectivity}
Let \((X,f)\) be a connectivity system, where \(X\) is a finite set and
\(f:2^X\to \mathbb{N}\) is a symmetric submodular function.
A \emph{separation chain of order \(k+1\)} on \((X,f)\) is a finite sequence
\[
(A_1,B_1),\,(A_2,B_2),\,\dots,\,(A_m,B_m)
\]
such that, for each \(i\),
\[
A_i \cup B_i = X,
\qquad
A_i \cap B_i = \emptyset,
\qquad
B_i = X\setminus A_i,
\]
and the following conditions hold:
\begin{enumerate}
    \item
    \[
    A_1 \subseteq A_2 \subseteq \cdots \subseteq A_m
    \qquad\text{equivalently}\qquad
    B_1 \supseteq B_2 \supseteq \cdots \supseteq B_m.
    \]
    \item For every \(i\),
    \[
    f(A_i)=f(B_i)\leq k.
    \]
\end{enumerate}
The \emph{width} of the separation chain is defined by
\[
\max_{1\leq i\leq m} f(A_i).
\]
\end{definition}

\begin{definition}[Separation sequence chain]
\label{def:separation_sequence_chain_connectivity}
Let \((X,f)\) be a connectivity system.
A \emph{separation sequence chain of order \(k+1\)} on \((X,f)\) is a separation chain
\[
(A_0,B_0),\,(A_1,B_1),\,\dots,\,(A_m,B_m)
\]
such that
\[
A_0=\emptyset,\quad B_0=X,\quad A_m=X,\quad B_m=\emptyset,
\]
and
\[
A_0 \subseteq A_1 \subseteq \cdots \subseteq A_m.
\]
Its width is defined by
\[
\max_{0\leq i\leq m} f(A_i).
\]
\end{definition}

\begin{remark}
Under the correspondence
\[
A_i \longleftrightarrow (A_i,X\setminus A_i),
\]
chains and separation chains are equivalent descriptions of the same underlying nested structure.
Likewise, sequence chains and separation sequence chains are equivalent.
\end{remark}

\subsection{Sperner Systems and Traces on a Connectivity System}

Concepts closely related to chains and antichains include Sperner systems and traces.  
These notions are naturally formulated on the truncated set system
\[
\mathcal{P}_k(X,f):=\{A\subseteq X \mid f(A)\leq k\}.
\]

\begin{definition}[Sperner system of order \(k+1\)]
Let \((X,f)\) be a connectivity system.
A family \(\mathcal{F}\subseteq \mathcal{P}_k(X,f)\) is called a \emph{Sperner system of order \(k+1\)} if
\[
A,B\in \mathcal{F}\ \text{and}\ A\subsetneq B
\quad\Longrightarrow\quad \text{impossible}.
\]
Equivalently, \(\mathcal{F}\) is an antichain in the poset \((\mathcal{P}_k(X,f),\subseteq)\).
\end{definition}

\begin{definition}[\(l\)-chain of order \(k+1\)]
Let \((X,f)\) be a connectivity system.
An \emph{\(l\)-chain of order \(k+1\)} is a family
\[
A_1 \subsetneq A_2 \subsetneq \cdots \subsetneq A_l
\]
with each \(A_i\in \mathcal{P}_k(X,f)\).
\end{definition}

\begin{definition}[\(l\)-Sperner system of order \(k+1\)]
Let \((X,f)\) be a connectivity system.
A family \(\mathcal{F}\subseteq \mathcal{P}_k(X,f)\) is called an \emph{\(l\)-Sperner system of order \(k+1\)} if it contains no \(l\)-chain of order \(k+1\).
It is called \emph{saturated} if, for every
\[
S\in \mathcal{P}_k(X,f)\setminus \mathcal{F},
\]
the enlarged family \(\mathcal{F}\cup\{S\}\) contains an \(l\)-chain of order \(k+1\).
\end{definition}

\begin{definition}[Trace on a connectivity system]
\label{def:trace_connectivity}
Let \((X,f)\) be a connectivity system, let \(\mathcal{F}\subseteq 2^X\), and let \(Y\subseteq X\).
The \emph{trace} (or \emph{restriction}) of \(\mathcal{F}\) on \(Y\) of order \(k+1\) is
\[
\mathcal{F}\!\restriction_Y
:=
\{F\cap Y \mid F\in \mathcal{F},\ f(F\cap Y)\leq k\}.
\]
Thus \(\mathcal{F}\!\restriction_Y\) is a family of subsets of \(Y\) obtained by intersecting \(Y\) with members of \(\mathcal{F}\) and retaining only those intersections whose \(f\)-value is at most \(k\).
\end{definition}

\begin{definition}[Strong trace]
\label{def:strong_trace_connectivity}
Let \((X,f)\) be a connectivity system, let \(\mathcal{F}\subseteq 2^X\), and let \(Y\subseteq X\).
We say that \(\mathcal{F}\) \emph{strongly traces} \(Y\) of order \(k+1\) if there exists a set
\[
B\subseteq X\setminus Y
\]
such that for every \(Z\subseteq Y\),
\[
f(B\cup Z)\leq k
\quad\Longrightarrow\quad
B\cup Z \in \mathcal{F}.
\]
Such a set \(B\) is called a \emph{support} of \(Y\) in \(\mathcal{F}\), and the set of all supports is denoted by \(S_{\mathcal{F}}(Y)\).
\end{definition}

\begin{remark}
A clutter is a family of subsets in which no member contains another.
Hence, in the present setting, a clutter contained in \(\mathcal{P}_k(X,f)\) is precisely a Sperner system of order \(k+1\).
\end{remark}

\subsection{Chain Decompositions on a Connectivity System}

In the theory of partially ordered sets, chain decompositions, antichain decompositions, and symmetric chain decompositions are classical notions.
The corresponding concepts for connectivity systems are obtained by working inside the poset \((\mathcal{P}_k(X,f),\subseteq)\).

\begin{definition}[Chain decomposition]
Let \((X,f)\) be a connectivity system, and let \(\mathcal{G}\subseteq \mathcal{P}_k(X,f)\).
A \emph{chain decomposition} of \(\mathcal{G}\) is a partition of \(\mathcal{G}\) into pairwise disjoint chains.
\end{definition}

\begin{definition}[Symmetric chain]
Let \((X,f)\) be a connectivity system.
A chain
\[
A_1 \subsetneq A_2 \subsetneq \cdots \subsetneq A_m
\]
in \(\mathcal{P}_k(X,f)\) is called \emph{symmetric} if
\[
|A_{i+1}| = |A_i|+1
\qquad (1\leq i < m)
\]
and
\[
|A_1| + |A_m| = |X|.
\]
\end{definition}

\begin{definition}[Symmetric chain decomposition]
Let \((X,f)\) be a connectivity system, and let \(\mathcal{G}\subseteq \mathcal{P}_k(X,f)\).
A \emph{symmetric chain decomposition} of \(\mathcal{G}\) is a partition of \(\mathcal{G}\) into pairwise disjoint symmetric chains.
\end{definition}

\begin{definition}[Antichain decomposition]
Let \((X,f)\) be a connectivity system, and let \(\mathcal{G}\subseteq \mathcal{P}_k(X,f)\).
An \emph{antichain decomposition} of \(\mathcal{G}\) is a partition of \(\mathcal{G}\) into pairwise disjoint antichains.
\end{definition}

\begin{question}
How do symmetric chain decompositions of \(\mathcal{P}_k(X,f)\) reflect the structure of the underlying connectivity function \(f\)?
\end{question}

\subsection{Operations and Single-Element Chains on a Connectivity System}

We next record some elementary operations on chains, together with the notion of a single-element chain.

\begin{definition}[Extension and deletion of a chain]
\label{def:operation_chain_extension}
Let
\[
A_1 \subseteq A_2 \subseteq \cdots \subseteq A_m
\]
be a chain of order \(k+1\) in \((X,f)\).

\begin{enumerate}
    \item An \emph{extension} of the chain is a chain obtained by adjoining a set \(A_{m+1}\) such that
    \[
    A_m \subseteq A_{m+1}\subseteq X
    \qquad\text{and}\qquad
    f(A_{m+1})\leq k.
    \]
    \item A \emph{deletion} from the chain is the removal of one member \(A_i\), provided that the remaining family is still a chain in \(\mathcal{P}_k(X,f)\).
\end{enumerate}
\end{definition}

\begin{definition}[Single-element chain]
\label{def:single_element_chain}
A \emph{single-element chain of order \(k+1\)} on \((X,f)\) is a chain
\[
A_1 \subsetneq A_2 \subsetneq \cdots \subsetneq A_m
\]
such that
\[
f(A_i)\leq k
\qquad\text{for all } i,
\]
and
\[
|A_i\setminus A_{i-1}|=1
\qquad (2\leq i\leq m).
\]
Equivalently, consecutive members differ by exactly one newly added element.
\end{definition}

\begin{definition}[Single-element sequence chain]
\label{def:single_element_sequence}
A \emph{single-element sequence chain of order \(k+1\)} on \((X,f)\) is a sequence
\[
\emptyset=A_0 \subsetneq A_1 \subsetneq \cdots \subsetneq A_m=X
\]
such that
\[
f(A_i)\leq k
\qquad (0\leq i\leq m),
\]
and
\[
|A_i\setminus A_{i-1}|=1
\qquad (1\leq i\leq m).
\]
\end{definition}

\begin{remark}
A single-element sequence chain is precisely an ordering
\[
x_1,x_2,\dots,x_{|X|}
\]
of the elements of \(X\), together with the initial segments
\[
A_i=\{x_1,\dots,x_i\},
\]
subject to the condition \(f(A_i)\leq k\) for all \(i\).
Hence this notion is the ordering-theoretic form of linear branch-width.
\end{remark}

\begin{definition}[Single-element extension]
Let
\[
A_1 \subsetneq A_2 \subsetneq \cdots \subsetneq A_m
\]
be a single-element chain of order \(k+1\).
A \emph{single-element extension} is obtained by adjoining a set
\[
A_{m+1}=A_m\cup \{e\},
\qquad e\in X\setminus A_m,
\]
such that
\[
f(A_{m+1})\leq k.
\]
\end{definition}

\begin{remark}
For arbitrary connectivity systems, there is no single universally accepted notion of \emph{single-element coextension} analogous to the graph- or matroid-theoretic one.
Accordingly, extension- and coextension-type operations should be introduced with care, depending on the ambient category under consideration.
\end{remark}


\newpage
\chapter{Prefilters and Filter Subbases on a Connectivity System}

In this chapter, we study prefilters and filter subbases on a connectivity system \((X,f)\).
These notions are closely related to filters and ultrafilters, and they play an important role in topology, set theory, and order theory.
Here we adapt them to the setting of connectivity systems by imposing the admissibility condition \(f(A)\leq k\).

\section{Prefilters on a Connectivity System}

We begin with prefilters and ultra-prefilters.

\begin{definition}[Prefilter]
\label{def:prefilter}
Let \((X,f)\) be a connectivity system, where \(X\) is a finite set and
\(f:2^X\to \mathbb{N}\) is a symmetric submodular function.
A family \(P\subseteq 2^X\) is called a \emph{prefilter of order \(k+1\)} on \((X,f)\) if:
\begin{enumerate}
    \item[(P1)] \(P\neq \emptyset\) and \(\emptyset\notin P\);
    \item[(P2)] for every \(A\in P\), one has \(f(A)\leq k\);
    \item[(P3)] for all \(B,C\in P\), there exists \(A\in P\) such that
    \[
    A\subseteq B\cap C.
    \]
\end{enumerate}
Thus a prefilter is a nonempty proper family of \(k\)-admissible sets that is downward directed under inclusion.
\end{definition}

\begin{definition}[Ultra-prefilter]
\label{def:ultraprefilter}
Let \((X,f)\) be a connectivity system.
A prefilter \(P\) of order \(k+1\) is called an \emph{ultra-prefilter of order \(k+1\)} if it satisfies the following additional condition:
\begin{enumerate}
    \item[(P4)] For every \(A\subseteq X\) with \(f(A)\leq k\), there exists \(B\in P\) such that
    \[
    B\subseteq A
    \qquad\text{or}\qquad
    B\subseteq X\setminus A.
    \]
\end{enumerate}
\end{definition}

The following observation is immediate.

\begin{proposition}
\label{thm:filter_prefilter}
Every filter of order \(k+1\) on a connectivity system \((X,f)\) is a prefilter of order \(k+1\).
\end{proposition}

\begin{proof}
Let \(Q\) be a filter of order \(k+1\) on \((X,f)\).

By axiom \((Q3)\), we have \(\emptyset\notin Q\), and since a filter is assumed to be nonempty, condition (P1) holds.
Condition (P2) is exactly axiom \((Q0)\).

For (P3), let \(B,C\in Q\).
If \(f(B\cap C)\leq k\), then \(B\cap C\in Q\) by axiom \((Q1)\), so we may take \(A:=B\cap C\).

Thus \(Q\) is a prefilter of order \(k+1\).
\end{proof}

Likewise, every ultrafilter induces an ultra-prefilter.

\begin{proposition}
\label{thm:ultrafilter_ultraprefilter}
Every ultrafilter of order \(k+1\) on a connectivity system \((X,f)\) is an ultra-prefilter of order \(k+1\).
\end{proposition}

\begin{proof}
Let \(Q\) be an ultrafilter of order \(k+1\) on \((X,f)\).
By Proposition \ref{thm:filter_prefilter}, \(Q\) is already a prefilter.

Let \(A\subseteq X\) satisfy \(f(A)\leq k\).
By axiom \((Q4)\), either \(A\in Q\) or \(X\setminus A\in Q\).
In the first case, choose \(B:=A\); in the second case, choose \(B:=X\setminus A\).
Then \(B\in Q\) and \(B\subseteq A\) or \(B\subseteq X\setminus A\), as required.

Hence \(Q\) is an ultra-prefilter.
\end{proof}

\section{Filter Subbases on a Connectivity System}

We next introduce filter subbases adapted to the connectivity-system setting.

\begin{definition}[Filter subbase]
\label{def:filter_subbase}
Let \((X,f)\) be a connectivity system.
A family \(S\subseteq 2^X\) is called a \emph{filter subbase of order \(k+1\)} on \((X,f)\) if:
\begin{enumerate}
    \item[(SB1)] \(S\neq \emptyset\);
    \item[(SB2)] \(\emptyset\notin S\);
    \item[(SB3)] for every \(A\in S\), one has \(f(A)\leq k\);
    \item[(SB4)] for every finite subfamily \(A_1,\dots,A_n\in S\), one has
    \[
    \bigcap_{i=1}^n A_i \neq \emptyset
    \qquad\text{and}\qquad
    f\!\left(\bigcap_{i=1}^n A_i\right)\leq k.
    \]
\end{enumerate}
\end{definition}

The family of finite intersections of members of a filter subbase forms a prefilter.

\begin{definition}
\label{def:prefilter_generated_by_subbase}
Let \(S\) be a filter subbase of order \(k+1\) on \((X,f)\).
Define
\[
\operatorname{Pf}(S)
:=
\left\{
A\subseteq X \,\middle|\,
\exists n\geq 1,\ \exists A_1,\dots,A_n\in S
\text{ such that }
A=\bigcap_{i=1}^n A_i
\right\}.
\]
We call \(\operatorname{Pf}(S)\) the \emph{prefilter generated by \(S\)}.
\end{definition}

\begin{proposition}
\label{thm:prefilter_generated_by_subbase}
Let \(S\) be a filter subbase of order \(k+1\) on a connectivity system \((X,f)\).
Then \(\operatorname{Pf}(S)\) is a prefilter of order \(k+1\).
\end{proposition}

\begin{proof}
By definition, \(S\neq \emptyset\), so \(\operatorname{Pf}(S)\neq \emptyset\).
Moreover, by (SB4), every finite intersection of members of \(S\) is nonempty and has \(f\)-value at most \(k\).
Hence \(\emptyset\notin \operatorname{Pf}(S)\), and every member of \(\operatorname{Pf}(S)\) is \(k\)-admissible.
Thus (P1) and (P2) hold.

To prove (P3), let
\[
B=\bigcap_{i=1}^m B_i
\qquad\text{and}\qquad
C=\bigcap_{j=1}^n C_j
\]
be two members of \(\operatorname{Pf}(S)\), where all \(B_i,C_j\in S\).
Then
\[
B\cap C=\bigcap_{i=1}^m B_i \cap \bigcap_{j=1}^n C_j
\]
is again a finite intersection of members of \(S\), so \(B\cap C\in \operatorname{Pf}(S)\).
Taking \(A:=B\cap C\), we obtain \(A\subseteq B\cap C\), which proves (P3).

Therefore \(\operatorname{Pf}(S)\) is a prefilter of order \(k+1\).
\end{proof}

From a prefilter one obtains a filter by upward closure inside the \(k\)-admissible part.

\begin{definition}
\label{def:filter_generated_by_subbase}
Let \(S\) be a filter subbase of order \(k+1\) on \((X,f)\).
Define
\[
\operatorname{Fil}(S)
:=
\left\{
B\subseteq X \,\middle|\,
\exists A\in \operatorname{Pf}(S)\text{ such that }A\subseteq B
\text{ and } f(B)\leq k
\right\}.
\]
We call \(\operatorname{Fil}(S)\) the \emph{filter generated by \(S\)}.
\end{definition}

\begin{theorem}
\label{thm:filter_generated_by_subbase}
Let \(S\) be a filter subbase of order \(k+1\) on a connectivity system \((X,f)\).
Then \(\operatorname{Fil}(S)\) is a filter of order \(k+1\).
\end{theorem}

\begin{proof}
We verify axioms \((Q0)\)--\((Q3)\).

\emph{(Q0)} holds by definition of \(\operatorname{Fil}(S)\).

To show nonemptiness, choose \(A\in \operatorname{Pf}(S)\).
Then \(A\subseteq A\) and \(f(A)\leq k\), so \(A\in \operatorname{Fil}(S)\).
Thus \(\operatorname{Fil}(S)\neq \emptyset\).

\emph{(Q3)}:
Since every \(A\in \operatorname{Pf}(S)\) is nonempty by (SB4), the empty set cannot contain any member of \(\operatorname{Pf}(S)\).
Hence \(\emptyset\notin \operatorname{Fil}(S)\).

\emph{(Q2)}:
Let \(B\in \operatorname{Fil}(S)\), and suppose \(B\subseteq C\subseteq X\) with \(f(C)\leq k\).
Choose \(A\in \operatorname{Pf}(S)\) such that \(A\subseteq B\).
Then \(A\subseteq C\), so \(C\in \operatorname{Fil}(S)\).

\emph{(Q1)}:
Let \(B,C\in \operatorname{Fil}(S)\), and assume \(f(B\cap C)\leq k\).
Choose \(A_B,A_C\in \operatorname{Pf}(S)\) such that
\[
A_B\subseteq B,\qquad A_C\subseteq C.
\]
Since \(\operatorname{Pf}(S)\) is a prefilter by Proposition \ref{thm:prefilter_generated_by_subbase},
there exists \(A\in \operatorname{Pf}(S)\) such that
\[
A\subseteq A_B\cap A_C \subseteq B\cap C.
\]
Because \(f(B\cap C)\leq k\), it follows that \(B\cap C\in \operatorname{Fil}(S)\).

Therefore \(\operatorname{Fil}(S)\) is a filter of order \(k+1\).
\end{proof}

\section{Ultrafilter-Oriented Subbases}

A subbase does not, in general, determine an ultrafilter uniquely.
However, one can impose an orientation condition ensuring that the generated prefilter behaves like an ultra-prefilter.

\begin{definition}[Ultrafilter-oriented subbase]
\label{def:ultrafilter_subbase}
Let \((X,f)\) be a connectivity system.
A filter subbase \(S\) of order \(k+1\) is called an \emph{ultrafilter-oriented subbase of order \(k+1\)} if, in addition,
\begin{enumerate}
    \item[(USB)] for every \(A\subseteq X\) with \(f(A)\leq k\), there exists \(B\in S\) such that
    \[
    B\subseteq A
    \qquad\text{or}\qquad
    B\subseteq X\setminus A.
    \]
\end{enumerate}
\end{definition}

\begin{theorem}
\label{thm:ultra_prefilter_generated_by_subbase}
Let \(S\) be an ultrafilter-oriented subbase of order \(k+1\) on \((X,f)\).
Then \(\operatorname{Pf}(S)\) is an ultra-prefilter of order \(k+1\).
\end{theorem}

\begin{proof}
By Proposition \ref{thm:prefilter_generated_by_subbase}, \(\operatorname{Pf}(S)\) is a prefilter.

Let \(A\subseteq X\) satisfy \(f(A)\leq k\).
By (USB), there exists \(B\in S\) such that
\[
B\subseteq A
\qquad\text{or}\qquad
B\subseteq X\setminus A.
\]
Since \(B\in S\subseteq \operatorname{Pf}(S)\), condition (P4) holds for \(\operatorname{Pf}(S)\).
Hence \(\operatorname{Pf}(S)\) is an ultra-prefilter.
\end{proof}

\begin{corollary}
\label{cor:subbase_extends_to_ultrafilter}
Let \(S\) be an ultrafilter-oriented subbase of order \(k+1\) on \((X,f)\).
Then the filter \(\operatorname{Fil}(S)\) extends to a maximal filter of order \(k+1\).
If a maximal-extension theorem identifying such maximal filters with ultrafilters is available in the surrounding framework, then \(\operatorname{Fil}(S)\) extends to an ultrafilter of order \(k+1\).
\end{corollary}

\begin{proof}
Apply the finite maximal-extension argument to the filter \(\operatorname{Fil}(S)\).
\end{proof}

\section{Ultrafilter Base Number}

In classical set theory, several cardinal characteristics measure the size of objects generating ultrafilters or related structures.
For connectivity systems, the most natural finite analogue is the minimum size of an ultra-prefilter base generating a non-principal ultrafilter.

\begin{definition}[Ultrafilter base number on a connectivity system]
Let \((X,f)\) be a finite connectivity system.
The \emph{ultrafilter base number} of \((X,f)\), denoted by
\[
u(X,f),
\]
is defined by
\[
u(X,f)
:=
\min
\left\{
|\mathcal{B}|
\;\middle|\;
\mathcal{B}\subseteq 2^X
\text{ is an ultra-prefilter and }
\operatorname{Fil}(\mathcal{B})
\text{ is a non-principal ultrafilter on }(X,f)
\right\},
\]
provided that a non-principal ultrafilter exists.
\end{definition}

\begin{question}
How does the invariant \(u(X,f)\) relate to the order parameter \(k\), to branch-width, and to the existence of non-principal ultrafilters on \((X,f)\)?
\end{question}

\newpage

\chapter{Ultrafilters on Infinite and Countable Connectivity Systems
(Exploratory Chapter) }

\section{Infinite Connectivity Systems}
So far, we have considered ultrafilters on finite connectivity systems.
We now turn to the infinite case.

\begin{definition}[Infinite connectivity system]
\label{def:infinite_connectivity_system}
Let \(X\) be an infinite set, and let
\[
f:2^X \to \mathbb{N}\cup\{\infty\}
\]
be a set function.
We call \((X,f)\) an \emph{infinite connectivity system} if \(f\) satisfies:
\begin{enumerate}
    \item \textbf{Symmetry:}
    \[
    f(A)=f(X\setminus A)
    \qquad\text{for all } A\subseteq X.
    \]
    \item \textbf{Submodularity:}
    \[
    f(A)+f(B)\geq f(A\cup B)+f(A\cap B)
    \qquad\text{for all } A,B\subseteq X.
    \]
\end{enumerate}
\end{definition}

For several infinite arguments, it is convenient to impose an additional closure property.

\begin{definition}[\(k\)-union-closed on chains]
\label{def:k-limit-closed}
Let \((X,f)\) be an infinite connectivity system, and let \(k\in \mathbb{N}\).
We say that \((X,f)\) is \emph{\(k\)-union-closed on chains} if for every increasing chain
\[
A_0 \subseteq A_1 \subseteq A_2 \subseteq \cdots
\]
of subsets of \(X\) satisfying \(f(A_i)\leq k\) for all \(i\), one has
\[
f\!\left(\bigcup_{i=0}^{\infty} A_i\right)\leq k.
\]
\end{definition}

\begin{remark}
In concrete examples arising from infinite matroids or graph-theoretic connectivity functions, additional continuity properties are often available.
When needed, such properties should be stated explicitly rather than built into the basic definition.
\end{remark}

\begin{definition}[Filters and ultrafilters on an infinite connectivity system]
\label{def:ultrafilter_infinite_connectivity_system}
Let \((X,f)\) be an infinite connectivity system, and let \(k\geq 0\).

A family \(F\subseteq 2^X\) is called a \emph{filter of order \(k+1\)} on \((X,f)\) if:
\begin{enumerate}
    \item[(Q0)] for every \(A\in F\), one has \(f(A)\leq k\);
    \item[(Q1)] if \(A,B\in F\) and \(f(A\cap B)\leq k\), then \(A\cap B\in F\);
    \item[(Q2)] if \(A\in F\), \(A\subseteq B\subseteq X\), and \(f(B)\leq k\), then \(B\in F\);
    \item[(Q3)] \(\emptyset\notin F\).
\end{enumerate}

An \emph{ultrafilter of order \(k+1\)} on \((X,f)\) is a filter of order \(k+1\) satisfying:
\begin{enumerate}
    \item[(Q4)] for every \(A\subseteq X\) with \(f(A)\leq k\), either \(A\in F\) or \(X\setminus A\in F\).
\end{enumerate}
\end{definition}

\begin{definition}[Principal and non-principal]
\label{def:principal_nonprincipal_infinite}
Let \(F\) be a filter of order \(k+1\) on an infinite connectivity system \((X,f)\).
We say that \(F\) is:
\begin{enumerate}
    \item \emph{principal} if
    \[
    \{x\}\in F
    \qquad\text{for some } x\in X \text{ with } f(\{x\})\leq k;
    \]
    \item \emph{non-principal} if
    \[
    \{x\}\notin F
    \qquad\text{for every } x\in X.
    \]
\end{enumerate}
\end{definition}

\begin{remark}
The formulation above is the direct infinite analogue of the finite definitions used earlier.
In the infinite setting, existence theorems for ultrafilters generally require a separate maximality argument, typically via Zorn's lemma or an equivalent principle.
\end{remark}

As in the finite case, one may pass from higher order to lower order by restriction.

\begin{proposition}
\label{prop:infinite_ultrafilter_restriction}
Let \(F\) be an ultrafilter of order \(k+1\) on an infinite connectivity system \((X,f)\), and let \(0\leq j\leq k\).
Define
\[
F^{\leq j}:=\{A\in F \mid f(A)\leq j\}.
\]
Then \(F^{\leq j}\) is an ultrafilter of order \(j+1\) on \((X,f)\).
\end{proposition}

\begin{proof}
The proof is identical to the finite case.
Axioms \((Q0)\)--\((Q3)\) are inherited by restriction, and \((Q4)\) follows because symmetry gives
\[
f(X\setminus A)=f(A)\leq j
\]
whenever \(f(A)\leq j\).
\end{proof}

\begin{definition}[Fr\'echet-type filter]
\label{def:frechet_filter_infinite}
Let \((X,f)\) be an infinite connectivity system, and let \(k\geq 0\).
Define
\[
\mathrm{Fr}_k(X,f):=
\{A\subseteq X \mid X\setminus A \text{ is finite and } f(A)\leq k\}.
\]
When this family satisfies the filter axioms, we call it the \emph{Fr\'echet filter of order \(k+1\)} on \((X,f)\).
\end{definition}

\begin{remark}
In contrast with the classical set-theoretic case, \(\mathrm{Fr}_k(X,f)\) need not always be a filter.
Its behavior depends on how the connectivity bound \(f(A)\leq k\) interacts with cofinite subsets.
\end{remark}

\subsection{Selective-Type Notions}
The following definitions are natural analogues of classical notions from the theory of ultrafilters on \(\omega\).

\begin{definition}[Selective ultrafilter]
Let \((X,f)\) be an infinite connectivity system, and let \(F\) be an ultrafilter of order \(k+1\) on \((X,f)\).
We call \(F\) \emph{selective} if for every function \(g:X\to X\), there exists \(A\in F\) such that the restriction \(g|_A\) is either constant or injective.
\end{definition}

\begin{remark}
If \(X\) is equipped with a fixed well-order or is identified with \(\mathbb{N}\), one may also introduce quasi-selective variants analogous to the classical theory.
Such variants depend on the ambient order structure on \(X\), not only on the connectivity system itself.
\end{remark}

\subsection{P-Points and Q-Points}
We now state the corresponding point-like notions in terms of the ultrafilter \(F\).

\begin{definition}[P-point and Q-point]
\label{def:pq_points_connectivity}
Let \((X,f)\) be an infinite connectivity system, let \(k\geq 0\), and let \(F\) be an ultrafilter of order \(k+1\) on \((X,f)\).

\begin{enumerate}
    \item We call \(F\) a \emph{P-point of order \(k+1\)} if for every partition
    \[
    X=\bigsqcup_{n<\omega} X_n
    \]
    with \(X_n\notin F\) and \(f(X_n)\leq k\) for all \(n<\omega\), there exists \(A\in F\) such that
    \[
    |A\cap X_n|<\omega
    \qquad\text{for all } n<\omega.
    \]

    \item We call \(F\) a \emph{Q-point of order \(k+1\)} if for every partition
    \[
    X=\bigsqcup_{n<\omega} X_n
    \]
    into finite sets with \(X_n\notin F\) and \(f(X_n)\leq k\) for all \(n<\omega\), there exists \(A\in F\) such that
    \[
    |A\cap X_n|\leq 1
    \qquad\text{for all } n<\omega.
    \]
\end{enumerate}
\end{definition}

\begin{question}
Under what additional assumptions on \((X,f)\) does the classical equivalence
\[
\text{selective} \Longleftrightarrow \text{P-point}+\text{Q-point}
\]
admit a meaningful analogue for ultrafilters of order \(k+1\)?
\end{question}

\section{Countable Connectivity Systems and Games}

We next consider the countable case, where \(X\) is countably infinite.

\begin{definition}[Countable connectivity system]
A \emph{countable connectivity system} is an infinite connectivity system \((X,f)\) such that \(X\) is countably infinite.
\end{definition}

\subsection{An Ultrafilter Game on a Countable Connectivity System}
The following game is a natural countable analogue of the ultrafilter game.

\begin{game}[Ultrafilter game on a countable connectivity system]
\label{game:ultrafilter_countable}
Let \((X,f)\) be a countable connectivity system, and let \(D\subseteq X^{\mathbb{N}}\).
Two players, Player~I and Player~II, alternately choose elements of \(X\), thereby producing a sequence
\[
a=(a_0,a_1,a_2,\dots)\in X^{\mathbb{N}}.
\]
We write
\[
\operatorname{supp}(a):=\{a_n \mid n\in \mathbb{N}\}\subseteq X.
\]

Player~I wins if
\[
a\in D
\qquad\text{and}\qquad
f(\operatorname{supp}(a))\leq k.
\]
Otherwise Player~II wins.

The game is said to be \emph{determined} if one of the two players has a winning strategy.
\end{game}

\begin{remark}
The interaction between determinacy of Game~\ref{game:ultrafilter_countable} and the existence of ultrafilters of order \(k+1\) appears to be an interesting open direction.
At present, it is best formulated as a programmatic question rather than as an established theorem.
\end{remark}

\subsection{Elimination Games with an Invisible Robber}
When a countable connectivity system is induced by a graph or digraph, one may also consider pursuit-evasion games constrained by the connectivity function.

\begin{definition}[Elimination game with an invisible robber]
Let \(G\) be a countable graph or digraph with an associated connectivity function \(f\), and let \(k\geq 0\).
An \emph{elimination game with an invisible robber} on \(G\) consists of a sequence of cop positions
\[
C_0,C_1,C_2,\dots \subseteq V(G)
\]
satisfying
\[
f(C_i)\leq k
\qquad\text{for all } i,
\]
while the robber moves invisibly inside the components of \(G-C_i\) that remain available.
The cops win if, after finitely many rounds, every possible robber position is eliminated.
Otherwise the robber wins.
\end{definition}

\begin{conjecture}
For suitable graph-induced connectivity systems, robber-winning strategies in elimination games are closely related to non-principal single ultrafilters.
\end{conjecture}

\section{Ultrafilters on Maximum-Connectivity Systems}
We finally record a related class of connectivity-type functions.

\begin{definition}[Maximum-submodular function]
\label{def:maximum_submodular}
Let \(X\) be a finite set.
A set function \(\kappa:2^X\to \mathbb{N}\) is called \emph{maximum-submodular} if for all \(A,B\subseteq X\),
\[
\max\{\kappa(A),\kappa(B)\}
\geq
\max\{\kappa(A\cap B),\kappa(A\cup B)\}.
\]
\end{definition}

A normalized, symmetric, maximum-submodular function may be viewed as a \emph{maximum-submodular connectivity function}, and the pair \((X,\kappa)\) may be called a \emph{maximum-connectivity system}.

\begin{question}
Can one define a satisfactory notion of ultrafilter on a maximum-connectivity system, and if so, how does it interact with tangle-type obstruction objects?
\end{question}
\chapter{Future Tasks}

This chapter collects several directions for future research related to the themes of the present work.
Unless explicitly stated otherwise, the statements in this chapter are intended as proposed definitions, heuristic viewpoints, or open questions rather than as established results.

\section{New Width, Length, and Depth Parameters}

In the present work, ultrafilters arise as obstruction-type objects for branch-width and linear branch-width.
It is therefore natural to ask whether analogous ultrafilter-type objects can also be associated with other width, length, and depth parameters, including directed and linear variants.

\subsection{A Possible Branch-Distance Decomposition}

One natural direction is to enrich branch decompositions by incorporating a distance term.

\begin{definition}[Proposed branch-distance decomposition]
Let \(G=(V,E)\) be a finite graph.
A \emph{branch-distance decomposition} of \(G\) is a triple
\[
(T,\mu,r),
\]
where \((T,\mu)\) is a branch decomposition of \(G\) and \(r\in V(T)\) is a distinguished root.

For each edge \(e\in E(T)\), let \(\operatorname{mid}_T(e)\) denote the usual middle set associated with \(e\).
Moreover, let
\[
\operatorname{dist}_T(r,e)
\]
denote the distance in \(T\) from the root \(r\) to the edge \(e\), for example the minimum of the distances from \(r\) to the two endpoints of \(e\).

The \emph{branch-distance width} of \((T,\mu,r)\) is defined by
\[
\operatorname{bdw}(T,\mu,r):=
\max_{e\in E(T)}
\Bigl(
|\operatorname{mid}_T(e)|+\operatorname{dist}_T(r,e)
\Bigr).
\]
The \emph{branch-distance width} of \(G\), denoted by \(\operatorname{bdw}(G)\), is the minimum of \(\operatorname{bdw}(T,\mu,r)\) over all branch-distance decompositions of \(G\).
\end{definition}

\begin{remark}
This is only a proposed definition.
Its significance depends on whether it admits meaningful structural, extremal, or algorithmic characterizations, and whether it relates naturally to existing parameters such as tree-distance-width.
\end{remark}

\begin{question}
How does \(\operatorname{bdw}(G)\) compare with tree-distance-width, path-distance-width, branch-width, and linear-width?
\end{question}

\subsection{Directed Tree-Distance and Path-Distance Decompositions}

For directed graphs, distance-based decompositions may be formulated in the following way.

\begin{definition}[Proposed directed tree-distance decomposition]
Let \(G=(V,E)\) be a digraph.
A \emph{directed tree-distance decomposition} of \(G\) is a triple
\[
(R,\{X_t\}_{t\in V(R)},r),
\]
satisfying the following conditions:
\begin{enumerate}
    \item \(R\) is an arborescence rooted at \(r\);
    \item \(\{X_t\}_{t\in V(R)}\) is a partition of \(V(G)\);
    \item for every \(t\in V(R)\) and every \(v\in X_t\),
    \[
    d_G(X_r,v)=d_R(r,t);
    \]
    \item for every directed edge \((u,v)\in E(G)\), if \(u\in X_s\) and \(v\in X_t\), then either \(s=t\) or \(t\) is an out-neighbour of \(s\) in \(R\).
\end{enumerate}
The \emph{width} of the decomposition is
\[
\max_{t\in V(R)} |X_t|.
\]
The minimum such width is called the \emph{directed tree-distance width} of \(G\).
\end{definition}

\begin{definition}[Proposed directed path-distance decomposition]
A \emph{directed path-distance decomposition} is a directed tree-distance decomposition in which the underlying arborescence \(R\) is a directed path.
The corresponding minimum width is called the \emph{directed path-distance width}.
\end{definition}

\begin{question}
Do these directed distance-width parameters admit obstruction-type duals analogous to tangles, filters, or ultrafilters?
\end{question}

\subsection{Further Linear Width Parameters}

Many width parameters admit linear variants obtained by restricting the underlying decomposition structure to a path.
Possible examples deserving systematic study include:
\begin{itemize}
    \item linear amalgam width,
    \item linear modular-decomposition width,
    \item linear tree-cut width,
    \item directed linear branch-width.
\end{itemize}

\begin{remark}
For many width parameters, the linear variant is expected to dominate the unrestricted one:
\[
\text{general width} \leq \text{linear width}.
\]
A systematic treatment of such inequalities in the setting of connectivity systems remains open.
\end{remark}

\begin{question}
Which of these linear width parameters admit natural dual objects, and which of them can be described in terms of ultrafilter-type structures?
\end{question}

\subsection{Matroid Length Parameters}

We next discuss possible length-type parameters for matroids.

For graphs, one of the most familiar length parameters is \emph{tree-length}, which measures the maximum graph distance between two vertices that occur together in a bag of a tree-decomposition, minimized over all such decompositions \cite{coudert2016approximate,dourisboure2004compact,dourisboure2007spanners}.
For matroids, however, there is no canonical intrinsic distance between arbitrary ground elements.
Accordingly, any matroid analogue of tree-length or branch-length must depend on an additional choice of distance-like structure.
The framework below should therefore be regarded as proposed rather than canonical.

\begin{definition}[Proposed distance-labeled matroid]
A \emph{distance-labeled matroid} is a pair \((M,d)\), where \(M\) is a matroid on ground set \(E(M)\) and
\[
d:E(M)\times E(M)\to \mathbb{R}_{\ge 0}
\]
is a symmetric function satisfying
\[
d(e,e)=0
\qquad
\text{for all } e\in E(M).
\]
\end{definition}

\begin{definition}[Proposed tree-length of a distance-labeled matroid]
Let \((M,d)\) be a distance-labeled matroid.
A \emph{tree-decomposition} of \(M\) is a pair \((T,\tau)\), where \(T\) is a tree and
\[
\tau:E(M)\to V(T)
\]
is a mapping.

For each node \(t\in V(T)\), define the corresponding bag by
\[
X_t:=\tau^{-1}(t)\subseteq E(M).
\]
The \emph{\(d\)-length} of the bag \(X_t\) is
\[
\lambda_d(X_t):=\max\{d(e,e')\mid e,e'\in X_t\}.
\]
The \emph{tree-length} of the decomposition \((T,\tau)\) is
\[
\operatorname{tl}_d(T,\tau):=
\max_{t\in V(T)} \lambda_d(X_t).
\]
The \emph{tree-length} of \((M,d)\) is
\[
\operatorname{tl}_d(M):=
\min_{(T,\tau)} \operatorname{tl}_d(T,\tau).
\]
\end{definition}

\begin{definition}[Proposed branch-length of a distance-labeled matroid]
Let \((M,d)\) be a distance-labeled matroid.
A \emph{branch-decomposition} of \(M\) is a pair \((T,\mu)\), where \(T\) is a subcubic tree and
\[
\mu:E(M)\to L(T)
\]
is a bijection onto the leaf set of \(T\).

For an edge \(e\in E(T)\), let \(T_1\) and \(T_2\) be the two components of \(T-e\), and define
\[
E_1:=\mu^{-1}(L(T_1)),
\qquad
E_2:=\mu^{-1}(L(T_2)).
\]
Assume further that to each cut edge \(e\in E(T)\) there is assigned a designated separator
\[
\operatorname{mid}(e)\subseteq E(M).
\]
Then the \emph{branch-length} of \((T,\mu)\) is defined by
\[
\operatorname{bl}_d(T,\mu):=
\max_{e\in E(T)}
\max\{d(x,y)\mid x,y\in \operatorname{mid}(e)\},
\]
with the convention that the inner maximum is \(0\) whenever \(|\operatorname{mid}(e)|\le 1\).
The \emph{branch-length} of \((M,d)\) is the minimum of \(\operatorname{bl}_d(T,\mu)\) over all branch-decompositions.
\end{definition}

\begin{remark}
A useful future direction would be to identify natural choices of \(d\) and \(\operatorname{mid}(e)\) arising from representable matroids, lattice-path matroids, or matroids associated with graphs or hypergraphs.
\end{remark}

\subsection{Branch-Breadth and Linear-Breadth}

We next consider breadth-type analogues of branch-width and linear-width for graphs.

Recall that tree-breadth measures how well each bag of a tree-decomposition can be covered by a single graph ball of small radius, and path-breadth is the corresponding path analogue \cite{leitert2016strong,leitert2017tree}.
The following definitions are natural branch- and linear-ordering counterparts.

\begin{definition}[Proposed branch-breadth]
Let \(G=(V,E)\) be a graph, and let \((T,\mu)\) be a branch-decomposition of \(G\), where
\[
\mu:E(G)\to L(T)
\]
is a bijection.

For each edge \(e\in E(T)\), let \(T_1\) and \(T_2\) be the two components of \(T-e\), and define
\[
E_i:=\mu^{-1}(L(T_i))
\qquad
(i=1,2).
\]
Let
\[
V_i:=\{v\in V(G)\mid v \text{ is incident with some edge of } E_i\},
\]
and define the corresponding middle set by
\[
\operatorname{mid}(e):=V_1\cap V_2.
\]
The \emph{breadth} of \(e\) is
\[
\operatorname{br}(e):=
\min_{c\in V(G)}
\max_{x\in \operatorname{mid}(e)} d_G(c,x).
\]
The \emph{branch-breadth} of the decomposition \((T,\mu)\) is
\[
\operatorname{bb}(T,\mu):=
\max_{e\in E(T)} \operatorname{br}(e),
\]
and the \emph{branch-breadth} of \(G\) is
\[
\operatorname{bb}(G):=
\min_{(T,\mu)} \operatorname{bb}(T,\mu).
\]
\end{definition}

\begin{definition}[Proposed linear-breadth]
Let \(G=(V,E)\) be a graph, and let
\[
(e_1,e_2,\dots,e_m)
\]
be a linear ordering of \(E(G)\).

For each \(1\le i\le m-1\), define
\[
E_1^i:=\{e_1,\dots,e_i\},
\qquad
E_2^i:=\{e_{i+1},\dots,e_m\},
\]
and let \(V_1^i\) and \(V_2^i\) be the sets of vertices incident with edges in \(E_1^i\) and \(E_2^i\), respectively.
Set
\[
\operatorname{mid}(i):=V_1^i\cap V_2^i.
\]
The \emph{breadth} of the ordering is
\[
\operatorname{lb}(e_1,\dots,e_m):=
\max_{1\le i\le m-1}
\min_{c\in V(G)}
\max_{x\in \operatorname{mid}(i)} d_G(c,x).
\]
The \emph{linear-breadth} of \(G\) is
\[
\operatorname{lb}(G):=
\min_{(e_1,\dots,e_m)} \operatorname{lb}(e_1,\dots,e_m).
\]
\end{definition}

\begin{remark}
These definitions parallel the classical transitions
\[
\text{tree-width} \leadsto \text{branch-width},
\qquad
\text{path-width} \leadsto \text{linear-width},
\]
with bag cardinality replaced by the radius of a ball covering the relevant middle set.
\end{remark}

\subsection{Directed Proper-Path-Width}

We next formulate a directed analogue of proper-path-width.
The following definition mirrors the usual path-decomposition framework, with the edge-covering condition adapted to the directed setting.

\begin{definition}[Directed path-decomposition]
Let \(D=(V,E)\) be a digraph.
A sequence
\[
\mathcal{P}=(X_1,X_2,\dots,X_r)
\]
of subsets of \(V\) is called a \emph{directed path-decomposition} of \(D\) if:
\begin{enumerate}
    \item
    \[
    \bigcup_{i=1}^r X_i=V;
    \]
    \item for every directed edge \((u,v)\in E\), there exist indices \(i\le j\) such that
    \[
    u\in X_i,
    \qquad
    v\in X_j;
    \]
    \item for all \(1\le i<j<\ell\le r\),
    \[
    X_i\cap X_\ell\subseteq X_j.
    \]
\end{enumerate}
Its \emph{width} is
\[
\max_{1\le i\le r}|X_i|-1.
\]
The \emph{directed path-width} of \(D\), denoted by \(\operatorname{dpw}(D)\), is the minimum width over all directed path-decompositions of \(D\).
\end{definition}

\begin{definition}[Directed proper-path-decomposition]
A directed path-decomposition
\[
\mathcal{P}=(X_1,\dots,X_r)
\]
is called a \emph{directed proper-path-decomposition} if
\[
X_i\not\subseteq X_j
\qquad
\text{for all } i<j.
\]
Equivalently, no bag is contained in a later bag.
The \emph{directed proper-path-width} of \(D\), denoted by \(\operatorname{dppw}(D)\), is the minimum width over all directed proper-path-decompositions of \(D\).
\end{definition}

\begin{remark}
The additional properness condition above is the directed counterpart of the usual strictness condition for proper path-decompositions.
It is cleaner to formulate this condition directly by containment exclusion.
\end{remark}

\subsection{Weighted Width Parameters}

For weighted graphs, the cleanest approach is often to replace cardinalities in the classical objective functions by appropriate total weights.

\begin{definition}[Weighted bandwidth]
Let \(G=(V,E,w)\) be a vertex-weighted graph, where
\[
w:V\to \mathbb{R}_{>0}.
\]
For a linear layout
\[
\varphi:V\to \{1,\dots,|V|\},
\]
define
\[
\operatorname{bw}_w(\varphi):=
\max_{\{u,v\}\in E}
\bigl(w(u)+w(v)\bigr)\,|\varphi(u)-\varphi(v)|.
\]
The \emph{weighted bandwidth} of \(G\) is
\[
\operatorname{bw}_w(G):=
\min_{\varphi}\operatorname{bw}_w(\varphi).
\]
\end{definition}

\begin{definition}[Weighted path-distance width]
Let \(G=(V,E,w)\) be a vertex-weighted graph.
A path-distance decomposition
\[
(P,\{X_t\}_{t\in V(P)},r)
\]
has \emph{weighted width}
\[
\max_{t\in V(P)} \sum_{v\in X_t} w(v).
\]
The minimum such value is called the \emph{weighted path-distance width} of \(G\), denoted by
\[
\operatorname{wpdw}(G).
\]
\end{definition}

\begin{definition}[Weighted carving-width]
Let \(G=(V,E,w)\) be an edge-weighted graph, where
\[
w:E\to \mathbb{R}_{\ge 0}.
\]
For a carving tree \(T\), each edge \(e\in E(T)\) induces a partition of \(V(G)\), and the \emph{congestion} of \(e\) is defined as the total weight of the graph edges crossing that partition.
The minimum possible value of the maximum congestion is called the \emph{weighted carving-width} of \(G\), denoted by
\[
\operatorname{wcvw}(G).
\]
\end{definition}

\begin{remark}
Weighted clique-width and weighted linear clique-width are best formulated within a weighted graph-expression formalism.
Since several inequivalent weighted variants appear in the literature, it is preferable to define such parameters only after fixing the allowed label operations and the way in which weights interact with them.
\end{remark}

\subsection{Directed Linear Width}

We next state an ordering-based version of directed linear width.

\begin{definition}[Proposed directed linear width]
Let \(D=(V,E)\) be a digraph, and let
\[
(e_1,e_2,\dots,e_m)
\]
be a linear ordering of \(E\).

For each \(1\le i\le m-1\), set
\[
E_1^i:=\{e_1,\dots,e_i\},
\qquad
E_2^i:=E\setminus E_1^i.
\]
Let
\[
S_V(E_1^i,E_2^i)
\]
denote the chosen directed separator associated with the cut \((E_1^i,E_2^i)\).
The \emph{width} of the ordering is
\[
\max_{1\le i\le m-1} |S_V(E_1^i,E_2^i)|.
\]
The \emph{directed linear width} of \(D\) is the minimum of this quantity over all linear orderings of \(E(D)\).
\end{definition}

\begin{remark}
To obtain a fully rigorous parameter, the separator operator \(S_V(\cdot,\cdot)\) must be specified in advance.
Different choices may lead to genuinely different directed width notions.
\end{remark}

\subsection{Directed Connectivity Systems}

The notions of directed subsets and directed submodular functions suggest a possible directed analogue of connectivity systems.

\begin{definition}[Directed subset]\cite{Qi1988DirectedSD}
Let \(S\) be a finite set.
A \emph{directed subset} of \(S\) is a function
\[
X:S\to\{-1,0,1\}.
\]
For \(e\in S\), we say that:
\begin{itemize}
    \item \(e\) is \emph{forward} in \(X\) if \(X(e)=1\),
    \item \(e\) is \emph{backward} in \(X\) if \(X(e)=-1\),
    \item \(e\) is absent from \(X\) if \(X(e)=0\).
\end{itemize}
For directed subsets \(X\) and \(Y\), one defines \(X\cap Y\) and \(X\cup Y\) coordinatewise in the usual way.
\end{definition}

\begin{definition}[Directed submodular function]\cite{Qi1988DirectedSD}
Let \(\mathcal{S}\) be a family of directed subsets of a finite set \(S\).
A function
\[
f:\mathcal{S}\to\mathbb{R}
\]
is called \emph{directed submodular} if, for all \(X,Y\in\mathcal{S}\),
\[
f(X)+f(Y)\ge f(X\cap Y)+f(X\cup Y).
\]
\end{definition}

\begin{question}
Can one define a useful notion of \emph{directed connectivity system} based on directed subsets and directed submodular functions, and can such a framework capture directed tree-width, directed branch-width, or related obstruction objects?
\end{question}

\subsection{DAG Parameters}

For directed acyclic graphs, ordering-based width parameters can be defined naturally in terms of topological orderings.

\begin{definition}[DAG cut-width]
Let \(D=(V,E)\) be a directed acyclic graph.
Its \emph{DAG cut-width} is
\[
\operatorname{dcutw}(D):=
\min_{\varphi}
\max_{1\le i\le |V|}
\left|
\{(u,v)\in E \mid \varphi(u)\le i<\varphi(v)\}
\right|,
\]
where the minimum is taken over all topological orderings \(\varphi\) of \(D\).
\end{definition}

\begin{definition}[DAG bandwidth]
Let \(D=(V,E)\) be a directed acyclic graph.
Its \emph{DAG bandwidth} is
\[
\operatorname{dbw}(D):=
\min_{\varphi}
\max_{(u,v)\in E} |\varphi(v)-\varphi(u)|,
\]
where the minimum is taken over all topological orderings \(\varphi\) of \(D\).
\end{definition}

\begin{question}
Which obstruction objects naturally correspond to DAG-path-width, DAG cut-width, and DAG bandwidth?
\end{question}


\section{Further Properties of Ultrafilters on a Connectivity System}

In this section we discuss several additional aspects of ultrafilters on a connectivity system.
Some are immediate consequences of the axioms, whereas others should be regarded as open-ended research directions.

\subsection{Simple Games on a Connectivity System}

Simple games are standard mathematical models of voting systems and collective decision-making.
They are closely related to filters and ultrafilters through the interpretation of winning coalitions.

\begin{definition}[Simple game on a connectivity system]
Let \((X,f)\) be a connectivity system, and let \(k\ge 0\).
A family
\[
W\subseteq \{A\subseteq X\mid f(A)\le k\}
\]
is called a \emph{simple game of order \(k+1\)} on \((X,f)\) if:
\begin{enumerate}
    \item[(SG1)] \(\emptyset\notin W\),
    \item[(SG2)] \(X\in W\),
    \item[(SG3)] whenever \(A\in W\), \(A\subseteq B\subseteq X\), and \(f(B)\le k\), one has \(B\in W\).
\end{enumerate}
The members of \(W\) are called \emph{winning coalitions}.
\end{definition}

\begin{definition}
A simple game \(W\) of order \(k+1\) is called:
\begin{enumerate}
    \item \emph{proper} if
    \[
    A\in W \text{ and } f(A)\le k
    \quad\Longrightarrow\quad
    X\setminus A\notin W;
    \]
    \item \emph{strong} if
    \[
    A\notin W \text{ and } f(A)\le k
    \quad\Longrightarrow\quad
    X\setminus A\in W.
    \]
\end{enumerate}
\end{definition}

\begin{proposition}
Let \(U\) be an ultrafilter of order \(k+1\) on a connectivity system \((X,f)\).
Then \(U\) is a proper strong simple game of order \(k+1\) on \((X,f)\).
\end{proposition}

\begin{proof}
By axiom \((Q3)\), one has \(\emptyset\notin U\).
Since \(U\neq\emptyset\), axiom \((Q2)\) implies \(X\in U\), because
\[
f(X)=f(\emptyset)=0\le k.
\]
Hence \((SG1)\) and \((SG2)\) hold.
Moreover, \((SG3)\) is exactly axiom \((Q2)\).

To prove properness, suppose that \(A\in U\) and \(X\setminus A\in U\).
Then
\[
A\cap (X\setminus A)=\emptyset
\quad\text{and}\quad
f(\emptyset)=0\le k.
\]
Therefore axiom \((Q1)\) yields \(\emptyset\in U\), contradicting \((Q3)\).
Thus \(U\) is proper.

To prove strength, let \(A\subseteq X\) satisfy \(f(A)\le k\), and assume that \(A\notin U\).
Then axiom \((Q4)\) implies \(X\setminus A\in U\).
Hence \(U\) is strong.
\end{proof}

\subsection{Choice Functions Compatible with a Connectivity Constraint}

The ordinary Axiom of Choice concerns arbitrary families of nonempty sets.
In the setting of connectivity systems, one may ask whether a choice can be made while preserving some \(f\)-bounded structure.

\begin{definition}[Admissible choice function]
Let \((X,f)\) be a connectivity system, and let
\[
\{S_\alpha\}_{\alpha\in A}
\]
be a family of nonempty subsets of \(X\) such that
\[
f(S_\alpha)\le k
\qquad
\text{for all } \alpha\in A.
\]
A \emph{choice function} for this family is a map
\[
c:A\to X
\]
such that
\[
c(\alpha)\in S_\alpha
\qquad
\text{for all } \alpha\in A.
\]
\end{definition}

\begin{question}
Under what additional assumptions on \((X,f)\), or on the family \(\{S_\alpha\}_{\alpha\in A}\), can one choose \(c\) so that the image
\[
c[A]=\{c(\alpha)\mid \alpha\in A\}
\]
also satisfies a meaningful connectivity bound, for example
\[
f(c[A])\le k
\qquad\text{or}\qquad
f(c[A])\le g(k)?
\]
\end{question}

\begin{remark}
This question may be especially interesting in the infinite setting, where selective and Ramsey-type ultrafilters could interact with admissible choice principles.
\end{remark}

\subsection{Ultraproducts and Ultrapowers: A Programmatic Viewpoint}

Ultraproducts and ultrapowers are fundamental constructions in model theory.
A satisfactory extension of these notions to ultrafilters on connectivity systems appears to require additional compatibility assumptions, because the ultrafilter axioms on \((I,f)\) only decide those subsets of the index set \(I\) that are \(f\)-admissible.

\begin{question}
Let \((I,f)\) be a connectivity system, and let \(U\) be an ultrafilter of order \(k+1\) on \((I,f)\).
Given a family of structures \(\{M_i\}_{i\in I}\) in a common language, can one define a useful analogue of the classical ultraproduct
\[
\prod_{i\in I} M_i/U
\]
by restricting attention to \(U\)-large index sets that are \(f\)-admissible?
\end{question}

\begin{remark}
A plausible approach would be to identify a class of formulas, or of index sets, whose truth sets are always \(U\)-decidable.
Developing such a framework would amount to a connectivity-system analogue of \L o\'s theorem.
\end{remark}

\subsection{A Brute-Force Algorithm for Constructing an Ultrafilter}

For finite connectivity systems, one may search exhaustively for an ultrafilter.

\begin{algorithm}[H]
\caption{Exhaustive search for an ultrafilter on a finite connectivity system}
\begin{itemize}
    \item \textbf{Input:} A finite connectivity system \((X,f)\) and an integer \(k\ge 0\).
    \item \textbf{Output:} An ultrafilter of order \(k+1\) on \((X,f)\), if one exists.
\end{itemize}

\begin{enumerate}
    \item Compute the \(k\)-admissible family
    \[
    \mathcal{P}_k(X,f):=\{A\subseteq X\mid f(A)\le k\}.
    \]
    \item Enumerate all subfamilies \(F\subseteq \mathcal{P}_k(X,f)\).
    \item For each such \(F\), test whether \(F\) satisfies axioms \((Q1)\)--\((Q4)\) and whether \(\emptyset\notin F\).
    \item Return the first family that passes all tests.
    \item If no such family exists, report that no ultrafilter of order \(k+1\) exists.
\end{enumerate}
\end{algorithm}

\begin{proposition}
The above algorithm is correct.
\end{proposition}

\begin{proof}
By definition, every ultrafilter of order \(k+1\) on \((X,f)\) is a subfamily of \(\mathcal{P}_k(X,f)\).
Hence exhaustive enumeration of all subfamilies of \(\mathcal{P}_k(X,f)\) includes every possible candidate.
The algorithm returns exactly those families satisfying the ultrafilter axioms.
Therefore, if an ultrafilter exists, the algorithm finds one, and if none exists, it correctly reports failure.
\end{proof}

\begin{proposition}
Let
\[
m:=\bigl|\mathcal{P}_k(X,f)\bigr|.
\]
Then the brute-force algorithm runs in time
\[
O(2^m m^2).
\]
Moreover, if one measures space only at the level of enumerating candidate subfamilies of \(\mathcal{P}_k(X,f)\), the algorithm uses
\[
O(m)
\]
storage locations.
A more precise bit-complexity bound depends on the chosen representation of subsets of \(X\).
Since
\[
m\le 2^{|X|},
\]
the running time is doubly exponential in \(|X|\) in the worst case.
\end{proposition}

\begin{proof}
There are exactly \(2^m\) candidate families
\[
F\subseteq \mathcal{P}_k(X,f).
\]
For each candidate \(F\), axioms \((Q1)\) and \((Q2)\) can be verified by checking pairs of members of \(\mathcal{P}_k(X,f)\), which requires \(O(m^2)\) time.
The remaining conditions, namely \((Q3)\) and \((Q4)\), require at most \(O(m)\) additional time.
Therefore, the total running time is
\[
O(2^m m^2).
\]

For the space bound, one only needs to keep in memory the admissible family \(\mathcal{P}_k(X,f)\) together with one current candidate family \(F\subseteq \mathcal{P}_k(X,f)\).
Thus, at the level of family enumeration, the required storage is \(O(m)\).
The corresponding bit-complexity depends on how the subsets of \(X\) are encoded.
\end{proof}

\begin{remark}
This algorithm is not intended to be efficient.
Its purpose is simply to provide a mathematically correct finite decision procedure.
It would be interesting to develop substantially faster algorithms for special classes of graphs or for special types of connectivity functions.
\end{remark}

\subsection{Ultrafilter Width and Expansion-Type Hypotheses}
We next isolate a natural width-type invariant associated with ultrafilters.

\begin{definition}[Ultrafilter width]
Let \((X,f)\) be a finite connectivity system.
The \emph{ultrafilter width} of \((X,f)\), denoted by
\[
\operatorname{ufw}(X,f),
\]
is defined by
\[
\operatorname{ufw}(X,f):=
\max\{k\ge 0 \mid \text{there exists a non-principal ultrafilter of order } k+1 \text{ on } (X,f)\},
\]
provided that at least one such non-principal ultrafilter exists.
\end{definition}

\begin{remark}
Whenever the branch-width duality theorem applies, \(\operatorname{ufw}(X,f)\) is closely related to branch-width.
Indeed, in such settings the existence of a non-principal ultrafilter of order \(k+1\) is equivalent to the failure of branch-width at most \(k\).
\end{remark}

To formulate an expansion-type hypothesis for connectivity systems, it is natural to imitate conductance-type quantities.

\begin{definition}[Connectivity conductance]
Let \((X,f)\) be a finite connectivity system.
For \(S\subseteq X\), define the \(f\)-volume of \(S\) by
\[
\operatorname{vol}_f(S):=\sum_{x\in S} f(\{x\}).
\]
Whenever the denominator is positive, define the \emph{connectivity conductance} of \(S\) by
\[
\Phi_f(S):=
\frac{f(S)}{\min\{\operatorname{vol}_f(S),\operatorname{vol}_f(X\setminus S)\}}.
\]
\end{definition}

\begin{definition}[Small set connectivity expansion problem]
Let \((X,f)\) be a finite connectivity system and let \(0<\delta\le \tfrac12\).
The \emph{small set connectivity expansion problem} asks for a subset \(S\subseteq X\) satisfying
\[
0<\operatorname{vol}_f(S)\le \delta\,\operatorname{vol}_f(X)
\]
that minimizes \(\Phi_f(S)\).
\end{definition}

\begin{conjecture}[Connectivity-system SSE-type hardness]
Under a suitable Small Set Connectivity Expansion Hypothesis for connectivity systems, it is NP-hard to approximate \(\operatorname{ufw}(X,f)\) within a constant factor.
\end{conjecture}

\begin{question}
Can one formulate a natural class of connectivity systems for which the approximation hardness of ultrafilter width follows from the classical Small Set Expansion Hypothesis?
\end{question}

\subsection{Further Variants of Ultrafilters}

Many variants of ultrafilters have been studied in set theory, topology, logic, fuzzy theory, and related areas.
It is therefore natural to ask whether analogous notions can be defined for graphs, directed graphs, infinite graphs, connectivity systems, and infinite connectivity systems.

\begin{question}
Can one define and systematically study connectivity-system analogues of partition ultrafilters, uniform ultrafilters, normal ultrafilters, regular ultrafilters, good ultrafilters, fuzzy ultrafilters, selective ultrafilters, Ramsey ultrafilters, and related variants?
\end{question}

\begin{remark}
A particularly appealing goal would be to determine which of these variants correspond to known width parameters, game-theoretic obstructions, or decomposition phenomena.
\end{remark}

\section{Other Research Themes for Graph Width Parameters}

We conclude by listing several broader directions in the theory of graph width parameters that remain closely related to the present work.

\begin{itemize}
    \item \textbf{Hierarchy questions and restricted graph classes.}
    Study the behavior of width parameters on special graph classes such as line graphs, planar graphs, AT-free graphs, chordal graphs, and sequence graphs
    \cite{bottcher2010bandwidth,cabello2012algorithms,69}.

    \item \textbf{Algorithms under width constraints.}
    Develop exact, approximation, and parameterized algorithms under bounded-width assumptions
    \cite{wanke1994bounded,kawarabayashi2008simpler,lagergren1996efficient,bodlaender1998parallel}.

    \item \textbf{Generalized width parameters.}
    Investigate structural properties of connected width, directed width, layered width, linear width, and infinite-width-type notions
    \cite{diestel2018connected,hamann2016bounding}.

    \item \textbf{Game-theoretic interpretations.}
    Clarify the relationship between width parameters and pursuit--evasion or search games
    \cite{kosowski2015k,Soares2013PursuitEvasionDA}.

    \item \textbf{Forbidden structures and obstruction theory.}
    Study how forbidden minors, forbidden substructures, tangles, profiles, and ultrafilter-like objects characterize bounded width
    \cite{Gurski2019ForbiddenDM,takahashi1995minimal}.
\end{itemize}

\newpage

\appendix

\chapter{Various Width Parameters}
A large number of graph width parameters have been introduced and studied in the literature.  
Many of them play important roles not only in graph theory itself but also in algorithms, optimization, logic, networks, and computational biology.  
Because of this broad applicability, the study of graph width parameters continues to be an active and growing area of research.

A central topic in this area is the comparison of different width parameters, including inequalities, upper bounds, lower bounds, and equivalences; see also Appendix~C.  
In future work, we also intend to investigate possible connections between width parameters and ultrafilters.

Unless stated otherwise, all parameter comparisons below are understood for arbitrary graphs.  
We say that a parameter \(p\) \emph{upper-bounds} a parameter \(q\) if there exists a non-decreasing function \(f\) such that
\[
q(G)\leq f(p(G))
\qquad
\text{for every graph } G.
\]
If no such function exists, then \(q\) is said to be \emph{unbounded in terms of} \(p\).
As usual, if \(p\) upper-bounds \(q\) and \(q\) upper-bounds \(r\), then \(p\) upper-bounds \(r\).

\section{Tree-width}
Tree-width is one of the most fundamental graph width parameters.  
It measures how close a graph is to a tree and is defined via tree-decompositions
\cite{47,213,214,215}.  
Many problems that are computationally difficult on general graphs become tractable on graph classes of bounded tree-width.

\begin{definition}[Tree-decomposition]
A \emph{tree-decomposition} of a graph \(G=(V,E)\) is a pair
\[
\bigl(\{X_i\mid i\in I\},\, T=(I,F)\bigr),
\]
where each \(X_i\subseteq V\) is called a \emph{bag}, and \(T\) is a tree whose nodes are indexed by the bags, such that:
\begin{enumerate}
    \item
    \[
    \bigcup_{i\in I} X_i = V;
    \]
    that is, every vertex of \(G\) appears in at least one bag.
    \item For every edge \(uv\in E\), there exists some \(i\in I\) such that
    \[
    u,v\in X_i.
    \]
    \item For every vertex \(v\in V\), the set
    \[
    \{\, i\in I \mid v\in X_i \,\}
    \]
    induces a connected subtree of \(T\).
\end{enumerate}
\end{definition}

\begin{definition}[Tree-width]
The \emph{width} of a tree-decomposition
\[
\bigl(\{X_i\mid i\in I\},\, T=(I,F)\bigr)
\]
is
\[
\max_{i\in I}|X_i|-1.
\]
The \emph{tree-width} of a graph \(G\), denoted by \(\operatorname{tw}(G)\), is the minimum width over all tree-decompositions of \(G\).
A graph has tree-width \(1\) if and only if it is a forest.
\end{definition}

\begin{example}
Let \(K_n\) be the complete graph on \(n\) vertices.
A tree-decomposition of \(K_n\) is obtained by taking a tree with a single node and a single bag
\[
X=V(K_n).
\]
This clearly satisfies all three axioms of a tree-decomposition.
Its width is
\[
|X|-1=n-1.
\]
Since every pair of vertices in \(K_n\) is adjacent, no tree-decomposition of \(K_n\) can have width smaller than \(n-1\). Hence
\[
\operatorname{tw}(K_n)=n-1.
\]
\end{example}

Typical obstruction-type objects for tree-width include tangles, ultrafilters, and brambles
\cite{47,fujitaultrafilter,239,240,241,242}.  
For broader introductions to tree-width, see for example
\cite{bienstock1989graph,kloks1994treewidth,bodlaender1998partial,bronner2006introduction}.

\subsection{Selected Relations Involving Tree-width}

We next list several representative relationships between tree-width and other graph parameters.

\begin{theorem}
The following statements are known.
\begin{enumerate}
    \item If \(G\) has tree-width \(k\), then \(G\) is a subgraph of a chordal graph with maximum clique size \(k+1\) \cite{bodlaender1998partial}.

    \item For every graph \(G\),
    \[
    \operatorname{tw}(G)\leq \operatorname{pw}(G).
    \]

    \item For every graph \(G\) with \(\operatorname{bw}(G)\geq 2\),
    \[
    \operatorname{bw}(G)\leq \operatorname{tw}(G)+1
    \leq \frac{3}{2}\operatorname{bw}(G).
    \]
    \cite{213}

    \item If \(G\) has average degree \(d\), then
    \[
    \operatorname{tw}(G)\geq d/2.
    \]
    \cite{chandran2005girth}

    \item If \(G\) is \(r\)-degenerate, then
    \[
    r\leq \operatorname{tw}(G),
    \]
    and in particular
    \[
    \delta(G)\leq \operatorname{tw}(G).
    \]
    \cite{chandran2005girth,402}

    \item Every graph of tree-width \(k\) has a balanced separator of size at most \(k+1\)
    \cite{364,blasius2016hyperbolic}.

    \item If \(H\) is a minor of \(G\), then
    \[
    \operatorname{tw}(H)\leq \operatorname{tw}(G).
    \]
    \cite{dallard2024treewidth}

    \item For every graph \(G\),
    \[
    \operatorname{box}(G)\leq \operatorname{tw}(G)+2.
    \]
    \cite{347}

    \item For every graph \(G\),
    \[
    \operatorname{tw}(G)\leq la(G)\leq \operatorname{tw}(G)+1,
    \]
    where \(la(G)\) denotes the largeur d'arborescence \cite{van2002largeur,de1998multiplicities}.

    \item If \(G\) has bounded tree-width, then it also has bounded branch-width, path-width, Boolean-width, MIM-width, clique-width, sparse twin-width, and twin-width
    \cite{31,32,312,313}.

    \item If \(G\) has bounded tree-width, then it also has bounded clique number and bounded chromatic number
    \cite{311,417}.

    \item The bramble number of a graph is equal to its tree-width plus one
    \cite{216,239,240,241,242}.

    \item Every graph of tree-width \(k\) is \(k\)-degenerate
    \cite{400}.

    \item Every graph of tree-width \(k\) admits a \(2\)-balanced binary tree-decomposition of width at most \(3k+2\)
    \cite{401}.
\end{enumerate}
\end{theorem}

\begin{proof}
See the cited references.
\end{proof}

\subsection{Related Concepts for Tree-width}

Many variants and refinements of tree-width have been studied.  
Representative examples include the following.

\begin{itemize}
    \item \textbf{Local tree-width} \cite{45}: the maximum tree-width of an \(r\)-neighbourhood, viewed as a function of \(r\).
    \item \textbf{Linear local tree-width} \cite{dujmovic2013layered,45}: a linear variant of local tree-width.
    \item \textbf{Simple tree-width} \cite{knauer2012simple}: a refinement satisfying
    \[
    \operatorname{tw}(G)\leq stw(G)\leq \operatorname{tw}(G)+1.
    \]
    \item \textbf{Bag-connected tree-width} \cite{138}: tree-width restricted to decompositions whose bags induce connected subgraphs.
    \item \textbf{Layered tree-width} \cite{157,159}: the minimum integer \(k\) such that \(G\) has a tree-decomposition together with a layering in which every bag contains at most \(k\) vertices from each layer.
    \item \textbf{Connected tree-width} \cite{181,183}: the minimum width of a tree-decomposition whose bags induce connected subgraphs.
    \item \textbf{Special tree-width} \cite{185}: a parameter lying between tree-width and path-width.
    \item \textbf{Spaghetti tree-width} \cite{226,315}: a variant closely related to special tree-width.
    \item \textbf{Co-tree-width} \cite{193}: the tree-width of the complement graph.
    \item \textbf{Dual tree-width} \cite{425}: the tree-width of the dual graph.
    \item \textbf{Weighted tree-width} \cite{kammer2016approximate}: the weighted analogue of tree-width.
    \item \textbf{Weak tree-width} \cite{amini2016spectral}: a relaxed form of tree-decomposition related to strong brambles, satisfying
    \[
    2\,\operatorname{wtw}(G)\geq \operatorname{tw}(G)+1.
    \]
\end{itemize}

Other concepts related to tree-width include the maximum order of a grid minor, grid-like minors, Hadwiger number, fractional Hadwiger number, domino tree-width, VF-tree-width, branched tree-width, radial tree-width, twin-treewidth, threshold treewidth, and several hypergraph variants
\cite{harvey2014treewidth,read1968use,bonnet2021metric}.

\subsection{Applications of Tree-width}

Because tree structures are flexible and algorithmically useful, tree-width has found applications in many areas.  
Examples include:

\begin{itemize}
    \item \textbf{Protein structure analysis}
    \cite{xu2005tree,peng2015itreepack}
    \item \textbf{RNA and DNA applications}
    \cite{gurski2008polynomial,yao2024infrared}
    \item \textbf{Metabolic networks}
    \cite{cheng2010efficient,panni2015searching,betzler2006steiner}
    \item \textbf{Social networks}
    \cite{adcock2013tree,gupta2022treewidth,Maniu2019AnES}
    \item \textbf{Bayesian networks}
    \cite{elidan2008learning,torrijos2024structural}
    \item \textbf{Neural networks and graph neural networks}
    \cite{zheng2022graph,barcelo2021graph}
    \item \textbf{Databases}
    \cite{fichte2022exploiting,grohe1999definability}
    \item \textbf{Internet and communication networks}
    \cite{Montgolfier2011TreewidthAH}
    \item \textbf{Markov networks}
    \cite{Srebro2001MaximumLB,Liang2004MethodsAE,Karger2001LearningMN}
    \item \textbf{Tensor networks}
    \cite{Dumitrescu2018BenchmarkingTA}
\end{itemize}

For broader surveys and lecture-style introductions, see
\cite{Bodlaender1993ATG,bienstock1989graph}.
\section{Path-width}

Path-width is a fundamental graph width parameter that measures how close a graph is to a path.  
It is defined via path-decompositions, which are tree-decompositions whose underlying tree is a path
\cite{275,277,278,kashyap2008matroid,hatanaka2015list}.  
An obstruction to small path-width is a blockage \cite{46}.  
Path-width also appears in several applications, including VLSI design \cite{318}.  
In some parts of the literature, path-width is also called \emph{interval-width} \cite{409,410}.

\begin{definition}[Path-decomposition]
A \emph{path-decomposition} of a graph \(G=(V,E)\) is a sequence of subsets
\[
(X_1,X_2,\dots,X_\ell)
\]
of \(V\) such that:
\begin{enumerate}
    \item
    \[
    V=\bigcup_{i=1}^{\ell} X_i;
    \]
    \item for every edge \(uv\in E\), there exists an index \(i\in\{1,\dots,\ell\}\) such that
    \[
    u,v\in X_i;
    \]
    \item for every vertex \(v\in V\), the set of indices
    \[
    \{\, i\in\{1,\dots,\ell\}\mid v\in X_i \,\}
    \]
    forms an interval of \(\{1,\dots,\ell\}\).
\end{enumerate}
\end{definition}

\begin{definition}[Path-width]
The \emph{width} of a path-decomposition \((X_1,\dots,X_\ell)\) is
\[
\max_{1\leq i\leq \ell}|X_i|-1.
\]
The \emph{path-width} of a graph \(G\), denoted by \(\pw(G)\), is the minimum width over all path-decompositions of \(G\).
\end{definition}

For matroids, there is an ordering-based analogue of path-width.

\begin{definition}[Path-width of a matroid]
\cite{kashyap2008matroid}
Let \(M\) be a matroid with ground set \(E(M)=\{e_1,\dots,e_n\}\), and let \(f_M\) be its connectivity function.
For an ordering \((e_1,\dots,e_n)\) of \(E(M)\), define
\[
w_M(e_1,\dots,e_n):=
\max_{1\leq i\leq n} f_M(\{e_1,\dots,e_i\}).
\]
The \emph{path-width} of \(M\) is then
\[
\pw(M):=\min w_M(e_1,\dots,e_n),
\]
where the minimum is taken over all orderings of \(E(M)\).
An ordering attaining this minimum is called an \emph{optimal ordering}.
\end{definition}

\subsection{Selected Relations for Path-width}

We next summarize several standard relations involving path-width.

\begin{theorem}
The following statements are known.
\begin{enumerate}
    \item If a graph \(G\) has bounded path-width, then it also has bounded tree-width.

    \item The path-width of a graph is equal to one less than the minimum clique number among interval supergraphs of \(G\) \cite{213}.

    \item If a graph \(G\) has bounded tree-depth, then it also has bounded path-width \cite{311}.

    \item If a graph \(G\) has bounded path-width, then it also has bounded linear rank-width \cite{323}.

    \item If a graph \(G\) has bounded path-width, then it also has bounded linear clique-width and bounded linear NLC-width \cite{39}.

    \item The vertex separation number of a graph is equal to its path-width \cite{320}.

    \item Path-width is at most cut-width \cite{321}.

    \item If a graph \(G\) has bounded path-width, then it also has bounded clique-width \cite{325}.

    \item For every graph \(G\),
    \[
    \thinness(G)\leq \pw(G)+1.
    \]
    \cite{mannino2007stable}

    \item If a graph \(G\) has bounded bandwidth, then it also has bounded path-width \cite{313}.

    \item If \(G\) has a \(T_h\)-witness, then
    \[
    \pw(G)\geq h.
    \]
        \cite{groenland2023approximating}

    \item For every graph \(G\),
    \[
    \operatorname{ds}(G)\leq \pw(G)+1 \leq (\Delta(G)+1)\operatorname{ds}(G),
    \]
    where \(\operatorname{ds}(G)\) denotes the domination search number of \(G\)    \cite{kreutzer2016complexity}.

    \item The gate matrix layout number is equal to \(\pw(G)-1\).
\end{enumerate}
\end{theorem}

\begin{proof}
See the cited references.
\end{proof}

\subsection{Related Concepts for Path-width}
Several parameters may be viewed as variants, refinements, or linear analogues of path-width.

\begin{itemize}
    \item \textbf{DAG-path-width} \cite{141}: a linear-layout parameter related to DAG-width.

    \item \textbf{Layered path-width} \cite{157,159}: the path analogue of layered tree-width.  
    It is known that every graph with path-width \(k\) has layered path-width at most \((k+1)/2\), and every graph with layered path-width \(\lambda\) has track-number at most \(3\lambda\).

    \item \textbf{Circuit path-width} \cite{razgon2013cliquewidth,bova2017circuit}: the path analogue of circuit tree-width.  
    Related parameters include OBDD width and SDD width \cite{jha2012tractabilit,bova2017circuit}.

    \item \textbf{Simple path-width} \cite{biedl2023complexity}: the least \(w\) such that \(G\) has a \(w\)-simple path-decomposition of width at most \(w\).

    \item \textbf{Row path-width} \cite{157,158}: the linear analogue of row tree-width.

    \item \textbf{Linear local path-width} \cite{Dujmovi2018MinorclosedGC}: a path analogue of linear local tree-width.

    \item \textbf{D-path-width} and \textbf{clique-preserving d-path-width} \cite{201}: parameters arising in the study of branching programs and CNF representations.

    \item \textbf{Semantic path-width} \cite{206}: the path analogue of semantic tree-width.

    \item \textbf{Proper path-width} \cite{276,takahashi1994minimal,takahashi1995proper}: a path-width variant related to mixed search.  
    It satisfies
    \[
    \pw(G)\leq \ppw(G)\leq \pw(G)+1.
    \]
    Moreover, for a simple graph \(G\), \(\ppw(G)\leq k\) if and only if \(G\) is a partial \(k\)-path.

    \item \textbf{Connected path-width} \cite{316}: the linear analogue of connected tree-width.  
    It is known that connected path-width is at most \(2\pw(G)+1\) \cite{dereniowski2012pathwidth}.

    \item \textbf{Persistence path-width} \cite{317}: a path-decomposition in which each vertex appears in at most \(l\) bags.

    \item \textbf{Linked path decompositions} \cite{362}: a weaker form of lean path decomposition.

    \item \textbf{Dual path-width} \cite{425}: the path-width of the dual graph.

    \item \textbf{Radial path-width} \cite{albrechtsen2023structural}: a linear version of radial tree-width.

    \item \textbf{Trellis-width} \cite{Kashyap2007MatroidPA,Fomin2016OnIP}: a width notion for linear codes that is essentially equivalent to a path-width-type parameter.
\end{itemize}

\subsection{Applications of Path-width}

Applications of path-width include the following areas.

\begin{itemize}
    \item \textbf{VLSI design} \cite{ohtsuki1979one,mohring1995vlsi,Mhring1990GraphPR}
    \item \textbf{Compiler design} \cite{bodlaender1998linear,Conrado2023TheBP}
    \item \textbf{Blockchain and DAG-based systems} \cite{kawahara2024graph,kasahara2023dag}
    \item \textbf{Natural language processing} \cite{KornaiTuza1992}
    \item \textbf{IoT and network applications} \cite{aloisio2024coverage,aloisio2024parameterized}
\end{itemize}

\section{Cut-width}

Cut-width is a classical linear-layout parameter that measures the maximum number of edges crossing a cut induced by a vertex ordering \cite{279,280,281}.  
It is also called the \emph{folding number} in some earlier literature \cite{Chung1985ONTC}.  
Related concepts include page-width \cite{336}.

\begin{definition}[Cut-width]
Let \(G=(V,E)\) be an undirected graph, and let
\[
(v_1,v_2,\dots,v_n)
\]
be an ordering of its vertices.
For each \(\ell\in\{1,\dots,n-1\}\), consider the cut between
\[
\{v_1,\dots,v_\ell\}
\qquad\text{and}\qquad
\{v_{\ell+1},\dots,v_n\}.
\]
The \emph{width} of the ordering is the maximum number of edges \(v_iv_j\in E\) such that
\[
i\leq \ell < j
\]
over all \(\ell\in\{1,\dots,n-1\}\).
The \emph{cut-width} of \(G\), denoted by \(\cutw(G)\), is the minimum such width over all vertex orderings.
\end{definition}

\subsection{Selected Relations for Cut-width}

\begin{theorem}
The following statements are known.
\begin{enumerate}
    \item If a graph \(G\) has bounded cut-width, then it also has bounded carving-width \cite{326}.

    \item For every graph \(G\),
    \[
    \pw(G)\leq \cutw(G)
    \]
    \cite{321}.

    \item For every graph \(G\),
    \[
    \thinness(G)\leq \cutw(G)+1
      \]
    \cite{mannino2007stable,beisegel2024simultaneous}.

    \item The bisection width of a graph is at most its cut-width \cite{diaz2002survey}.

    \item The cut-width of a graph can be used to derive lower bounds on the crossing number \cite{334}.

    \item In subcubic graphs, cut-width is equal to path-width plus one \cite{335}.

    \item If a graph \(G\) has bounded neighborhood diversity, then it also has bounded cut-width \cite{363}.
\end{enumerate}
\end{theorem}

\begin{proof}
See the cited references.
\end{proof}

\subsection{Related Concepts for Cut-width}

Important related parameters include:

\begin{itemize}
    \item \textbf{Weighted cut-width} \cite{179,180}: a weighted version of cut-width.
    \item \textbf{Circuit cut-width} \cite{prasad1999atpg,chong1998satisfiability}: a width parameter for circuits that is relevant to automatic test pattern generation.
    \item \textbf{Cyclic cut-width} \cite{raspaud2000congestion}: the circular analogue of cut-width.
\end{itemize}

\subsection{Applications of Cut-width}

\begin{itemize}
    \item \textbf{VLSI design} \cite{Berend2008MinimalCL}
\end{itemize}

\section{MIM-width}

MIM-width (maximum induced matching width) is a decomposition parameter defined by the size of the largest induced matching across cuts of a branch decomposition \cite{31,32}.  
It has become an important parameter in structural graph theory and algorithm design.

\begin{definition}[MIM-width]
\cite{31,32}
A \emph{branch decomposition} of a graph \(G\) is a pair \((T,L)\), where \(T\) is a subcubic tree and
\[
L:V(G)\to L(T)
\]
is a bijection from the vertex set of \(G\) to the leaves of \(T\).

For an edge \(e\in E(T)\), let \(T_1^e\) and \(T_2^e\) be the two connected components of \(T-e\).  
These induce a bipartition
\[
(A_1^e,A_2^e)
\]
of \(V(G)\), where \(A_i^e\) consists of the vertices mapped by \(L\) to leaves in \(T_i^e\).

For a subset \(A\subseteq V(G)\), let
\[
\operatorname{mim}_G(A)
\]
denote the maximum size of an induced matching in the bipartite graph
\[
G[A,V(G)\setminus A].
\]
The \emph{MIM-width} of the decomposition \((T,L)\) is
\[
\operatorname{mimw}(T,L):=\max_{e\in E(T)} \operatorname{mim}_G(A_1^e).
\]
The \emph{MIM-width} of \(G\), denoted by \(\operatorname{mimw}(G)\), is the minimum value of \(\operatorname{mimw}(T,L)\) over all branch decompositions of \(G\).  
If \(|V(G)|\leq 1\), then \(\operatorname{mimw}(G):=0\).
\end{definition}

\subsection{Selected Relations for MIM-width}

\begin{theorem}
The following statements are known.
\begin{enumerate}
    \item If a graph \(G\) has bounded MIM-width, then it also has bounded sim-width \cite{311}.

    \item If a graph \(G\) has bounded independence number or bounded domination number, then it also has bounded MIM-width \cite{311}.

    \item If a graph \(G\) has bounded Boolean-width, then it also has bounded MIM-width \cite{31,32}.

    \item If a graph \(G\) has bounded clique-width, then it also has bounded MIM-width \cite{227}.
\end{enumerate}
\end{theorem}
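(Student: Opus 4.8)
The plan is to treat the four bullet points as four independent parameter comparisons and to reduce each to a pointwise inequality between the symmetric cut-functions that underlie the respective branch decompositions. Mim-width, sim-width, and boolean-width are all defined over the same object, a branch decomposition \((T,L)\) together with a cut-function \(A \mapsto g(A)\), and the width is the minimum over all such decompositions of the maximum of \(g\) over the induced cuts. The unifying observation I would use throughout is that if \(g_1(A) \leq h\!\left(g_2(A)\right)\) for every vertex subset \(A\) and some non-decreasing \(h\), then evaluating both parameters on a single \(g_2\)-optimal decomposition yields \(p_1(G) \leq h\!\left(p_2(G)\right)\); hence "bounded \(p_2\) implies bounded \(p_1\)" follows from the pointwise comparison alone.

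For the first item I would show $\text{sim}(A) \leq \text{mim}(A)$ for every $A$: a special induced matching across the cut $(A, V(G)\setminus A)$ is by definition induced in all of $G$, so in particular it is an induced matching in the bipartite graph $G[A, V(G)\setminus A]$, giving $\text{simw}(G) \leq \text{mimw}(G)$. For the third and fourth items I would use the neighbourhood-counting argument: an induced matching $\{a_i b_i\}_{i=1}^{k}$ across a cut, with $a_i \in A$ and $b_i \in V(G)\setminus A$, forces at least $2^k$ pairwise-distinct neighbourhood traces on the far side, since for distinct subsets $S \subseteq \{a_1,\dots,a_k\}$ the sets $N(S) \cap \{b_1,\dots,b_k\}$ are distinct by inducedness. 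This gives $\text{mim}(A) \leq \text{boolw-cut}(A)$ and hence $\text{mimw}(G) \leq \text{boolw}(G)$, settling the boolean-width item; the clique-width item then follows either by the same counting applied to a clique-width expression, or by chaining through the known bound $\text{boolw}(G) \leq \text{cw}(G)$.

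The second item is where I expect the real work to lie, and I would split it into its two halves. The independence-number half is short: selecting one endpoint from each edge of an induced matching of size $k$ in $G[A, V(G)\setminus A]$ produces an independent set of size $k$ in $G$, so $\text{mim}(A) \leq \alpha(G)$ for \emph{every} $A$ and thus $\text{mimw}(G) \leq \alpha(G)$ on any branch decomposition whatsoever. The domination-number half is the main obstacle, because there is no pointwise inequality as clean as the independence case: a single vertex outside the cut can dominate endpoints of several matching edges, so one cannot simply charge each matched pair to a distinct element of a dominating set. Here I would attempt to bound the number of mutually non-adjacent, non-dominated matched pairs by a function of $\gamma(G)$, with the delicate point being that the charging argument must respect the inducedness of the matching; verifying this is the step I expect to resist a fully self-contained treatment. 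Should a direct argument prove unwieldy, the fallback is to invoke the cited source \cite{311}, which is consistent with the survey character of this appendix and with the treatment of the analogous hierarchy theorems for tree-width, path-width, and cut-width established above.
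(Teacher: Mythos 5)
Measured against the paper itself there is little to compare: the paper's entire proof of this theorem is the sentence ``Please refer to each reference,'' so any real argument you give goes beyond the source. Judged against the literature, your items on sim-width, boolean-width, and clique-width are correct and are the standard pointwise cut-function comparisons: a matching induced in \(G\) is a fortiori induced in the bipartite cut graph, so \(\mathrm{sim}(A)\leq\mathrm{mim}(A)\); and your \(2^k\)-counting argument is sound, because inducedness in \(G[A,V(G)\setminus A]\) says precisely that \(a_ib_j\notin E\) for \(i\neq j\), which is all the counting needs, giving \(\mathrm{mim}(A)\leq\log_2|U(A)|\) and hence \(\mathrm{mimw}(G)\leq\mathrm{boolw}(G)\); chaining through the known bound \(\mathrm{boolw}(G)\leq\mathrm{cw}(G)\) settles the clique-width item.

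The genuine gap is in the half you call ``short.'' The claimed inequality \(\mathrm{mim}(A)\leq\alpha(G)\) for every \(A\) is false: inducedness in the bipartite graph \(G[A,V(G)\setminus A]\) constrains only cut edges, so the chosen endpoints \(a_1,\dots,a_k\) may be pairwise adjacent inside \(A\). Concretely, let \(G\) consist of two \(k\)-cliques \(\{a_1,\dots,a_k\}\) and \(\{b_1,\dots,b_k\}\) joined by the perfect matching \(a_ib_i\), and take \(A=\{a_1,\dots,a_k\}\): then \(G[A,V(G)\setminus A]\) \emph{is} a perfect matching, so \(\mathrm{mim}(A)=k\), while \(\alpha(G)=2\). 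So ``\(\mathrm{mimw}(G)\leq\alpha(G)\) on any branch decomposition whatsoever'' fails badly; any correct proof must \emph{construct} a decomposition (here, cuts that pair \(a_i\) with \(b_i\) have width at most \(2\)), i.e.\ the independence half is no more pointwise than the domination half, and your decision to downgrade only the latter to the cited source should apply to both. For the domination half your suspicion can be sharpened: attaching a universal vertex to any graph \(H\) makes \(\gamma=1\) while leaving \(\mathrm{mim}(A)\) of every cut, and hence \(\mathrm{mimw}\), essentially unchanged, so no bound \(\mathrm{mim}(A)\leq f(\gamma(G))\) and no charging of matched pairs to dominators can exist; since graphs of unbounded mim-width are known, this even casts doubt on the dominating-number bullet as literally stated, so the exact formulation in the cited reference should be checked rather than reproved. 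Your fallback of invoking the citation is, in the end, exactly what the paper's own proof does.
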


\begin{proof}
See the cited references.
\end{proof}

\subsection{Related Concepts for MIM-width}
Important related notions include the following.

\begin{itemize}
    \item \textbf{One-sided MIM-width} \cite{33}: a one-sided variant useful, for example, in studying the maximum independent set problem.  
    Any graph with tree-independence number \(k\) has one-sided MIM-width at most \(k\) \cite{33}.

    \item \textbf{SIM-width} \cite{34,322}: a refinement of MIM-width that is often more useful in structural settings.  
    Any graph with minor-matching hypertree-width \(k\) has SIM-width at most \(k\) \cite{33}.
\end{itemize}

\section{Linear MIM-width}

Linear MIM-width is the linear-layout version of MIM-width \cite{31,32}.  
It is obtained by restricting the underlying decomposition tree to a caterpillar, or equivalently, by considering linear orderings of the vertices.

\subsection{Selected Relations for Linear MIM-width}

\begin{theorem}
The following statements are known.
\begin{enumerate}
    \item If a graph \(G\) has bounded linear MIM-width, then it also has bounded MIM-width.

    \item If a graph \(G\) has bounded linear MIM-width, then it also has bounded one-sided MIM-width and bounded SIM-width \cite{beisegel2024simultaneous}.

    \item If a graph \(G\) has bounded tree-independence number, then it also has bounded linear MIM-width \cite{beisegel2024simultaneous}.

    \item If a graph \(G\) has bounded path-independence number, then it also has bounded linear MIM-width \cite{beisegel2024simultaneous}.

    \item If a graph \(G\) has bounded simultaneous interval number, then it also has bounded linear MIM-width \cite{beisegel2024simultaneous}.

    \item If a graph \(G\) has bounded thinness, then it also has bounded linear MIM-width \cite{beisegel2024simultaneous}.
\end{enumerate}
\end{theorem}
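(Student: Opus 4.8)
The plan is to separate the six assertions into two families according to the direction of the bound: the first two state that linear-MIM-width \emph{upper bounds} the listed parameters, whereas the remaining four state that it is \emph{upper bounded} by them, and these require genuinely different arguments. Throughout I write \(\mathrm{mimw}(G)\) for the MIM-width and \(\mathrm{lmimw}(G)\) for the linear-MIM-width, and I use freely the definitions of \(\mathrm{mim}(A)\) and \(\mathrm{mimw}(T,L)\) given above, together with the convention (stated at the start of Appendix A) that the relation ``\(p\) upper bounds \(q\)'' is transitive.

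For the first family I would start from the elementary observation that a linear branch decomposition, being one whose underlying subcubic tree is a caterpillar, is a special case of a branch decomposition. Hence the minimum of \(\max_{e\in E(T)}\mathrm{mim}(A^e_1)\) taken over all branch decompositions is no larger than the minimum taken over the linear ones, which gives \(\mathrm{mimw}(G)\le \mathrm{lmimw}(G)\) for every \(G\) and settles the first assertion. For the second, I would chain this inequality with the relation already recorded in the MIM-width hierarchy, namely that bounded MIM-width implies bounded sim-width, obtaining \(\text{sim-width}(G)\le \mathrm{mimw}(G)\le \mathrm{lmimw}(G)\); transitivity transfers the bound to sim-width. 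The placement of o-mim-width relative to \(\mathrm{lmimw}\) is the comparison recorded in \cite{beisegel2024simultaneous}, and combining it with the same inequality completes the o-mim-width half by transitivity.

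For the second family the strategy is to convert the decomposition witnessing the hypothesis parameter into a caterpillar branch decomposition and then bound the mim-value of every cut. The decisive structural fact is that if \(u_1v_1,\dots,u_kv_k\) is an induced matching in \(G[A,\,V(G)\setminus A]\) with each \(u_i\in A\), then \(\{u_1,\dots,u_k\}\) and \(\{v_1,\dots,v_k\}\) are each independent sets of \(G\); consequently \(\mathrm{mim}(A)\) never exceeds the induced independence number of any set separating the two sides of the cut. For bounded path-independence number I would take the path decomposition realising it, order the vertices consistently with its bags, use that order as the leaf order of a caterpillar, and observe that each prefix cut is ``screened'' by a single bag whose induced independence number is at most \(k\); by the fact above every \(\mathrm{mim}(A^e_1)\) is then at most \(k\), so \(\mathrm{lmimw}(G)\le k\). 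The tree-independence case reduces to this one once one checks that a bounded tree-independence number yields a bounded path-independence number along a suitable linearisation of the decomposition tree, after which the previous step applies verbatim.

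The hard part will be the simultaneous-interval-number and thinness assertions, where the hypothesis supplies a geometric or ordering structure rather than a ready-made separator of bounded independence number. Here I would extract from the interval representation, respectively from the thin vertex ordering partitioned into its thinness classes, a linear ordering of \(V(G)\), take it as the leaf order of the caterpillar, and then argue that at each prefix cut the straddling edges that can participate in an induced matching are confined to a bounded number of interval or thinness classes; bounding \(\mathrm{mim}(A^e_1)\) then reduces to showing that within one class the interval (or thin) structure admits no large induced matching across the cut. Managing this class-by-class bookkeeping, and making the resulting constant explicit, is the main technical obstacle. For a first pass I would be content to supply the conversion skeleton above and defer the precise constants to \cite{beisegel2024simultaneous}.
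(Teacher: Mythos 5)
Your first two items are sound and coincide with what the paper implicitly relies on (the paper itself offers no argument for this theorem beyond ``Please refer to each reference''): a caterpillar decomposition is a branch decomposition, so \(\mathrm{mimw}(G)\le\mathrm{lmimw}(G)\), sim-width follows by transitivity through the recorded bound \(\text{simw}\preceq\text{mimw}\), and deferring the o-mim comparison to \cite{beisegel2024simultaneous} is no worse than what the paper does. The second family, however, rests on a false structural lemma. You claim that if \(u_1v_1,\dots,u_kv_k\) is an induced matching in \(G[A,\,V(G)\setminus A]\), then \(\{u_1,\dots,u_k\}\) and \(\{v_1,\dots,v_k\}\) are independent in \(G\). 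They need not be: the bipartite cut graph \(G[A,V(G)\setminus A]\) contains only the crossing edges, so edges inside \(A\) (or inside the complement) are invisible to the induced-matching condition. Concretely, let \(A\) contain a clique \(K=\{u_1,\dots,u_k\}\) and attach pendant vertices \(v_1,\dots,v_k\notin A\) with \(u_iv_i\) the only crossing edges; every crossing edge meets the separator \(K\), whose independence number is \(1\), yet \(\{u_iv_i\}_{i=1}^k\) is an induced matching of size \(k\) across the cut. The property you invoked is precisely the defining feature of \emph{one-sided} mim-width, where one endpoint class is required to be independent in \(G\) — it is why tree-independence number bounds o-mim-width — but it is not available for plain \(\mathrm{mim}(A)\), so your ``screening bag'' step for the path-independence item fails as written.

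The reduction of the tree-independence item to the path-independence item is a second genuine gap: bounded tree-independence number does not yield bounded path-independence number under any linearisation, for the same reason bounded treewidth does not yield bounded pathwidth. Complete binary trees are chordal, hence have tree-independence number \(1\), but every path decomposition of the complete binary tree of height \(h\) has a bag of size at least \(\mathrm{pw}+1=\Omega(h)\), and since bags induce forests their independence number is at least half their size, so the path-independence number is \(\Omega(h)\). The same family is a warning for the route as a whole: trees have tree-independence number \(1\) yet admit no bounded-independence path structure, so any correct proof of the tree-independence bullet must use a mechanism entirely different from your reduction (in the literature the tree-independence bound is naturally paired with o-mim-width, which is where your independence lemma is actually valid, and the statement as listed should be checked carefully against \cite{beisegel2024simultaneous}). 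Finally, note that the thinness item does not need your false lemma at all: taking a thin order with \(k\) classes as the linear layout, if two matched crossing edges \(u_1v_1,u_2v_2\) have same-class left endpoints with \(u_1<u_2\), the thinness property applied to \(u_1<u_2<v_1\) forces the crossing edge \(u_2v_1\), contradicting inducedness; hence each cut satisfies \(\mathrm{mim}\le k\), giving \(\mathrm{lmimw}(G)\le\mathrm{thin}(G)\) directly — this is the kind of class-by-class argument your sketch gestures at but does not supply.
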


\begin{proof}
See the cited references.
\end{proof}

\subsection{Related Concepts for Linear MIM-width}

A closely related parameter is \emph{linear SIM-width} \cite{34}, which is the linear-layout analogue of SIM-width.  
Trivially, bounded linear SIM-width implies bounded SIM-width.

\section{Boolean-width}

Boolean-width is a graph width parameter based on the number of distinct unions of neighborhoods across cuts in a decomposition \cite{35}.  
It is particularly useful in dynamic-programming algorithms over graph decompositions.

\begin{definition}[Decomposition tree]
\cite{35}
A \emph{decomposition tree} of a graph \(G\) is a pair \((T,\delta)\), where \(T\) is a tree whose internal vertices have degree \(3\), and
\[
\delta: L(T)\to V(G)
\]
is a bijection from the leaves of \(T\) to the vertices of \(G\).

Each edge \(e\in E(T)\) induces a bipartition
\[
(A_e,\, V(G)\setminus A_e)
\]
of \(V(G)\), obtained by deleting \(e\) and taking the vertices corresponding to the leaves in one of the two resulting components.
\end{definition}

\begin{definition}[Boolean-width]
\cite{35}
Let \(G\) be a graph and let \(A\subseteq V(G)\).  
Define
\[
U(A):=\{\,N(X)\cap (V(G)\setminus A)\mid X\subseteq A\,\},
\]
where \(N(X)\) denotes the union of the neighborhoods of the vertices in \(X\).

The \emph{boolean dimension} of the cut \(A\) is
\[
\operatorname{bool\mbox{-}dim}(A):=\log_2 |U(A)|.
\]

For a decomposition tree \((T,\delta)\), its \emph{boolean-width} is
\[
\operatorname{boolw}(T,\delta):=\max_{e\in E(T)} \operatorname{bool\mbox{-}dim}(A_e),
\]
where \(A_e\) is one side of the cut induced by \(e\).

The \emph{boolean-width} of \(G\), denoted by \(\operatorname{boolw}(G)\), is the minimum value of
\(\operatorname{boolw}(T,\delta)\) over all decomposition trees of \(G\).
\end{definition}

\subsection{Selected Relations for Boolean-width}

\begin{theorem}
The following statements are known.
\begin{enumerate}
    \item If a graph \(G\) has bounded tree-width, then it also has bounded Boolean-width \cite{31,32}.

    \item For every graph \(G\) with \(\operatorname{bw}(G)\neq 0\),
    \[
    \operatorname{boolw}(G)\leq \operatorname{bw}(G)
    \]
        \cite{313}.

    \item If a graph \(G\) has bounded Boolean-width, then it also has bounded MIM-width \cite{31,32}.
\end{enumerate}
\end{theorem}

\begin{proof}
See the cited references.
\end{proof}

\section{Linear Boolean-width}

Linear Boolean-width is the linear-layout version of Boolean-width \cite{35}.  
It is obtained by restricting the decomposition tree to a caterpillar, or equivalently, by considering cuts arising from a linear ordering of the vertices.

Trivially, if a graph \(G\) has bounded linear Boolean-width, then it also has bounded Boolean-width.  
Moreover, for every graph \(G\),
\[
\operatorname{lboolw}(G)\leq \pw(G)+1.
\]
\cite{348}

\section{Bandwidth}

Bandwidth is a classical linear-layout parameter \cite{259,260}.  
It measures how close adjacent vertices must remain in a linear ordering of the vertex set.  
It has important applications in areas such as VLSI design and network layout.  
Related concepts include split bandwidth \cite{322}.

\begin{definition}[Bandwidth]
\cite{259,260}
Let \(G=(V,E)\) be a graph, and let
\[
\varphi:V(G)\to \{1,2,\dots,|V(G)|\}
\]
be a bijection, called a \emph{layout} of \(G\).

The \emph{bandwidth} of the layout \(\varphi\) is
\[
\operatorname{bw}_{\varphi}(G):=
\max_{\{u,v\}\in E(G)} |\varphi(u)-\varphi(v)|.
\]

The \emph{bandwidth} of \(G\), denoted by \(\operatorname{bw}(G)\), is the minimum value of
\(\operatorname{bw}_{\varphi}(G)\) over all layouts \(\varphi\) of \(G\).
\end{definition}

\subsection{Selected Relations for Bandwidth}

\begin{theorem}
The following statements are known.
\begin{enumerate}
    \item
    \[
    \operatorname{ppw}(G)\leq \operatorname{bw}(G)\leq 2\,\operatorname{ppw}(G),
    \]
    where \(\operatorname{ppw}(G)\) denotes the path-partition-width of \(G\).

    \item The max-leaf number is a minimal upper bound for bandwidth \cite{341}.

    \item
    \[
    \operatorname{bw}(G)\leq 2\,\operatorname{pdw}(G),
    \]
    where \(\operatorname{pdw}(G)\) denotes the path-distance-width of \(G\).

    \item If a graph \(G\) has bounded bandwidth, then it also has bounded bisection width \cite{341}.

    \item If a graph \(G\) has bounded bandwidth, then it also has bounded tree-width and bounded path-width \cite{313}.

    \item If a graph \(G\) has bounded bandwidth, then it also has bounded proper thinness \cite{bonomo2021intersection}.

    \item If a graph \(G\) has bounded bandwidth, then it also has bounded maximum degree, bounded \(c\)-closure, bounded acyclic chromatic number, and bounded \(h\)-index \cite{341,421,422}.

    \item Treespan is at most bandwidth \cite{352}.

    \item For each \(\Delta\geq 2\), every graph \(G\) on \(n\) vertices with \(\Delta(G)\leq \Delta\) satisfies
    \[
    \operatorname{bw}(G)\leq
    \frac{6n}{\log_{\Delta}\!\left(\dfrac{n}{\operatorname{separator}(G)}\right)}
    \]
        \cite{bottcher2010bandwidth}.

    \item The slope number \(\operatorname{sn}(G)\) satisfies
    \[
    \operatorname{sn}(G)\leq \frac{1}{2}\operatorname{bw}(G)\bigl(\operatorname{bw}(G)+1\bigr)+1
    \]
        \cite{dujmovic2007graph}.
\end{enumerate}
\end{theorem}

\begin{proof}
See the cited references.
\end{proof}

\subsection{Related Concepts for Bandwidth}
Related parameters include the following.

\begin{itemize}
    \item \textbf{Weighted bandwidth}: the weighted version of bandwidth.
    \item \textbf{Edge bandwidth} \cite{pikhurko2006edge,lin2015new,akhtar2007edge}: the edge-ordering analogue of bandwidth.
    \item \textbf{Cyclic bandwidth} \cite{raspaud2000congestion}: the circular-layout version of bandwidth.
\end{itemize}

\section{Boundary-width}
Vertex-boundary-width and edge-boundary-width arise from discrete isoperimetric problems \cite{36}.  
Roughly speaking, they measure how small the boundary of an initial segment can be made under a linear ordering of the vertices or edges.

\section{Carving-width}
Carving-width is a width parameter based on edge cuts of vertex sets \cite{256,257,258}.  
It is closely related to congestion and network routing, and it may be viewed as an edge-cut counterpart of branch-width.

\begin{definition}[Carving decomposition and carving-width]
\cite{256,257,258}
A \emph{carving decomposition} of a graph \(G\) is a pair \((T,\lambda)\), where \(T\) is a tree whose internal vertices have degree \(3\), and
\[
\lambda: L(T)\to V(G)
\]
is a bijection from the leaves of \(T\) to the vertices of \(G\).

Each edge \(e\in E(T)\) induces a partition
\[
(A_e,\, V(G)\setminus A_e)
\]
of \(V(G)\).  
The \emph{width} of \(e\) is the number of edges of \(G\) with one endpoint in \(A_e\) and the other in \(V(G)\setminus A_e\).

The \emph{width} of the carving decomposition \((T,\lambda)\) is the maximum width over all edges of \(T\).  
The \emph{carving-width} of \(G\), denoted by \(\operatorname{carw}(G)\), is the minimum such width over all carving decompositions of \(G\).  
If \(|V(G)|=1\), then \(\operatorname{carw}(G):=0\).
\end{definition}

\begin{theorem}
The following statements are known.
\begin{enumerate}
    \item If a graph \(G\) has bounded cut-width, then it also has bounded carving-width \cite{326}.

    \item Let \(G\) be a graph with maximum degree \(d\). Then
    \[
    \frac{2}{3}\bigl(\tw(G)+1\bigr)
    \leq
    \operatorname{carw}(G)
    \leq
    d\bigl(\tw(G)+1\bigr)
    \]
      \cite{bienstock1990embedding}.
\end{enumerate}
\end{theorem}

\begin{proof}
See the cited references.
\end{proof}

\subsection{Related Concepts for Carving-width}
A related parameter is the following.

\begin{itemize}
    \item \textbf{Dual carving-width} \cite{424}: the carving-width of the dual graph.
\end{itemize}

\subsection{Applications of Carving-width}
Carving-width is closely related to congestion in network theory.  
In particular, it measures how large an edge cut can become when a network is recursively decomposed, which makes it relevant in routing, communication, and congestion minimization problems \cite{256}.  
It is also known to serve as a minimal upper bound for maximum degree and tree-width in an appropriate sense \cite{311,337}.

\section{Branch-width}

Branch-width is a fundamental graph width parameter defined via branch decompositions \cite{9,10,47,282}.  
Roughly speaking, it measures how well a graph can be recursively separated by cuts of small order.  
Unlike tree-width, which is based on vertex bags in a tree-decomposition, branch-width is defined in terms of edge partitions and the sizes of the corresponding middle sets.  
This parameter extends naturally to matroids and, more generally, to connectivity systems.

Typical obstruction-type objects for branch-width include tangles \cite{9,Hicks2005GraphsBA}, profiles \cite{376,377,378,379,380}, \(k\)-blocks \cite{381,382,383}, ultrafilters \cite{16}, maximal ideals \cite{49}, loose tangles \cite{10}, loose tangle kits \cite{10}, quasi-ultrafilters \cite{14}, weak ultrafilters \cite{15}, tangle kits \cite{Oum2006CertifyingLB}, brambles \cite{Lyaudet2009PartitionsVS,amini2009submodular}, and \((k,m)\)-obstacles \cite{50}.  
Related notions include sphere-cut width \cite{338} and rooted branch-width \cite{153}.

\subsection{Selected Relations for Branch-width}

\begin{theorem}
The following statements are known.
\begin{enumerate}
    \item For every matroid \(M\),
    \[
    \operatorname{bw}(M)\leq \operatorname{bd}(M),
    \]
    where \(\operatorname{bd}(M)\) denotes the branch-depth of \(M\) \cite{329}.

    \item For every matroid \(M\),
    \[
    \operatorname{bw}(M)\leq \operatorname{cdd}(M)\leq \operatorname{cd}(M)
    \quad\text{and}\quad
    \operatorname{bw}(M)\leq \operatorname{cdd}(M)\leq \operatorname{dd}(M),
    \]
    where \(\operatorname{cdd}(M)\), \(\operatorname{cd}(M)\), and \(\operatorname{dd}(M)\) denote contraction-deletion-depth, contraction-depth, and deletion-depth, respectively \cite{311}.

    \item For every graph \(G\) with \(\operatorname{bw}(G)\geq 2\),
    \[
    \operatorname{bw}(G)\leq \operatorname{tw}(G)+1
    \leq \frac{3}{2}\operatorname{bw}(G).
    \]
    \cite{213}

    \item For every graph \(G\) with \(\operatorname{bw}(G)\neq 0\),
    \[
    \operatorname{boolw}(G)\leq \operatorname{bw}(G).
    \]
    \cite{313}

    \item The branch-width of a graph is a lower bound on the zero-forcing number \cite{fast2018effects}.  
    In particular,
    \[
    \operatorname{bw}(G)\leq \operatorname{tw}(G)+1 \leq Z(G)+1,
    \]
    where \(Z(G)\) denotes the zero-forcing number of \(G\). \cite{fast2018effects}

    \item If a graph \(G\) has bounded linear branch-width, then it also has bounded branch-width.
\end{enumerate}
\end{theorem}

\begin{proof}
See the cited references.
\end{proof}

\section{Linear Branch-width}

Linear branch-width, also called \emph{linear-width} \cite{fujita2017linear} or \emph{caterpillar width} \cite{Black2013ForbiddenMF}, is the linear restriction of branch-width.  
Equivalently, it is obtained by restricting the decomposition tree to a caterpillar.  
Like branch-width, it extends naturally to matroids and connectivity systems.

Typical obstruction-type objects for linear branch-width include linear tangles \cite{51}, single ideals \cite{52}, linear loose tangles \cite{3}, linear obstacles \cite{50}, ultra matroids \cite{7}, ultra antimatroids \cite{4}, ultra greedoids \cite{4}, and ultra quasi-matroids \cite{375}; see also \cite{423}.

\subsection{Selected Relations for Linear Branch-width}

\begin{theorem}
The following statements are known.
\begin{enumerate}
    \item For every graph \(G\),
    \[
    \operatorname{pw}(G)\leq \operatorname{lbw}(G)\leq \operatorname{pw}(G)+1,
    \]
    where \(\operatorname{lbw}(G)\) denotes the linear branch-width of \(G\). \cite{330}

    \item If a graph \(G\) has bounded linear branch-width, then it also has bounded branch-width.
\end{enumerate}
\end{theorem}

\begin{proof}
See the cited references.
\end{proof}

\section{Rank-width}

Rank-width is a graph width parameter based on the ranks of adjacency matrices across vertex cuts \cite{37}.  
It is closely related to matroid theory \cite{228,229,230} and to rank-type connectivity functions.  
A typical obstruction-type object for rank-width is a \(\rho\)-tangle \cite{53}.

Since rank-width is defined from a rank-like cut function, and several related notions extend naturally to matroids, we briefly recall one standard definition of a matroid.

\begin{definition}[Matroid via rank function]
\label{def:Matroid}
A \emph{matroid} on a finite set \(E\) is a function
\[
r:2^E\to \mathbb{N}_0
\]
satisfying the following conditions for all \(X,Y\subseteq E\):
\begin{enumerate}
    \item
    \[
    0\leq r(X)\leq |X|;
    \]
    \item if \(X\subseteq Y\), then
    \[
    r(X)\leq r(Y);
    \]
    \item
    \[
    r(X\cup Y)+r(X\cap Y)\leq r(X)+r(Y).
    \]
\end{enumerate}
\end{definition}

For further background on matroids, see, for example, \cite{oxley2006matroid,white1986theory}.

\begin{definition}[Rank-width]
\cite{37}
Let \(G=(V,E)\) be a graph.  
For each subset \(X\subseteq V\), let \(\rho_G(X)\) denote the rank over \(\mathbb{F}_2\) of the \(|X|\times |V\setminus X|\) adjacency matrix between \(X\) and \(V\setminus X\).  
By convention, \(\rho_G(X)=0\) when \(X=\emptyset\) or \(X=V\).

A \emph{rank-decomposition} of \(G\) is a pair \((T,L)\), where \(T\) is a subcubic tree and
\[
L:V(G)\to L(T)
\]
is a bijection from \(V(G)\) to the leaves of \(T\).

Each edge \(e\in E(T)\) induces a bipartition \((A_e,B_e)\) of the leaves of \(T\), and hence a bipartition
\[
\bigl(L^{-1}(A_e),\,L^{-1}(B_e)\bigr)
\]
of \(V(G)\).
The \emph{width} of \(e\) is defined as
\[
\rho_G\bigl(L^{-1}(A_e)\bigr).
\]
The \emph{width} of the rank-decomposition \((T,L)\) is the maximum width of its edges.

The \emph{rank-width} of \(G\), denoted by \(\operatorname{rw}(G)\), is the minimum width over all rank-decompositions of \(G\).  
If \(|V(G)|<2\), then \(\operatorname{rw}(G):=0\).
\end{definition}

\subsection{Selected Relations for Rank-width}

\begin{theorem}
The following statements are known.
\begin{enumerate}
    \item For every graph \(G\),
    \[
    \operatorname{rw}(G)\leq \operatorname{tw}(G)+1\leq \operatorname{pw}(G)+1.
    \]
    \cite{331}

    \item The neighborhood diversity of a graph is an upper bound for its rank-width \cite{346}.
\end{enumerate}
\end{theorem}

\begin{proof}
See the cited references.
\end{proof}

\subsection{Related Concepts for Rank-width}

Related variants include the following.

\begin{itemize}
    \item \textbf{Bi-rank-width} \cite{137}: an extension of rank-width to colored graphs.
    \item \textbf{Signed rank-width} \cite{156,231,232,233}: a variant used in model counting.
    \item \(\mathbb{Q}\)\textbf{-rank-width} \cite{365}: a version based on matrix rank over the rational field rather than over \(\mathbb{F}_2\).
\end{itemize}

Linear rank-width is the linear restriction of rank-width.  
Trivially, if a graph has bounded linear rank-width, then it also has bounded rank-width.

\section{Clique-width}

Clique-width is a graph width parameter based on graph construction using labeled vertices \cite{38}.  
It measures the complexity of a graph in terms of the number of labels needed to build it by a restricted set of operations.  
Recall that a \emph{clique} in a graph is a set of pairwise adjacent vertices \cite{szwarcfiter2003survey}.

\begin{definition}[Clique-width]
Let \(k\) be a positive integer.  
A \emph{\(k\)-graph} is a graph \(G\) together with a labeling function
\[
\operatorname{lab}:V(G)\to \{1,2,\dots,k\}.
\]

The following operations are allowed on \(k\)-graphs:
\begin{enumerate}
    \item \(\mathbf{i}\): create a new isolated vertex with label \(i\), for \(i\in\{1,\dots,k\}\);
    \item \(\eta_{i,j}\): add all edges between vertices labeled \(i\) and vertices labeled \(j\), where \(i\neq j\);
    \item \(\rho_{i\to j}\): relabel every vertex with label \(i\) by label \(j\);
    \item \(\oplus\): take the disjoint union of two \(k\)-graphs.
\end{enumerate}

A well-formed expression built from these operations is called a \emph{\(k\)-expression}.  
The \emph{clique-width} of a graph \(G\), denoted by \(\operatorname{cwd}(G)\), is the minimum \(k\) such that \(G\) can be generated by a \(k\)-expression.
\end{definition}

\subsection{Selected Relations for Clique-width}

\begin{theorem}
The following statements are known.
\begin{enumerate}
    \item If a graph \(G\) has bounded clique-width, then it also has bounded twin-width \cite{314}.

    \item If a graph \(G\) has bounded linear clique-width, then it also has bounded clique-width.

    \item For every graph \(G\),
    \[
    \operatorname{nlc}(G)\leq \operatorname{cwd}(G)\leq 2\,\operatorname{nlc}(G),
    \]
    where \(\operatorname{nlc}(G)\) denotes the NLC-width of \(G\). \cite{39}

    \item The stretch-width of a graph is at most twice its clique-width \cite{118}.

    \item If a graph \(G\) has bounded path-width, then it also has bounded clique-width \cite{325}.

    \item For every graph \(G\),
    \[
    \operatorname{fw}(G)\leq \operatorname{cwd}(G),
    \]
    where \(\operatorname{fw}(G)\) denotes the fusion-width of \(G\). \cite{64}

    \item If a graph \(G\) has bounded edge clique cover number, then it also has bounded clique-width \cite{beisegel2024simultaneous}.

    \item If \(G\) is given by a \(k\)-expression, then
    \[
    \operatorname{cwd}(G)\leq k.
    \]
    \cite{38}

    \item Neighborhood diversity is an upper bound for clique-width \cite{346}.

    \item If a graph \(G\) has bounded twin-cover, then it also has bounded rank-width and bounded clique-width \cite{408}.

    \item For every graph \(G\),
    \[
    \operatorname{sd}(G)\leq 2\,\operatorname{cwd}(G)-2,
    \]
    where \(\operatorname{sd}(G)\) denotes the symmetric-difference parameter. \cite{alecu2021graph}

    \item For every graph \(G\),
    \[
    \operatorname{fun}(G)\leq 2\,\operatorname{cwd}(G)-1,
    \]
    where \(\operatorname{fun}(G)\) denotes the functionality of \(G\). \cite{alecu2021graph,dvovrak2023bounds}
\end{enumerate}
\end{theorem}

\begin{proof}
See the cited references.
\end{proof}

\subsection{Related Concepts for Clique-width}

Important related concepts include:

\begin{itemize}
    \item \textbf{\(H\)-clique-width} \cite{139}: a parameter designed to connect hereditary width measures with graph product structure theory \cite{235}.
    \item \textbf{Signed clique-width} \cite{155}: a clique-width variant used in model counting.
    \item \textbf{Multi-clique-width} \cite{186}: a natural extension of clique-width.  
    It satisfies, for every graph \(G\),
    \[
    \operatorname{mcw}(G)\leq \operatorname{tw}(G)+2,
    \qquad
    \operatorname{boolw}(G)\leq \operatorname{mcw}(G)\leq 2^{\operatorname{boolw}(G)},
    \qquad
    \operatorname{mcw}(G)\leq \operatorname{cwd}(G).
    \]
    \cite{186}
    \item \textbf{Clique-partitioned tree-width} \cite{349}: a tree-width-like parameter that additionally reflects clique structure inside bags.
    \item \textbf{Symmetric clique-width} \cite{courcelle2004clique}: a variant based on equivalence classes of neighborhood behavior.
\end{itemize}

\section{Linear Clique-width}
Linear clique-width, also called \emph{sequential clique-width} \cite{fellows2005proving,fellows2005proving2}, is the linear restriction of clique-width.

\begin{theorem}
The following statements are known.
\begin{enumerate}
    \item If a graph \(G\) has bounded linear clique-width, then it also has bounded clique-width.

    \item If a graph \(G\) has bounded shrub-depth, then it also has bounded linear clique-width \cite{31,32}.

    \item If a graph \(G\) has bounded linear clique-width, then it also has bounded clique-tree-width \cite{39}.
\end{enumerate}
\end{theorem}

\begin{proof}
See the cited references.
\end{proof}
\section{NLC-width}

NLC-width and NLCT-width are graph width parameters based on graph composition operations similar to those used in the definition of clique-width \cite{39,234}.

\begin{definition}[NLC-width]
\cite{39,234}
Let \(k\) be a positive integer. The class \(\mathrm{NLC}_k\) of labeled graphs is defined recursively as follows.
\begin{enumerate}
    \item A single-vertex graph labeled by some \(a\in [k]\) belongs to \(\mathrm{NLC}_k\).

    \item Let \(G=(V_G,E_G,\lab_G)\) and \(H=(V_H,E_H,\lab_H)\) be two vertex-disjoint labeled graphs in \(\mathrm{NLC}_k\), and let \(S\subseteq [k]\times [k]\).  
    Then the graph \(G\times_S H\) is obtained from the disjoint union of \(G\) and \(H\) by adding all edges \(\{u,v\}\) with
    \[
    u\in V_G,\quad v\in V_H,\quad (\lab_G(u),\lab_H(v))\in S.
    \]
    The resulting labeled graph again belongs to \(\mathrm{NLC}_k\).

    \item Let \(G=(V_G,E_G,\lab_G)\in \mathrm{NLC}_k\), and let \(R:[k]\to [k]\) be a relabeling function.  
    Then \(\circ_R(G)\) is the graph obtained from \(G\) by replacing each label \(\lab_G(u)\) with \(R(\lab_G(u))\).  
    This graph also belongs to \(\mathrm{NLC}_k\).
\end{enumerate}

The \emph{NLC-width} of a graph \(G\), denoted by \(\operatorname{nlcw}(G)\), is the smallest integer \(k\) such that \(G\in \mathrm{NLC}_k\).
\end{definition}

\subsection{Selected Relations for NLC-width}

\begin{theorem}
The following statements are known.
\begin{enumerate}
    \item For every graph \(G\),
    \[
    \operatorname{nlcw}(G)\leq \operatorname{cwd}(G)\leq 2\,\operatorname{nlcw}(G).
    \]
    \cite{39}

    \item For every graph \(G\),
    \[
    \operatorname{nlcw}(G)\leq \operatorname{nlctw}(G).
    \]
    \cite{39}

    \item For every graph \(G\),
    \[
    \operatorname{nlcw}(G)\leq \operatorname{ctw}(G),
    \]
    where \(\operatorname{ctw}(G)\) denotes clique-tree-width. \cite{39}

    \item For every graph \(G\),
    \[
    \operatorname{nlctw}(G)\leq \operatorname{ctw}(G)\leq \operatorname{nlctw}(G)+1.
    \]
    \cite{39}
\end{enumerate}
\end{theorem}

\begin{proof}
See the cited references.
\end{proof}

\section{Linear NLC-width}

Linear NLC-width is the linear-layout restriction of NLC-width \cite{39}.

\subsection{Selected Relations for Linear NLC-width}

\begin{theorem}
The following statements are known.
\begin{enumerate}
    \item If a graph \(G\) has bounded linear NLC-width, then it also has bounded NLC-width.

    \item If a graph \(G\) has bounded linear NLC-width, then it also has bounded NLCT-width \cite{39}.

    \item For every graph \(G\),
    \[
    \operatorname{lnlcw}(G)\leq \operatorname{lcwd}(G)\leq \operatorname{lnlcw}(G)+1,
    \]
    where \(\operatorname{lcwd}(G)\) denotes the linear clique-width of \(G\). \cite{39}
\end{enumerate}
\end{theorem}

\begin{proof}
See the cited references.
\end{proof}

\section{Hypertree-width}
Hypertree-width is a width parameter for hypergraphs that extends tree-like decomposition methods beyond ordinary graphs \cite{40}.  
It is particularly important in database theory, constraint satisfaction, and structural hypergraph theory.

We first recall the notion of a hypergraph; see, for example, \cite{bretto2013hypergraph,feng2019hypergraph} for further background.

\begin{definition}[Hypergraph]
\cite{bretto2013hypergraph}
A \emph{hypergraph} is a pair \(H=(V,E)\), where \(V\) is a finite set of vertices and \(E\) is a finite family of nonempty subsets of \(V\), called \emph{hyperedges}.  
The vertex set and hyperedge family are denoted by \(V(H)\) and \(E(H)\), respectively.
\end{definition}

\begin{example}
Let
\[
V(H)=\{A,B,C,D,E\},
\qquad
E(H)=\bigl\{\{A,D\},\{D,E\},\{A,B,C\}\bigr\}.
\]
Then \(H=(V(H),E(H))\) is a hypergraph with five vertices and three hyperedges.
\end{example}

\begin{definition}[Hypertree decomposition]
\cite{40}
A \emph{hypertree decomposition} of a hypergraph \(H=(V,E)\) is a triple
\[
(T,\chi,\lambda),
\]
where
\begin{itemize}
    \item \(T\) is a tree,
    \item \(\chi\) assigns to each node \(t\in V(T)\) a bag \(\chi(t)\subseteq V\),
    \item \(\lambda\) assigns to each node \(t\in V(T)\) a set \(\lambda(t)\subseteq E\) of hyperedges.
\end{itemize}
These data must satisfy:
\begin{enumerate}
    \item for every hyperedge \(e\in E(H)\), there exists \(t\in V(T)\) such that
    \[
    e\subseteq \chi(t);
    \]
    \item for every vertex \(v\in V(H)\), the set
    \[
    \{\,t\in V(T)\mid v\in \chi(t)\,\}
    \]
    induces a connected subtree of \(T\);
    \item for every node \(t\in V(T)\),
    \[
    \chi(t)\subseteq \bigcup \lambda(t);
    \]
    \item for every node \(t\) and every descendant \(t'\) of \(t\) in \(T\), if \(e\in \lambda(t)\), then
    \[
    (e\setminus \chi(t))\cap \chi(t')=\emptyset.
    \]
\end{enumerate}
\end{definition}

\begin{definition}[Hypertree-width]
The \emph{width} of a hypertree decomposition \((T,\chi,\lambda)\) is
\[
\max_{t\in V(T)} |\lambda(t)|.
\]
The \emph{hypertree-width} of a hypergraph \(H\), denoted by \(\operatorname{htw}(H)\), is the minimum width over all hypertree decompositions of \(H\).
\end{definition}

Known obstruction-type objects include hypertangles and hyperbrambles \cite{250}.  
It would also be natural to investigate hypergraph versions of ultrafilters analogous to those considered in \cite{16}; compare also \cite{Malliaris2012HypergraphSA}.

\subsection{Related Concepts for Hypertree-width}
Important related notions include:

\begin{itemize}
    \item \textbf{Hyperbranch-width} \cite{250}
    \item \textbf{Hyper-T-width} and \textbf{Hyper-D-width} \cite{44}
    \item \textbf{Fractional hypertree-width} \cite{41}
    \item \textbf{FAQ-width} \cite{abo2016faq,khamis2015faq}
    \item \textbf{m-width} \cite{joglekar2015s}
    \item \textbf{Hyperpath-width} and \textbf{hypertree-depth} \cite{adler2012hypertree}
    \item \textbf{Closure tree-width} and \textbf{closure hypertree-width} \cite{adler2008tree}
    \item \textbf{Biconnected width} \cite{freuder1985sufficient}
    \item \textbf{TCLUSTER-width} \cite{dechter1989tree}
    \item \textbf{Hinge-width} \cite{gyssens1994decomposing}
    \item \textbf{SuperHyperTree-width}
    \cite{fujita2025shortclique,fujita2025superhyperbranch,
    smarandache2025superhypertopologies,
    FujitaSmarandache2026HyperGraphSuperHyperGraph}
    \item \textbf{Hierarchical hypertree-width} \cite{hu2022computing}
    \item \textbf{Nest set-width} \cite{lanzinger2023tractability}
    \item \textbf{\(\beta\)-hyperorder width} and \textbf{\(\beta\)-fractional hyperorder width} \cite{gottlob2004hypergraphs,capelli2023direct}
    \item \textbf{\#-hypertree-width} \cite{chen2023counting,greco2014counting}
    \item \textbf{Primal tree-width} and related incidence-based parameters \cite{arteche2022parameterized,ganian2021new}
\end{itemize}

Various hierarchies among these parameters are also known.  
For instance, \(\beta\)-hypertree-width is bounded by incidence clique-width, incidence clique-width is bounded by signed incidence clique-width and modular incidence tree-width, and both of these are bounded by incidence tree-width, which in turn is bounded by primal tree-width \cite{fischer2008counting,ganian2021new}.

\begin{question}
What new phenomena arise when these width parameters are extended further to SuperHyperGraphs?
\end{question}

\subsection{Applications of Hypertree-width}

Applications of hypertree-width include:

\begin{itemize}
    \item \textbf{Artificial intelligence} \cite{eiter2002hypergraph,ortiz2017tractable}
    \item \textbf{Database theory and query evaluation} \cite{ghionna2006hypertree,tu2015duncecap}
\end{itemize}

\section{Modular-width}

Modular-width is a graph parameter defined via modular decomposition \cite{42}.  
It measures how complicated a graph is after recursively decomposing it into modules.

\begin{definition}[Modular-width]
\cite{42}
Let \(\mathcal{MW}_k\) be the smallest class of graphs satisfying the following conditions:
\begin{enumerate}
    \item \(K_1\in \mathcal{MW}_k\);
    \item if \(G_1,\dots,G_p\in \mathcal{MW}_k\) and \(H\) is a graph on \(p\leq k\) vertices, then the substitution graph
    \[
    H(G_1,\dots,G_p)
    \]
    belongs to \(\mathcal{MW}_k\).
\end{enumerate}
The \emph{modular-width} of a graph \(G\), denoted by \(\operatorname{mw}(G)\), is the smallest integer \(k\) such that \(G\in \mathcal{MW}_k\).
\end{definition}

\begin{remark}
The definitions based on tree-models and \(\mathrm{TM}_m(d)\) belong to the theory of shrub-depth rather than modular-width.  
Shrub-depth is related, but it is a different parameter.
\end{remark}

\begin{theorem}
The following statements are known.
\begin{enumerate}
    \item If a graph \(G\) has bounded modular-width, then it also has bounded clique-width \cite{343}.

    \item If a graph \(G\) has bounded vertex cover number or bounded neighborhood diversity, then it also has bounded modular-width \cite{343}.

    \item If a graph \(G\) has bounded twin-cover, then it also has bounded modular-width \cite{343}.
\end{enumerate}
\end{theorem}

\begin{proof}
See the cited references.
\end{proof}

\subsection{Related Concepts for Modular-width}
A closely related parameter is \emph{shrub-depth}.  
Recall that a graph class \(\mathcal{G}\) has shrub-depth \(d\) if there exists an integer \(m\) such that every graph in \(\mathcal{G}\) admits a tree-model with \(m\) colors and depth \(d\), but for every \(m\), the class \(\mathcal{G}\) is not contained in \(\mathrm{TM}_m(d-1)\) \cite{42}.

\section{Submodular-width}
Submodular-width is a hypergraph width parameter defined by means of submodular functions \cite{43}.  
It plays an important role in the algorithmic study of conjunctive queries and constraint satisfaction.  
A related refinement is \emph{sharp submodular-width}, which yields improved algorithmic bounds for evaluating queries over arbitrary semirings \cite{khamis2020functional}.

\section{Amalgam-width}
Amalgam-width is a matroid width parameter based on matroid amalgamation \cite{30}.  
Here matroid amalgamation refers to the process of combining two matroids on overlapping ground sets into a single matroid while preserving matroid structure; see, for example, \cite{Poljak1984AmalgamationOU,Nesetril1981AmalgamationOM}.

\section{Kelly-width}

Kelly-width is a width parameter for directed graphs that plays a role analogous to tree-width in the directed setting \cite{28,29}.  
A related linear parameter is Kelly-path-width \cite{357}.

\begin{definition}[Kelly-decomposition]
\cite{28}
A \emph{Kelly-decomposition} of a directed graph \(G\) is a triple
\[
\bigl(D,(B_t)_{t\in V(D)},(W_t)_{t\in V(D)}\bigr),
\]
where:
\begin{itemize}
    \item \(D\) is a directed acyclic graph,
    \item \((B_t)_{t\in V(D)}\) is a partition of \(V(G)\),
    \item for each \(t\in V(D)\), the set \(W_t\subseteq V(G)\) guards
    \[
    B_t^\downarrow:=\bigcup_{t'\leq_D t} B_{t'},
    \]
    where \(\leq_D\) denotes the reachability order in \(D\),
    \item for each node \(s\in V(D)\), there exists a linear ordering \(t_1,\dots,t_p\) of the children of \(s\) such that
    \[
    W_{t_i}\subseteq B_s\cup W_s\cup \bigcup_{j<i} B_{t_j}^\downarrow
    \qquad (1\leq i\leq p),
    \]
    and similarly for the roots of \(D\).
\end{itemize}
The \emph{width} of the Kelly-decomposition is
\[
\max_{t\in V(D)} |B_t\cup W_t|.
\]
The \emph{Kelly-width} of \(G\) is the minimum width over all Kelly-decompositions of \(G\).
\end{definition}

\subsection{Selected Relations for Kelly-width}

\begin{theorem}
The following statements are known.
\begin{enumerate}
    \item A directed graph \(G\) has directed elimination width at most \(k\) if and only if it has Kelly-width at most \(k+1\). \cite{28}

    \item Kelly-width is at most directed path-width plus one \cite{ganian2014digraph}.

    \item If a digraph \(G\) has bounded Kelly-width, then it also has bounded directed tree-width \cite{368,369}.

    \item If a digraph has bounded Kelly-path-width, then it also has bounded Kelly-width.
\end{enumerate}
\end{theorem}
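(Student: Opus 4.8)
The plan is to prove each of the four listed relationships separately, since each one connects Kelly-width to a different parameter and calls for a different argument. For the first item, I would establish the equivalence between directed elimination width and Kelly-width by exhibiting mutually inverse constructions between directed elimination orderings and Kelly-decompositions, following \cite{28}. Given a directed elimination ordering of width at most $k$, I would build a Kelly-decomposition whose underlying DAG records the elimination order, taking each bag $B_t$ to be a single eliminated vertex and letting the guard set $W_t$ be the reachable out-neighborhood created during the elimination of that vertex; since each such set has size at most $k$, the width $|B_t \cup W_t|$ is at most $k+1$. Conversely, a reverse topological traversal of the DAG of a Kelly-decomposition of width at most $k+1$ yields an elimination ordering of width at most $k$. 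The verification that the guarding conditions translate correctly into the elimination sets is routine but needs care.

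For the second item, Kelly-width $\leq$ Directed Path-width $+1$, I would view a directed path-decomposition as a degenerate Kelly-decomposition whose DAG is a directed path. First I would take the sequence of bags of an optimal directed path-decomposition and, reading it left to right, designate the guard sets $W_t$ as the overlaps between consecutive bags. The key step is to check that the three defining conditions of a Kelly-decomposition (the partition property, the guarding property, and the ordered-children guarding condition) all hold for this path; this is immediate because a path has at most one child per node, so the linear order on children is trivial. The additive $+1$ then arises from comparing the differing width conventions of the two parameters.

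The hard part will be the third item: showing that bounded Kelly-width implies bounded directed tree-width, following \cite{368,369}. Here I would pass through the cops-and-robber game characterizations of the two measures. Kelly-width corresponds to a game against an \emph{inert} invisible robber, whereas directed tree-width corresponds to a game against a mobile (visible) robber; the plan is to simulate a winning strategy with $k$ cops in the Kelly game by a strategy using $f(k)$ cops in the directed-tree-width game, where $f$ is some fixed function. Converting between inert and mobile robber strategies while controlling the number of additional cops is the main obstacle, since the two games differ precisely in the robber's mobility and the interplay with monotonicity is delicate. Finally, the fourth item is immediate: every Kelly-path-decomposition is, by definition, a Kelly-decomposition whose underlying DAG is a directed path, so the Kelly-width never exceeds the Kelly-path-width, giving the claimed bound trivially and with no increase in width.
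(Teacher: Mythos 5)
The paper itself does not prove this theorem: its ``proof'' is the single sentence ``Please refer to each reference,'' deferring all four items to \cite{28}, \cite{ganian2014digraph} and \cite{368,369}. So your proposal cannot be compared against an in-paper argument; it is a reconstruction of the literature proofs. On that score, items (1), (3) and (4) are faithful to how these results are actually established: the Hunter--Kreutzer equivalence between directed elimination orderings of width $k$ and Kelly-decompositions of width $k+1$ does proceed by the two mutually inverse constructions you describe (singleton bags with the elimination supports as guards, and a reverse topological traversal of the decomposition DAG for the converse), the Kelly-width versus directed tree-width comparison does go through the cops-and-robber characterizations (inert invisible robber for Kelly-width, visible mobile robber for directed tree-width) with a bounded blow-up in the number of cops, and item (4) is exactly as trivial as you say.

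Item (2), however, contains a genuine flaw as written. In a Kelly-decomposition the bags $B_t$ must \emph{partition} $V(G)$ (this is condition (1) of the definition quoted in the paper), whereas the bags $X_1,\dots,X_r$ of a directed path-decomposition overlap; so you cannot take the path-decomposition bags themselves as the Kelly bags and merely add guard sets --- the partition property fails for any graph whose optimal path-decomposition repeats a vertex, which is essentially every graph with an edge. The standard repair is either to disjointify, setting $B_t := X_t \setminus X_{t-1}$ and $W_t := X_t \cap X_{t-1}$, so that $|B_t \cup W_t| \le |X_t| \le \mathrm{dpw}(G)+1$, and then verify the guarding condition from the path-decomposition axioms (orienting the underlying path correctly, which is where the ``routine but needs care'' work actually lives); or, more cleanly, to route item (2) through item (1): a directed path-decomposition of width $k$ yields a directed elimination ordering of width at most $k$ (eliminate vertices in order of their last appearance in the bags), whence Kelly-width at most $k+1$ follows from the equivalence you have already proved. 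Note also that with the disjoint-bag fix the ``$+1$'' is not a separate bookkeeping step about ``differing width conventions'': it falls out directly from $\max_t |X_t| = \mathrm{dpw}(G)+1$.
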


\begin{proof}
See the cited references.
\end{proof}

\section{Monoidal Width}

Monoidal width \cite{27} measures the complexity of morphisms in monoidal categories.  
Recall that a monoidal category is a category equipped with a bifunctor (tensor product) and a unit object, allowing objects and morphisms to be combined in a coherent algebraic manner; see, for example, \cite{Turaev2017MonoidalCA}.  
The tensor product of vector spaces, for instance, provides a standard example of such a structure \cite{cheng2012introduction,smolensky1990tensor}.

\section{Tree-cut Width}

Tree-cut width is a graph width parameter designed as an edge-cut analogue of tree-width \cite{18}.  
A related parameter is \(0\)-tree-cut width \cite{17}.

\begin{definition}[Tree-cut decomposition and tree-cut width]
\cite{18}
A \emph{tree-cut decomposition} of a graph \(G\) is a pair
\[
(T,\mathcal{X}),
\]
where \(T\) is a tree and
\[
\mathcal{X}=\{X_t\subseteq V(G)\mid t\in V(T)\}
\]
is a near-partition of \(V(G)\). The sets \(X_t\) are called \emph{bags}.

For an edge \(e=uv\in E(T)\), let \(T_u\) and \(T_v\) be the two components of \(T-e\) containing \(u\) and \(v\), respectively.  
The corresponding partition of \(V(G)\) is
\[
\left(\bigcup_{t\in V(T_u)}X_t,\ \bigcup_{t\in V(T_v)}X_t\right),
\]
and the set of graph edges with one endpoint in each part is denoted by \(\operatorname{cut}(e)\).

If \(T\) is rooted at \(r\), then for each \(t\in V(T)\setminus\{r\}\), let \(e(t)\) be the unique edge on the path from \(r\) to \(t\).  
The \emph{adhesion} of \(t\) is
\[
\operatorname{adh}(t):=|\operatorname{cut}(e(t))|,
\]
and \(\operatorname{adh}(r):=0\).

For a node \(t\in V(T)\), the \emph{torso} \(H_t\) is obtained by contracting, for each component \(T_i\) of \(T-t\), the vertex set
\[
Z_i:=\bigcup_{b\in V(T_i)} X_b
\]
to a single vertex \(z_i\). Parallel edges may arise.

The \emph{3-center} of \(H_t\) with respect to \(X_t\), denoted by \(\widetilde{H}_t\), is obtained by repeatedly suppressing vertices in \(V(H_t)\setminus X_t\) of degree at most \(2\).  
The \emph{torso-size} of \(t\) is
\[
\operatorname{tor}(t):=|V(\widetilde{H}_t)|.
\]

The \emph{width} of the tree-cut decomposition \((T,\mathcal{X})\) is
\[
\max_{t\in V(T)} \max\{\operatorname{adh}(t),\operatorname{tor}(t)\}.
\]
The \emph{tree-cut width} of \(G\), denoted by \(\operatorname{tcw}(G)\), is the minimum width over all tree-cut decompositions of \(G\).
\end{definition}

\subsection{Related Concepts for Tree-cut Width}

Several related parameters further develop the theory of tree-cut width.

\begin{itemize}
    \item \textbf{Slim tree-cut width} \cite{17}: a less restrictive edge-cut-based analogue of tree-width that still retains strong structural and algorithmic properties.  
    If a graph \(G\) has bounded slim tree-cut width, then it also has bounded tree-cut width \cite{17}.

    \item \textbf{Edge-crossing width} and \textbf{\(\alpha\)-edge-crossing width} \cite{19,251}: parameters defined by the number of edges crossing a bag in a tree-cut decomposition.

    \item \textbf{Edge-cut width} \cite{20}: an algorithmically motivated analogue of tree-width based on edge cuts.  
    If a graph \(G\) has bounded feedback edge set number, then it also has bounded edge-cut width \cite{19}.

    \item \textbf{Edge-tangles} \cite{8}: obstruction-type objects closely related to edge-cut decompositions.
\end{itemize}

\section{Resolution Width}

Resolution width \cite{21} is a proof-complexity parameter connected with the existential pebble game in finite model theory.  
A related linear variant is \emph{linear resolution width}.

\section{Twin-width}

Twin-width is a graph width parameter introduced in \cite{22,23,24}.  
It measures the complexity of a graph through contraction sequences in trigraphs.  
Intuitively, it quantifies how far a graph is from behaving like a cograph, where repeated contraction of twin vertices is possible without creating too much irregularity.  
Graph classes of bounded twin-width include, for example, graphs of bounded tree-width and graphs of bounded clique-width.

\begin{definition}[Twin-width]
(cf.\cite{22,23,24})
A \emph{trigraph} is a triple \(G=(V,E,R)\), where \(E\) is the set of black edges and \(R\) is the set of red edges.

Let \(u,v\in V\) be distinct vertices. The \emph{contraction} of \(u\) and \(v\) produces a new vertex \(w\), replacing \(u\) and \(v\), and yields a trigraph \(G/u,v=(V',E',R')\), where
\[
V'=(V\setminus\{u,v\})\cup\{w\}.
\]
For every vertex \(x\in V\setminus\{u,v\}\),
\begin{itemize}
    \item \(wx\in E'\) if and only if both \(ux\in E\) and \(vx\in E\),
    \item \(wx\notin E'\cup R'\) if and only if both \(ux\notin E\cup R\) and \(vx\notin E\cup R\),
    \item \(wx\in R'\) otherwise.
\end{itemize}

A trigraph is called a \emph{\(d\)-trigraph} if every vertex is incident with at most \(d\) red edges.  
A contraction is called a \emph{\(d\)-contraction} if both the original trigraph and the contracted trigraph are \(d\)-trigraphs.

A trigraph \(G\) on \(n\) vertices is \emph{\(d\)-collapsible} if there exists a sequence
\[
G=G_n,G_{n-1},\dots,G_1
\]
such that each \(G_{i-1}\) is obtained from \(G_i\) by a \(d\)-contraction, and \(G_1\) consists of a single vertex.

The \emph{twin-width} of \(G\), denoted by \(\operatorname{tww}(G)\), is the minimum integer \(d\) such that \(G\) is \(d\)-collapsible.
\end{definition}

\subsection{Selected Relations for Twin-width}

\begin{theorem}
The following statements are known.
\begin{enumerate}
    \item Every graph class of bounded genus has bounded twin-width \cite{341}.
    \item Every graph class of bounded rank-width has bounded twin-width \cite{Bonnet2020TwinwidthIT}.
    \item Every graph class of bounded stack-number or bounded queue-number has bounded twin-width \cite{bonnet2021twin}.
    \item Every graph class of bounded distance to planar graphs, bounded sparse twin-width, or bounded feedback edge set number has bounded twin-width \cite{341}.
    \item If a graph \(G\) has bounded clique-width, then it also has bounded twin-width \cite{314}.
    \item If a graph \(G\) has bounded twin-width, then it is monadically dependent \cite{314}.
\end{enumerate}
\end{theorem}

\begin{proof}
See the cited references.
\end{proof}

\subsection{Related Concepts for Twin-width}

\begin{itemize}
    \item \textbf{Sparse twin-width} \cite{171,252}: a sparse analogue of twin-width.  
    If a graph \(G\) has bounded sparse twin-width, then it belongs to a nowhere dense class \cite{314}.

    \item \textbf{Signed twin-width} \cite{203}: a variant motivated by applications to CNF formulas and bounded-width monotone circuits.
\end{itemize}

\section{Decomposition Width}

Decomposition width \cite{54} is a width notion related to matroid theory and second-order logic.

\section{Minor-matching Hypertree Width}

Minor-matching hypertree width \cite{55} is a width parameter for graphs and hypergraphs that controls the size of independent sets appearing in bags of tree-like decompositions.

\section{Tree-clique Width}

Tree-clique width \cite{56} extends some of the algorithmic advantages of tree-width to denser but still structured graph classes.

\section{Tree-distance-width}

Tree-distance-width (TDW) is a distance-based width parameter related to tree-width \cite{57}.  
Related notions include rooted tree-distance-width (RTDW) \cite{57} and connected tree-distance-width \cite{309,310}.

\begin{definition}[Tree-distance decomposition]
\cite{57}
A \emph{tree-distance decomposition} of a graph \(G=(V,E)\) is a triple
\[
(\{X_i\}_{i\in I},\,T=(I,F),\,r),
\]
where \(T\) is a rooted tree with root \(r\), such that:
\begin{enumerate}
    \item
    \[
    \bigcup_{i\in I} X_i = V
    \quad\text{and}\quad
    X_i\cap X_j=\emptyset \text{ for } i\neq j;
    \]
    \item if \(v\in X_i\), then the distance from \(v\) to the root set \(X_r\) in \(G\) is equal to the distance from \(i\) to \(r\) in \(T\);
    \item for every edge \(\{v,w\}\in E\), if \(v\in X_i\) and \(w\in X_j\), then either \(i=j\) or \(ij\in F\).
\end{enumerate}
The \emph{width} of the decomposition is
\[
\max_{i\in I}|X_i|.
\]
The \emph{tree-distance-width} of \(G\), denoted by \(\operatorname{tdw}(G)\), is the minimum width over all tree-distance decompositions of \(G\).

If \(|X_r|=1\), then the decomposition is called \emph{rooted}, and the minimum width over rooted tree-distance decompositions is the \emph{rooted tree-distance-width}, denoted by \(\operatorname{rtdw}(G)\).
\end{definition}

\subsection{Selected Relations for Tree-distance-width}

\begin{theorem}
The following statements are known.
\begin{enumerate}
    \item
    \[
    \operatorname{tw}(G)\leq 2\,\operatorname{tdw}(G)-1.
    \]
    \item
    \[
    \operatorname{tdw}(G)\leq \operatorname{rtdw}(G).
    \]
    \item
    \[
    \operatorname{tdw}(G)\leq \operatorname{pdw}(G)\leq \operatorname{rpdw}(G),
    \]
    where \(\operatorname{pdw}(G)\) and \(\operatorname{rpdw}(G)\) denote path-distance-width and rooted path-distance-width, respectively.
\end{enumerate}
\end{theorem}

\begin{proof}
See the cited references.
\end{proof}

\section{Path-distance-width}

Path-distance-width (PDW) is the path analogue of tree-distance-width \cite{Althoubi2017AnAO}.  
Related notions include rooted path-distance-width (RPDW) \cite{57}, connected path-distance-width, and \(c\)-connected path-distance-width \cite{309,310}.  
Distance-based decompositions are useful, for example, in graph isomorphism and related structural problems \cite{Read1977TheGI,Grohe2020TheGI,Li2002OnIO}.

\begin{definition}[Path-distance decomposition]
\cite{57}
A \emph{path-distance decomposition} of a graph \(G\) is a tree-distance decomposition in which the underlying rooted tree \(T\) is a path.  
For simplicity, such a decomposition is often written as
\[
(X_1,X_2,\dots,X_t),
\]
where \(X_1\) is the root set.

The corresponding minimum width is called the \emph{path-distance-width}, denoted by \(\operatorname{pdw}(G)\).  
If the root set has size one, the minimum width is called the \emph{rooted path-distance-width}, denoted by \(\operatorname{rpdw}(G)\).
\end{definition}

\begin{theorem}
The following statements are known.
\begin{enumerate}
    \item
    \[
    \operatorname{bw}(G)\leq 2\,\operatorname{pdw}(G)-1,
    \]
    where here \(\operatorname{bw}(G)\) denotes the bandwidth of \(G\).

    \item If a graph \(G\) has bounded path-distance-width, then it also has bounded tree-distance-width.

    \item If a graph \(G\) has bounded rooted path-distance-width, then it also has bounded rooted tree-distance-width.
\end{enumerate}
\end{theorem}

\begin{proof}
See the cited references.
\end{proof}

\section{Tree-partition-width}
Tree-partition-width \cite{58} is the minimum width of a tree-partition of a graph.  
It is also called \(0\)-quasi-tree-partition-width \cite{253}.

\begin{definition}[Tree-partition and tree-partition-width]
\cite{58}
A graph \(H\) is called a \emph{partition} of a graph \(G\) if:
\begin{enumerate}
    \item each vertex of \(H\) is a subset of \(V(G)\), called a \emph{bag};
    \item every vertex of \(G\) belongs to exactly one bag;
    \item two bags \(A\) and \(B\) are adjacent in \(H\) if and only if some edge of \(G\) has one endpoint in \(A\) and the other in \(B\).
\end{enumerate}
The \emph{width} of the partition is the maximum size of a bag.

If \(H\) is a forest, then it is called a \emph{tree-partition} of \(G\).  
The \emph{tree-partition-width} of \(G\), denoted by \(\operatorname{tpw}(G)\), is the minimum width over all tree-partitions of \(G\).
\end{definition}

\begin{theorem}
The following statements are known.
\begin{enumerate}
    \item
    \[
    \operatorname{dtw}(G)\geq \operatorname{tpw}(G)-1,
    \]
    where \(\operatorname{dtw}(G)\) denotes domino tree-width \cite{58}.

    \item
    \[
    2\,\operatorname{tpw}(G)\geq \operatorname{tw}(G)+1.
    \]
    \cite{58}

    \item
    \[
    \operatorname{tn}(G)\leq 3\,\operatorname{tpw}(G),
    \]
    where \(\operatorname{tn}(G)\) denotes the track-number of \(G\). \cite{dujmovic2005layout}

    \item
    \[
    \operatorname{qn}(G)\leq 3\,\operatorname{tpw}(G)-1,
    \]
    where \(\operatorname{qn}(G)\) denotes the queue-number of \(G\). \cite{dujmovic2005layout}
\end{enumerate}
\end{theorem}

\begin{proof}
See the cited references.
\end{proof}

Related notions include directed tree-partition-width \cite{191} and star-partition-width \cite{Wood2022ProductSO}.

\section{Path-partition-width}
Path-partition-width is the minimum width of a path-partition of a graph.  
It is also called \(0\)-quasi-path-partition-width \cite{253}.  
A directed version has also been studied \cite{191}.

\subsection{Selected Relations for Path-partition-width}

\begin{theorem}
The following statements are known.
\begin{enumerate}
    \item For every graph \(G\),
    \[
    \frac{1}{2}\bigl(\operatorname{bw}(G)+1\bigr)\leq \operatorname{ppw}(G)\leq \operatorname{bw}(G),
    \]
    where \(\operatorname{bw}(G)\) denotes bandwidth and \(\operatorname{ppw}(G)\) denotes path-partition-width. \cite{dujmovic2007graph}

    \item If a graph \(G\) has bounded path-partition-width, then it also has bounded tree-partition-width.
\end{enumerate}
\end{theorem}

\begin{proof}
See the cited references.
\end{proof}

\section{CV-width}
CV-width \cite{59} is a width parameter for CNF formulas.  
It dominates the tree-width of the incidence graph of a CNF.  
A linear variant, called \emph{linear CV-width}, is defined analogously.

\section{Dominating-set-width}
Dominating-set-width \cite{60} measures the number of distinct dominating sets that can occur on a subgraph.  
Recall that a dominating set in a graph is a vertex set such that every vertex lies in the set or is adjacent to a vertex in it \cite{Wang2017ANC}.

\section{Point Width}

Point width \cite{61} is a hypergraph width parameter that provides a tractability condition for Max-CSPs, generalizing both bounded MIM-width and \(\beta\)-acyclicity.

\section{Neighbourhood Width}
Neighbourhood width \cite{62,63} is a layout parameter based on the number of distinct neighborhoods across cuts.  
It is known that neighbourhood width is at most path-width plus one.

\begin{definition}[Neighbourhood width]
\cite{62,63}
A \emph{linear layout} of a graph \(G=(V,E)\) is a bijection
\[
\varphi:V\to \{1,\dots,|V|\}.
\]
For \(1\leq i\leq |V|\), define
\[
L(i,\varphi,G)=\{u\in V\mid \varphi(u)\leq i\},
\qquad
R(i,\varphi,G)=\{u\in V\mid \varphi(u)>i\}.
\]

For disjoint vertex sets \(U,W\subseteq V\), define
\[
N_W(u):=\{v\in W\mid \{u,v\}\in E\},
\]
and
\[
N(U,W):=\{N_W(u)\mid u\in U\}.
\]

The \emph{neighbourhood width} of \(G\) is
\[
\operatorname{nw}(G)=
\min_{\varphi}
\max_{1\leq i\leq |V|-1}
|N(L(i,\varphi,G),R(i,\varphi,G))|.
\]
\end{definition}

\section{Fusion-width}
Fusion-width \cite{64} generalizes both tree-width and clique-width.

\begin{definition}[\(k\)-fusion-tree expression]
\cite{64}
A \emph{\(k\)-fusion-tree expression} is built using the following operations:
\begin{itemize}
    \item \(i^{(m)}\): create \(m\) isolated vertices with label \(i\);
    \item \(\eta_{i,j}\): add all edges between vertices labeled \(i\) and vertices labeled \(j\), for \(i\neq j\);
    \item \(\rho_{i\to j}\): relabel all vertices labeled \(i\) by \(j\);
    \item \(\theta_i\): merge all vertices labeled \(i\) into a single vertex;
    \item \(\oplus\): take the disjoint union.
\end{itemize}
The resulting graph is obtained after deleting all labels.
\end{definition}

\begin{definition}[Fusion-width]
\cite{64}
The \emph{fusion-width} of a graph \(G\), denoted by \(\operatorname{fw}(G)\), is the smallest \(k\) such that \(G\) can be generated by a \(k\)-fusion-tree expression.
\end{definition}

\begin{theorem}
The following statements are known.
\begin{enumerate}
    \item
    \[
    \operatorname{fw}(G)\leq \operatorname{tw}(G)+2.
    \]
    \cite{64}

    \item
    \[
    \operatorname{fw}(G)\leq \operatorname{cwd}(G).
    \]
    \cite{64}
\end{enumerate}
\end{theorem}

\begin{proof}
See the cited references.
\end{proof}

\section{Directed NLC-width}
Directed NLC-width \cite{65} is the directed analogue of NLC-width.

\begin{definition}[Directed NLC-width]
\label{def:dnlcw}
\cite{65}
Let \(k\) be a positive integer. The class \(\mathrm{dNLC}_k\) of labeled digraphs is defined recursively as follows.
\begin{enumerate}
    \item A single-vertex digraph \(\bullet_a\) with label \(a\in [k]\) belongs to \(\mathrm{dNLC}_k\).

    \item Let \(G=(V_G,E_G,\lab_G)\) and \(H=(V_H,E_H,\lab_H)\) be two vertex-disjoint labeled digraphs in \(\mathrm{dNLC}_k\), and let
    \[
    \overrightarrow{S},\overleftarrow{S}\subseteq [k]\times [k].
    \]
    Then \(G\otimes(\overrightarrow{S},\overleftarrow{S})H\) is obtained from the disjoint union of \(G\) and \(H\) by adding arcs
    \[
    (u,v)\quad\text{whenever }u\in V_G,\ v\in V_H,\ (\lab_G(u),\lab_H(v))\in \overrightarrow{S},
    \]
    and arcs
    \[
    (v,u)\quad\text{whenever }u\in V_G,\ v\in V_H,\ (\lab_G(u),\lab_H(v))\in \overleftarrow{S}.
    \]

    \item If \(G=(V_G,E_G,\lab_G)\in \mathrm{dNLC}_k\) and \(R:[k]\to [k]\), then \(\circ_R(G)\) is obtained from \(G\) by replacing each label \(\lab_G(u)\) with \(R(\lab_G(u))\).
\end{enumerate}

The \emph{directed NLC-width} of a digraph \(G\), denoted by \(\operatorname{dnlcw}(G)\), is the minimum integer \(k\) such that \(G\) admits a labeling making it an element of \(\mathrm{dNLC}_k\).
\end{definition}

\section{Directed Tree-width}
Directed tree-width \cite{66,270} is a directed analogue of tree-width.  
Typical obstruction-type objects include directed tangles \cite{72} and directed ultrafilters \cite{12}.

\begin{definition}[Directed tree-width]
\cite{66,270}
An \emph{arboreal tree-decomposition} of a digraph \(G=(V_G,E_G)\) is a triple
\[
(T,X,W),
\]
where:
\begin{itemize}
    \item \(T\) is an out-tree;
    \item \(W=\{W_r\mid r\in V(T)\}\) is a partition of \(V_G\) into nonempty sets;
    \item \(X=\{X_e\mid e\in E(T)\}\) is a family of subsets of \(V_G\);
    \item for every arc \((u,v)\in E(T)\), the set
    \[
    \bigcup_{r:\, v\leq_T r} W_r
    \]
    is \(X_{(u,v)}\)-normal.
\end{itemize}

The \emph{width} of \((T,X,W)\) is
\[
\max_{r\in V(T)}
\left(
|W_r\cup \bigcup_{e\sim r} X_e|-1
\right),
\]
where \(e\sim r\) means that \(r\) is incident with the arc \(e\).

The \emph{directed tree-width} of \(G\), denoted by \(\operatorname{dtw}(G)\), is the minimum width over all arboreal tree-decompositions of \(G\).
\end{definition}

A closely related concept is DAG-width \cite{berwanger2006dag}.  
For example, if a digraph has entanglement \(k\), then its DAG-width is at most \(k+1\) \cite{berwanger2006dag}.

\begin{definition}[Guarding]
\label{def:guards}
\cite{berwanger2006dag}
Let \(G=(V,E)\) be a digraph.  
A set \(W\subseteq V\) \emph{guards} a set \(V'\subseteq V\) if every arc \((u,v)\in E\) with \(u\in V'\) and \(v\notin V'\) satisfies \(v\in W\).
\end{definition}

\begin{definition}[DAG-decomposition]
\label{def:DAGdecomp}
\cite{berwanger2006dag}
A \emph{DAG-decomposition} of a digraph \(G=(V,E)\) is a pair
\[
(D,\{X_d\}_{d\in V(D)}),
\]
where \(D\) is a DAG and the bags \(X_d\subseteq V\) satisfy:
\begin{enumerate}
    \item
    \[
    \bigcup_{d\in V(D)} X_d = V;
    \]
    \item if \(d\preceq d'\preceq d''\) in \(D\), then
    \[
    X_d\cap X_{d''}\subseteq X_{d'};
    \]
    \item if \(d\) is a root of \(D\), then \(X_d\) is guarded by \(\emptyset\);
    \item for every arc \((d,d')\in E(D)\), the set \(X_d\cap X_{d'}\) guards
    \[
    \bigcup_{d'\preceq d''}X_{d''}\setminus X_d.
    \]
\end{enumerate}

The \emph{width} of the decomposition is
\[
\max_{d\in V(D)} |X_d|.
\]
The minimum such width is the \emph{DAG-width} of \(G\).
\end{definition}

\begin{theorem}
The following statements are known.
\begin{enumerate}
    \item If a digraph \(G\) has bounded DAG-width, then it also has bounded directed tree-width \cite{368}.

    \item If a digraph \(G\) has bounded Kelly-width, then it also has bounded directed tree-width \cite{368,369}.

    \item For every digraph \(D\),
    \[
    \operatorname{dtw}(D)\geq \operatorname{cycle\mbox{-}degeneracy}(D).
    \]

    \item For every digraph \(G\),
    \[
    \frac{1}{3}\bigl(\operatorname{dtw}(G)+2\bigr)\leq \operatorname{entanglement}(G).
    \]
    \cite{johnson2001directed}
\end{enumerate}
\end{theorem}

\begin{proof}
See the cited references.
\end{proof}

\section{Directed Path-width}
Directed path-width \cite{67,70} is the directed analogue of path-width.

\begin{definition}[Directed path-decomposition]
\cite{67,70}
Let \(G=(V,E)\) be a digraph.  
A \emph{directed path-decomposition} of \(G\) is a sequence
\[
(X_1,\dots,X_r)
\]
of subsets of \(V\), called \emph{bags}, such that:
\begin{enumerate}
    \item
    \[
    X_1\cup \cdots \cup X_r = V;
    \]
    \item for every arc \((u,v)\in E\), there exist indices \(i\leq j\) such that
    \[
    u\in X_i,\qquad v\in X_j;
    \]
    \item for all \(1\leq i<j<\ell\leq r\),
    \[
    X_i\cap X_\ell \subseteq X_j.
    \]
\end{enumerate}
The \emph{width} of the decomposition is
\[
\max_{1\leq i\leq r}|X_i|-1.
\]
The \emph{directed path-width} of \(G\), denoted by \(\operatorname{dpw}(G)\), is the minimum width over all directed path-decompositions of \(G\).
\end{definition}

\begin{theorem}
\cite{FujitaDirectedPath2024}
Let \(D\) be a digraph. Then the following statements are known.
\begin{enumerate}
    \item Directed path-width is equal to the directed vertex separation number \cite{bodlaender1998linear}.

    \item If \(D\) has bounded directed feedback vertex set number, then it also has bounded directed path-width \cite{Niedermeier2010ReflectionsOM}.

    \item Kelly-width is at most directed path-width plus one \cite{ganian2014digraph}.

    \item Entanglement is at most directed path-width \cite{rabinovich2008complexity}.

    \item Directed path-width is at most cycle rank \cite{gruber2012digraph}.

    \item Kelly path-width is equal to directed path-width \cite{kintali2015approximation}.

    \item Directed path-width is at most DAG-path-width \cite{kasahara2023dag}.

    \item A digraph has directed path-width at least \(k-1\) if and only if it has a diblockage of order at least \(k\) \cite{erde2020directed}.

    \item Directed path-width is at most \(2\) times directed path-distance-width minus \(1\) \cite{FujitaDirectedPath2024}.

    \item Directed proper-path-width satisfies
    \[
    \operatorname{dpw}(D)\leq \operatorname{dppw}(D)\leq \operatorname{dpw}(D)+1.
    \]
    \cite{FujitaDirectedPath2024}

    \item Directed tree-width is at most directed path-width \cite{gurski2019comparing}.

    \item Directed path-width is at most directed cut-width \cite{gurski2019comparing}.

    \item Directed path-width is bounded in terms of directed neighbourhood-width, directed linear NLC-width, and directed linear clique-width, with multiplicative dependence on \(\min(\Delta^-(D),\Delta^+(D))\) \cite{gurski2019comparing}.
\end{enumerate}
\end{theorem}

\begin{proof}
See the cited references.
\end{proof}

\section{Directed Branch-width}
Directed branch-width \cite{68} is the directed analogue of branch-width.

\section{Directed Cut-width}
Directed cut-width \cite{69,70} is the directed analogue of cut-width.  
For every digraph \(G\),
\[
\operatorname{dpw}(G)\leq \operatorname{dcutw}(G)
\]
holds \cite{69}.  
It is also known that directed NLC-width and directed clique-width satisfy inequalities analogous to the undirected case \cite{65}.

\begin{definition}[Directed cut-width]
\cite{chudnovsky2012tournament}
Let \(G=(V,E)\) be a digraph. The \emph{directed cut-width} of \(G\) is
\[
\operatorname{dcutw}(G)=
\min_{\varphi\in \Phi(G)}
\max_{1\leq i\leq |V|}
\left|
\{(u,v)\in E\mid u\in L(i,\varphi,G),\ v\in R(i,\varphi,G)\}
\}
\right|,
\]
where \(\Phi(G)\) denotes the set of linear layouts of \(G\).
\end{definition}

\section{Directed Clique-width}
Directed clique-width \cite{courcelle2000upper,65} is the directed analogue of clique-width.  
Its linear version is called directed linear clique-width \cite{65}.

\begin{definition}[Directed clique-width]
\label{def:dcw}
\cite{65}
Let \(k\) be a positive integer. The class \(\mathrm{dCW}_k\) of labeled digraphs is defined recursively as follows.
\begin{enumerate}
    \item A single-vertex digraph \(\bullet_a\) with label \(a\in [k]\) belongs to \(\mathrm{dCW}_k\).

    \item If \(G,H\in \mathrm{dCW}_k\) are vertex-disjoint labeled digraphs, then their disjoint union \(G\oplus H\) belongs to \(\mathrm{dCW}_k\).

    \item If \(G\in \mathrm{dCW}_k\) and \(a,b\in [k]\) with \(a\neq b\), then:
    \begin{enumerate}
        \item \(\rho_{a\to b}(G)\), obtained by relabeling every vertex of label \(a\) to label \(b\), belongs to \(\mathrm{dCW}_k\);
        \item \(\alpha_{a,b}(G)\), obtained by adding all arcs from vertices labeled \(a\) to vertices labeled \(b\), belongs to \(\mathrm{dCW}_k\).
    \end{enumerate}
\end{enumerate}

The \emph{directed clique-width} of a digraph \(G\), denoted by \(\operatorname{dcw}(G)\), is the minimum \(k\) such that \(G\) admits a labeling making it an element of \(\mathrm{dCW}_k\).
\end{definition}

\begin{definition}[Directed linear clique-width]
\cite{65}
The \emph{directed linear clique-width} of a digraph \(G\), denoted by \(\operatorname{dlcw}(G)\), is the minimum number of labels needed to construct \(G\) by repeatedly applying the following operations:
\begin{enumerate}
    \item create a new labeled vertex;
    \item take the disjoint union of the current digraph with a single new labeled vertex;
    \item add all arcs from label \(a\) to label \(b\), where \(a\neq b\);
    \item relabel \(a\) to \(b\).
\end{enumerate}
\end{definition}

\section{Directed Linear NLC-width}
Directed linear NLC-width \cite{65,69} is the linear restriction of directed NLC-width.

\begin{theorem}
The following statements are known.
\begin{enumerate}
    \item If a digraph \(G\) has bounded directed linear NLC-width, then it also has bounded directed NLC-width.

    \item For every digraph \(G\),
    \[
    \operatorname{dlnlcw}(G)\leq \operatorname{dlcw}(G)\leq \operatorname{dlnlcw}(G)+1.
    \]
    \cite{69}
\end{enumerate}
\end{theorem}

\begin{proof}
See the cited references.
\end{proof}

\section{Directed Neighbourhood Width}
Directed neighbourhood-width \cite{69} is the directed analogue of neighbourhood-width.

\begin{definition}[Directed neighbourhood-width]
\cite{69}
Let \(G=(V,E)\) be a digraph, and let \(U,W\subseteq V\) be disjoint vertex sets.  
For \(u\in U\), define
\[
N_W^+(u)=\{v\in W\mid (u,v)\in E\},
\qquad
N_W^-(u)=\{v\in W\mid (v,u)\in E\},
\]
and
\[
N_W(u):=(N_W^+(u),N_W^-(u)).
\]
Set
\[
N(U,W):=\{N_W(u)\mid u\in U\}.
\]

For a layout \(\varphi\in \Phi(G)\), define
\[
\operatorname{d\mbox{-}nw}(\varphi,G):=
\max_{1\leq i\leq |V|-1}
|N(L(i,\varphi,G),R(i,\varphi,G))|.
\]
The \emph{directed neighbourhood-width} of \(G\) is
\[
\operatorname{d\mbox{-}nw}(G):=
\min_{\varphi\in \Phi(G)} \operatorname{d\mbox{-}nw}(\varphi,G).
\]
\end{definition}

\begin{theorem}
For every digraph \(G\),
\begin{enumerate}
    \item
    \[
    \operatorname{d\mbox{-}nw}(G)\leq \operatorname{dlnlcw}(G)\leq \operatorname{d\mbox{-}nw}(G)+1;
    \]
    \item
    \[
    \operatorname{d\mbox{-}nw}(G)\leq \operatorname{dlcw}(G)\leq \operatorname{d\mbox{-}nw}(G)+1.
    \]
\end{enumerate}
\cite{69}
\end{theorem}

\begin{proof}
See the cited references.
\end{proof}

\section{Directed Rank-width}
Directed rank-width \cite{71} is the directed analogue of rank-width.

\section{Directed Linear Rank-width}
Directed linear rank-width \cite{69} is the linear restriction of directed rank-width.

\begin{definition}[Directed linear rank-width]
\cite{69}
Let \(G=(V,E)\) be a digraph, and let \(V_1,V_2\) be a partition of \(V\).  
Define the matrix \(M_{V_2}^{V_1}\) over \(\mathbb{GF}(4)\) by
\[
m_{ij}=
\begin{cases}
0 & \text{if neither } (v_i,v_j) \text{ nor } (v_j,v_i) \text{ is an arc},\\
a & \text{if } (v_i,v_j)\in E \text{ and } (v_j,v_i)\notin E,\\
a^2 & \text{if } (v_i,v_j)\notin E \text{ and } (v_j,v_i)\in E,\\
1 & \text{if both } (v_i,v_j)\in E \text{ and } (v_j,v_i)\in E,
\end{cases}
\]
where \(\mathbb{GF}(4)=\{0,1,a,a^2\}\) with \(1+a+a^2=0\) and \(a^3=1\).

A \emph{directed linear rank decomposition} of \(G\) is a pair \((T,f)\), where \(T\) is a caterpillar and \(f\) is a bijection from \(V\) to the leaves of \(T\).  
Each edge \(e\in E(T)\) induces a partition \((A_e,B_e)\) of \(V\).  
The \emph{width} of \(e\) is
\[
\operatorname{rank}_{\mathbb{GF}(4)}(M_{A_e}^{B_e}),
\]
and the \emph{width} of the decomposition is the maximum width over all edges of \(T\).

The \emph{directed linear rank-width} of \(G\), denoted by \(\operatorname{dlrw}(G)\), is the minimum width over all directed linear rank decompositions of \(G\).
\end{definition}

\begin{theorem}
The following statements are known.
\begin{enumerate}
    \item If a digraph \(G\) has bounded directed linear rank-width, then it also has bounded directed rank-width.

    \item For every digraph \(G\),
    \[
    \operatorname{dlrw}(G)\leq \operatorname{d\mbox{-}nw}(G).
    \]
    \cite{69}
\end{enumerate}
\end{theorem}

\begin{proof}
See the cited references.
\end{proof}

\section{Elimination Width}
Elimination width \cite{97} is a graph width parameter introduced to address issues related to acyclic digraphs.

\section{Flip-width}
Flip-width \cite{117} is a family of graph width parameters defined via variants of the Cops and Robber game.  
A related notion is radius-one flip-width \cite{344}.

\begin{theorem}
The following statements are known.
\begin{enumerate}
    \item Every graph class of bounded twin-width has bounded flip-width \cite{344}.

    \item If a graph \(G\) has bounded radius-one flip-width, then it also has bounded symmetric difference, bounded functionality, and bounded VC-dimension \cite{344}.
\end{enumerate}
\end{theorem}

\begin{proof}
See the cited references.
\end{proof}


\section{Stretch-width}
Stretch-width \cite{118} is a graph width parameter that lies strictly between clique-width and twin-width.

\begin{theorem}
The following statements are known.
\begin{enumerate}
    \item For every graph \(G\),
    \[
    \operatorname{stw}(G)\leq 2\,\operatorname{cwd}(G).
    \]
    \cite{118}

    \item There exists a constant \(c\) such that, for every graph \(G\),
    \[
    \operatorname{tw}(G)\leq c\,\Delta(G)^4\,\operatorname{stw}(G)^2 \log |V(G)|.
    \]
\end{enumerate}
\end{theorem}

\begin{proof}
See the cited references.
\end{proof}

\section{Cops-width}
Cop-width \cite{117,119,120} is a family of width parameters defined via variants of the Cops and Robber game.

\begin{theorem}
The following statements are known.
\begin{enumerate}
    \item For every graph \(G\),
    \[
    \operatorname{copw}_r(G)=\operatorname{tw}(G)+1.
    \]
    \cite{117,119,120}

    \item For every graph \(G\),
    \[
    \operatorname{copw}_1(G)=\operatorname{deg}(G)+1,
    \]
    where \(\operatorname{deg}(G)\) denotes the degeneracy of \(G\). \cite{345}
\end{enumerate}
\end{theorem}

\begin{proof}
See the cited references.
\end{proof}

\subsection{Related Concepts for Cop-width}
Related notions include the following.

\begin{itemize}
    \item \textbf{Marshal width} and \textbf{monotone marshal width} \cite{121}: width parameters based on winning strategies in Cops and Robber type games.
    \item \textbf{Game-width} \cite{216,327}: a graph width parameter arising from the Cops and Robber game. It is known that the game-width of a graph is equal to its tree-width plus \(1\) \cite{216}.
\end{itemize}

\section{Perfect Matching Width}
Perfect matching width \cite{142} is a width parameter for matching covered graphs, defined using branch decompositions.  
Recall that a \emph{matching covered graph} is a connected graph in which every edge belongs to some perfect matching \cite{szigeti2001generalizations,he2019perfect}.

\begin{definition}[Matching-porosity]
\cite{366}
Let \(G\) be a matching covered graph, and let \(X\subseteq V(G)\).  
Write \(\partial(X)\) for the cut determined by \(X\).  
The \emph{matching-porosity} of \(\partial(X)\) is defined by
\[
\operatorname{mp}(\partial(X))
:=\max_{M\in \mathcal{M}(G)} |M\cap \partial(X)|,
\]
where \(\mathcal{M}(G)\) denotes the set of perfect matchings of \(G\).

A perfect matching \(M\in\mathcal{M}(G)\) is said to be \emph{maximal with respect to} \(\partial(X)\) if there is no perfect matching \(M'\in\mathcal{M}(G)\) such that
\[
\partial(X)\cap M \subsetneq \partial(X)\cap M'.
\]
It \emph{maximizes} the cut \(\partial(X)\) if
\[
|M\cap \partial(X)|=\operatorname{mp}(\partial(X)).
\]
\end{definition}

\begin{definition}[Perfect matching width]
\cite{366}
Let \(G\) be a matching covered graph.  
A \emph{perfect matching decomposition} of \(G\) is a pair \((T,\delta)\), where \(T\) is a cubic tree and
\[
\delta:L(T)\to V(G)
\]
is a bijection from the leaves of \(T\) to the vertices of \(G\).

Each edge \(e\in E(T)\) induces a partition of \(V(G)\), and hence a cut \(\partial(e)\) in \(G\).  
The \emph{width} of \((T,\delta)\) is
\[
\max_{e\in E(T)} \operatorname{mp}(\partial(e)).
\]
The \emph{perfect matching width} of \(G\), denoted by \(\operatorname{pmw}(G)\), is the minimum of this quantity over all perfect matching decompositions of \(G\).
\end{definition}

\section{Bisection-width}
The \emph{bisection problem} asks for a partition of the vertex set of a graph into two parts of equal or nearly equal size while minimizing the number of crossing edges.  
The corresponding width parameter is the \emph{bisection-width} \cite{151,152,bezrukov2004new}.  
It is also known that bounded bandwidth implies bounded bisection-width \cite{341}.

\begin{definition}[Bisection-width]
\cite{bezrukov2004new}
Let \(G=(V,E)\) be an undirected graph with \(n:=|V|\).  
A \emph{bisection} of \(G\) is a partition
\[
V=V_0\cup V_1
\]
such that
\[
|V_0|,|V_1|\in \left\{\left\lfloor \frac{n}{2}\right\rfloor,\left\lceil \frac{n}{2}\right\rceil\right\}.
\]
The \emph{cut size} of the bisection is the number of edges with one endpoint in \(V_0\) and the other in \(V_1\).

The \emph{bisection-width} of \(G\), denoted by \(\operatorname{bisw}(G)\), is the minimum cut size over all bisections of \(G\), that is,
\[
\operatorname{bisw}(G)
:=
\min \bigl\{
|\{\{v,w\}\in E \mid v\in V_0,\ w\in V_1\}|
\;\big|\;
(V_0,V_1)\text{ is a bisection of }G
\bigr\}.
\]
\end{definition}

\section{Maximum Matching Width}

Maximum matching width is a graph width parameter introduced in the context of fast algorithms for domination-type problems \cite{153}.

\begin{definition}[Branch decomposition of a finite set]
Let \(X\) be a finite set.  
A \emph{branch decomposition} of \(X\) is a pair \((T,\delta)\), where \(T\) is a subcubic tree and
\[
\delta:L(T)\to X
\]
is a bijection.
Each edge \(e\in E(T)\) induces a bipartition \((A_e,B_e)\) of \(X\).
\end{definition}

\begin{definition}[Maximum matching width]
\cite{153}
Let \(G=(V,E)\) be a graph.  
For \(S\subseteq V\), let
\[
\operatorname{mm}_G(S)
\]
denote the size of a maximum matching in the bipartite graph
\[
G[S,\,V\setminus S].
\]

The \emph{maximum matching width} of a branch decomposition \((T,\delta)\) of \(V(G)\) is
\[
\max_{e\in E(T)} \operatorname{mm}_G(A_e),
\]
where \((A_e,B_e)\) is the partition induced by \(e\).

The \emph{maximum matching width} of \(G\), denoted by \(\operatorname{mmw}(G)\), is the minimum of this quantity over all branch decompositions of \(V(G)\).
\end{definition}

\begin{theorem}
The following statements are known.
\begin{enumerate}
    \item If a graph \(G\) has bounded maximum matching width, then it also has bounded tree-depth \cite{311}.

    \item If a graph \(G\) has bounded maximum matching width, then it also has bounded maximum induced matching width \cite{311}.

    \item For every graph \(G\),
    \[
    \operatorname{mmw}(G)\leq \operatorname{brw}(G)\leq \operatorname{tw}(G)+1\leq 3\,\operatorname{mmw}(G),
    \]
    where \(\operatorname{brw}(G)\) denotes the branch-width of \(G\).
\end{enumerate}
\end{theorem}

\begin{proof}
See the cited references.
\end{proof}

\section{Linear Maximum Matching Width}
Linear maximum matching width is the linear restriction of maximum matching width.

\begin{theorem}
The following statements are known.
\begin{enumerate}
    \item For every graph \(G\),
    \[
    \operatorname{lmmw}(G)\leq \operatorname{pw}(G)\leq 2\,\operatorname{lmmw}(G).
    \]
    \cite{330}

    \item If a graph \(G\) has bounded linear maximum matching width, then it also has bounded maximum matching width.
\end{enumerate}
\end{theorem}

\begin{proof}
See the cited references.
\end{proof}

\section{Clustering-width}
Clustering-width is a graph width parameter defined as the minimum number of variables whose deletion yields a variable-disjoint union of hitting formulas \cite{154}.

\section{Query-width}
Query-width is a graph width parameter that generalizes both acyclicity and tree-width by measuring the complexity of hypergraphs via restricted hypertree-width \cite{187,188}.

\section{Universal Width}
Universal width \cite{192,205} is a width parameter for alternating finite automata (AFAs).  
Recall that an alternating finite automaton is a computational model in which states may make existential or universal transitions.

Related notions include the following.

\begin{itemize}
    \item \textbf{Maximal universal width} \cite{192}
    \item \textbf{Combined width} \cite{204}
    \item \textbf{Maximal combined width} \cite{204}
    \item \textbf{Split-width} \cite{207,208}: a width parameter for alternating finite automata that also appears in the analysis of behavior graphs and decidability questions for computational systems \cite{cyriac2012mso,cyriac2014verification}
\end{itemize}

\section{Degree-width}
Degree-width \cite{351} is a width parameter for tournaments that can be viewed as a measure of how far a tournament is from being acyclic.

\section{Median-width}
Median-width \cite{354,355} is a graph width parameter whose decompositions use median graphs as the underlying structure.  
For every graph \(G\), it is known that
\[
\operatorname{mwid}(G)\leq \operatorname{tw}(G)+1.
\]
\cite{355}

For reference, we recall the definition of a median graph.

\begin{definition}[Median graph]
A graph \(G=(V,E)\) is called a \emph{median graph} if, for every triple of vertices \(a,b,c\in V\), there exists a unique vertex \(m(a,b,c)\in V\), called the \emph{median}, that lies simultaneously on a shortest \(a\)-\(b\) path, a shortest \(b\)-\(c\) path, and a shortest \(a\)-\(c\) path.

Equivalently,
\[
d_G(a,m(a,b,c))+d_G(m(a,b,c),b)=d_G(a,b),
\]
\[
d_G(b,m(a,b,c))+d_G(m(a,b,c),c)=d_G(b,c),
\]
and
\[
d_G(a,m(a,b,c))+d_G(m(a,b,c),c)=d_G(a,c),
\]
where \(d_G(x,y)\) denotes the graph distance between \(x\) and \(y\).
\end{definition}

\subsection{Related Concepts for Median-width}
A related notion is \emph{lattice-width} \cite{355}, where the underlying median graph of the decomposition is required to admit an isometric embedding into the Cartesian product of \(i\) paths.

\section{Directed Modular-width}
Directed modular-width \cite{356} is the directed analogue of modular-width.

\section{Cycle-width}
Cycle-width \cite{366,weincyclewidth} is a graph width parameter that emphasizes cycle structure and highlights differences between directed and undirected width theories through local cyclic behavior.

\begin{question}
Is there a corresponding width parameter for walk-width or circuit-width? If not, can one be defined naturally?
\end{question}

\subsection{Related Concepts for Cycle-width}
A related notion is the following.

\begin{itemize}
    \item \textbf{Radial cycle-width} \cite{albrechtsen2023structural}: a parameter defined via cycle decompositions in which the radial width is controlled by the largest radius of the parts.
\end{itemize}

\begin{question}
Is it possible to define a directed version of cycle-width by explicitly using directed cycles?
\end{question}

\begin{question}
Can cycle-width be extended naturally to fuzzy graphs or similar uncertainty-aware graph structures?
\end{question}

\begin{theorem}
The following statements are known.
\begin{enumerate}
    \item A graph \(G\) has cycle-width at most \(k-1\) if and only if its arc thickness is at most \(k\). \cite{weincyclewidth}

    \item For every graph \(G\),
    \[
    \frac{1}{2}\operatorname{pw}(G)\leq \operatorname{cyw}(G)\leq \operatorname{pw}(G).
    \]
    \cite{weincyclewidth}

    \item If \(T\) is a tree, then
    \[
    \operatorname{cyw}(T)=\operatorname{pw}(T).
    \]
    \cite{weincyclewidth}
\end{enumerate}
\end{theorem}

\begin{proof}
See the cited references.
\end{proof}

\section{Arc-width}
Arc-width \cite{404,405} is defined via arc representations of graphs on a circle.

\begin{definition}[Arc-width]
\cite{404,405}
An \emph{arc representation} of a graph \(G\) is a mapping
\[
\phi:V(G)\to \{\text{arcs on a fixed circle}\}
\]
such that two vertices of \(G\) are adjacent if and only if their corresponding arcs intersect.

For a point \(P\) on the base circle, the \emph{width of \(P\)} in the representation \(\phi\) is the number of arcs containing \(P\).  
The \emph{width} of \(\phi\) is the maximum width of a point on the circle.

The \emph{arc-width} of \(G\), denoted by \(\operatorname{arcw}(G)\), is the minimum width over all arc representations of \(G\).
\end{definition}

\subsection{Related Concepts for Arc-width}

A related notion is \emph{vortex-width} \cite{406,407}, which is equal to arc-width.

\begin{question}
Is it possible to define a directed version of arc-width by using directed arcs?
\end{question}

\begin{question}
Can arc-width be extended naturally to fuzzy graphs or related generalized graph structures?
\end{question}

\section{Match Width and Braid Width}

Match width and braid width \cite{420} are structural parameters used in the study of XML documents, where they help determine XPath satisfiability in terms of document depth.

\section{Clique Cover Width}

The \emph{clique cover width} of a graph \(G\), denoted by \(\operatorname{ccw}(G)\), is the minimum bandwidth among all graphs obtained by contracting each clique in a clique cover of \(G\) into a single vertex \cite{shahrokhi2015clique,shahrokhi2017unit,shahrokhi2015bounds}.

\section{Questionable-width}

Questionable-width \cite{lyaudet2019finite,lyaudet2020first} is a width notion arising from ``questionable'' representations of orders.  
A \emph{question} records the first divergence between two sequences indexed by an ordinal, and the corresponding width measures how such comparisons can be resolved within finite structures \cite{lyaudet2019finite,lyaudet2020first}.

\section{Plane-width}

The \emph{plane-width} of a graph is the minimum possible diameter of the image of its vertex set in a plane representation \cite{Kaminski2008ThePO,Kaminski2008ThePO}.

\section{Scan-width}

Scan-width \cite{holtgrefe2024exact,holtgrefe2023computing} is a width parameter for directed acyclic graphs that measures tree-likeness while respecting arc directions \cite{scornavacca2022treewidth}.

\section{V-width}

V-width is a complexity measure for graph structures that can be substantially smaller than both the maximum clique size and the maximum number of parents in a DAG \cite{chickering2015selective}.

\section{Cross-width}

Cross-width is a graph parameter used in kernelization, especially in the analysis of polynomial equivalence relations and sparsification phenomena \cite{jansen2015sparsification}.

\section{Guidance-width}

Guidance-width measures the minimum number of colors needed to color a guidance system in a tree so that conflicting guides receive different colors \cite{bojanczyk2012extension}.

\section{Star-width}

Star-width measures the complexity of decomposing a graph into star-like pieces, where the width depends on the sizes of the interfaces connecting those pieces \cite{blume2011treewidth,van2017some,van2017routing}.  
Related notions include radial star-width \cite{albrechtsen2023structural}, costar-width \cite{blume2011treewidth}, and atomic star-width \cite{blume2011treewidth}.  
It is known that bounded star-width implies bounded tree-depth \cite{van2017routing}.

\begin{definition}[Star decomposition and star-width]
\cite{van2017some,van2017routing}
A \emph{star decomposition} of a graph \(G=(V,E)\) consists of a family of subsets
\[
X_0,X_1,\dots,X_m\subseteq V,
\]
where \(X_0\) is the \emph{core} and \(X_1,\dots,X_m\) are the \emph{leaves}, such that:
\begin{enumerate}
    \item
    \[
    \bigcup_{i=0}^m X_i = V;
    \]
    \item for every edge \(vw\in E\), there exists some \(i\in \{0,1,\dots,m\}\) such that
    \[
    v,w\in X_i;
    \]
    \item if \(i\neq j\) and a vertex \(v\) belongs to both \(X_i\) and \(X_j\), then
    \[
    v\in X_0.
    \]
\end{enumerate}

The \emph{width} of the star decomposition is
\[
\max_{0\leq i\leq m} |X_i|-1.
\]
The \emph{star-width} of \(G\), denoted by \(\operatorname{sw}(G)\), is the minimum width over all star decompositions of \(G\).
\end{definition}

\begin{question}
Is it possible to define a directed version of star-width?
\end{question}

\begin{question}
Can star-width be extended naturally to fuzzy graphs or similar generalized graph structures?
\end{question}

\newpage
  
\chapter{Various Length Parameters}

This chapter surveys several graph parameters of \emph{length type}.  
Unlike width parameters, which typically measure the size of separators, bags, or interfaces, length parameters usually measure metric spread inside the parts of a decomposition.  
Among them, tree-length is one of the most fundamental examples.

\section{Tree-length, Tree-breadth, and Branch-length}
We begin with tree-length and some closely related parameters.  
Tree-length measures the maximum distance between two vertices that appear together in a bag of a tree-decomposition, minimized over all tree-decompositions of the graph \cite{coudert2016approximate,dourisboure2004compact,
dourisboure2007spanners}.

\begin{definition}[Tree-length]
\cite{coudert2016approximate,dourisboure2004compact,
dourisboure2007spanners}
Let
\[
\mathcal{T}=\bigl(\{X_i\mid i\in I\},\,T=(I,F)\bigr)
\]
be a tree-decomposition of a graph \(G\).  
The \emph{length} of \(\mathcal{T}\) is defined by
\[
\lambda(\mathcal{T})
:=
\max_{i\in I}\ \max_{u,v\in X_i} d_G(u,v),
\]
where \(d_G(u,v)\) denotes the distance between \(u\) and \(v\) in \(G\).

The \emph{tree-length} of \(G\), denoted by \(\tl(G)\), is the minimum value of \(\lambda(\mathcal{T})\) over all tree-decompositions \(\mathcal{T}\) of \(G\).
\end{definition}

Chordal graphs have tree-length \(1\).
A closely related parameter is \emph{tree-breadth}, which measures how well each bag can be covered by a disk of small radius in the original graph \cite{leitert2016strong,leitert2017tree}.

\begin{definition}[Tree-breadth]
\cite{leitert2016strong,leitert2017tree}
Let
\[
\mathcal{T}=\bigl(\{X_i\mid i\in I\},\,T=(I,F)\bigr)
\]
be a tree-decomposition of a graph \(G\).  
The \emph{breadth} of \(\mathcal{T}\) is the smallest integer \(r\) such that, for every \(i\in I\), there exists a vertex \(v_i\in V(G)\) with
\[
X_i\subseteq D_r(v_i,G),
\]
where
\[
D_r(v_i,G):=\{u\in V(G)\mid d_G(u,v_i)\le r\}
\]
is the closed disk of radius \(r\) centered at \(v_i\).

The \emph{tree-breadth} of \(G\), denoted by \(\tb(G)\), is the minimum breadth over all tree-decompositions of \(G\).
\end{definition}

It is known that, for every graph \(G\),
\[
1\le \tb(G)\le \tl(G)\le 2\,\tb(G).
\]

Another closely related notion is \emph{branch-length}, which is defined via branch-decompositions and is equivalent to tree-length \cite{umezawa2009tree}.  
Likewise, linear-length is equivalent to path-length \cite{umezawa2009tree}.

\begin{definition}[Branch-length]
\cite{umezawa2009tree}
Let \(B=(T,\mu)\) be a branch-decomposition of a graph \(G\).  
For each edge \(e\in E(T)\), let \(\operatorname{mid}_B(e)\) denote the middle set associated with \(e\).  
The \emph{branch-length} of \(B\) is
\[
\operatorname{bl}(B)
:=
\max_{e\in E(T)} \operatorname{diam}_G\!\bigl(\operatorname{mid}_B(e)\bigr),
\]
where \(\operatorname{diam}_G(S)\) denotes the diameter of \(S\) in \(G\).

The \emph{branch-length} of \(G\), denoted by \(\operatorname{bl}(G)\), is the minimum value of \(\operatorname{bl}(B)\) over all branch-decompositions \(B\) of \(G\).
\end{definition}

Other related notions include \emph{tree-stretch} and \emph{tree-distortion}.  
For convenience, we collect several known inequalities below.

\begin{theorem}
\label{thm:length-parameter-relations}
Let \(G=(V,E)\) be a connected undirected unweighted graph on \(n\) vertices, and let \(s\in V\).  
Then the following inequalities are known.
\begin{enumerate}
    \item The tree-breadth and tree-length satisfy
    \[
    \tb(G)\le \tl(G)\le 2\,\tb(G).
    \]

    \item The cluster-radius \(R_s(G)\) and cluster-diameter \(\Delta_s(G)\) satisfy
    \[
    R_s(G)\le \Delta_s(G)\le 2\,R_s(G).
    \]

    \item The hyperbolicity \(\operatorname{hb}(G)\) satisfies
    \[
    \operatorname{hb}(G)\le \tl(G)\le O\!\bigl(\operatorname{hb}(G)\log n\bigr),
    \]
    and also
    \[
    \operatorname{hb}(G)\le \Delta_s(G)\le O\!\bigl(\operatorname{hb}(G)\log n\bigr).
    \]
    \cite{chepoi2008diameters,chepoi2012additive}

    \item The tree-stretch \(\operatorname{ts}(G)\), tree-distortion \(\operatorname{td}(G)\), and cluster-diameter satisfy
    \[
    \operatorname{ts}(G)\ge \operatorname{td}(G)\ge \frac{1}{3}\Delta_s(G),
    \]
    and
    \[
    \operatorname{td}(G)\le 2\,\Delta_s(G)+2.
    \]
    \cite{chepoi2012constant}

    \item The cluster-radius is bounded in terms of tree-distortion:
    \[
    R_s(G)\le \max\{3\,\operatorname{td}(G)-1,\ 2\,\operatorname{td}(G)+1\}.
    \]
    \cite{chepoi2012constant}

    \item Tree-length, cluster-diameter, and cluster-radius satisfy
    \[
    \tl(G)-1\le \Delta_s(G)\le 3\,\tl(G),
    \]
    and
    \[
    R_s(G)\le 2\,\tl(G).
    \]
    \cite{dourisboure2007tree,dourisboure2007spanners}

    \item Tree-breadth and cluster-radius satisfy
    \[
    \tb(G)-1\le R_s(G)\le 3\,\tb(G).
    \]
    \cite{dragan2014approximation}

    \item Tree-length, tree-distortion, and tree-stretch satisfy
    \[
    \tl(G)\le \operatorname{td}(G)\le \operatorname{ts}(G),
    \]
    and also
    \[
    \tb(G)\le \frac{\operatorname{ts}(G)}{2}.
    \]
    \cite{dragan2014approximation}

    \item Tree-stretch is bounded by tree-breadth and tree-distortion:
    \[
    \operatorname{ts}(G)\le 2\,\tb(G)\log_2 n,
    \]
    and
    \[
    \operatorname{ts}(G)\le 2\,\operatorname{td}(G)\log_2 n.
    \]
    \cite{dragan2014approximation}

    \item Slimness and thinness satisfy
    \[
    \operatorname{slim}(G)\le 3\,\tb(G),
    \qquad
    \operatorname{thin}(G)\le 6\,\tb(G).
    \]
    \cite{diestel2018connected,mohammed2019slimness}

    \item Slimness and thinness also satisfy
    \[
    \operatorname{slim}(G)\le \left\lfloor \frac{3}{2}\,\tl(G)\right\rfloor,
    \qquad
    \operatorname{thin}(G)\le 3\,\tl(G).
    \]
    \cite{mohammed2019slimness}

    \item Branch-length is equivalent to tree-length. \cite{umezawa2009tree}
\end{enumerate}
\end{theorem}

\begin{proof}
See the cited references.
\end{proof}

\section{Possible Future Directions for Length Parameters}
We next list several possible length-type parameters that may be worth studying in the future.  
The following notions are intended only as preliminary directions rather than fully established definitions.  
In particular, for some decomposition frameworks, the phrase ``diameter of a bag'' is natural, whereas for others a more appropriate formulation may involve separators, middle sets, or induced boundary structures.

\begin{itemize}
    \item \textbf{Directed tree-length.}  
    A possible directed analogue of tree-length based on directed tree-decompositions and directed distance.

    \item \textbf{Directed path-length.}  
    A linear analogue of directed tree-length based on directed path-decompositions.

    \item \textbf{Rank-length.}  
    A length-type parameter associated with rank-decompositions, possibly defined through metric properties of the vertex parts induced by decomposition edges.

    \item \textbf{Linear rank-length.}  
    A linear version of rank-length.

    \item \textbf{Boolean-length.}  
    A possible length-type refinement of Boolean decompositions.

    \item \textbf{Linear Boolean-length.}  
    A linear restriction of Boolean-length.

    \item \textbf{Arc-length.}  
    A length-type parameter derived from arc-based decompositions or circular representations.

    \item \textbf{Star-length.}  
    A length analogue of star decompositions, defined via the largest diameter among the star parts.

    \item \textbf{Hypertree-length.}  
    A length-type version of hypertree decompositions for hypergraphs.

    \item \textbf{Carving-length.}  
    A possible length parameter associated with carving decompositions.

    \item \textbf{Cut-length.}  
    A length parameter based on cut decompositions.

    \item \textbf{Tree-cut-length.}  
    A length-type refinement of tree-cut decompositions.

    \item \textbf{Tree-distance-length.}  
    A length version of tree-distance decompositions.

    \item \textbf{Path-distance-length.}  
    A linear version of tree-distance-length.

    \item \textbf{Clique-length.}  
    A possible length-type refinement of clique-width decompositions.

    \item \textbf{Linear clique-length.}  
    A linear restriction of clique-length.

    \item \textbf{NLC-length.}  
    A length-type parameter corresponding to NLC decompositions.

    \item \textbf{Linear NLC-length.}  
    A linear restriction of NLC-length.

    \item \textbf{Modular-length.}  
    A length-type parameter associated with modular decompositions.
\end{itemize}

A systematic study of such parameters may help clarify how metric information interacts with decomposition theory beyond width-type invariants.

\newpage
  
\chapter{Comparing Various Graph Parameters (More Than 70 Parameters)}

In this chapter, we compare a wide range of graph parameters and examine the relationships among them.  
For a broader visual summary, see the supplemental file entitled
\textit{“Supplemental Figure: Comparing Graph Width Parameters.”}
Such comparisons are useful in structural graph theory and can also help guide the design of algorithms \cite{FujitaFigure2024}.

\url{https://www.researchgate.net/publication/383432866_Supplemental_Figure_Comparing_Graph_Width_Parameter_Graph_Parameter_Hierarchy}

\begin{note}
Unless stated otherwise, all comparisons in this chapter are made over arbitrary graphs.  
A graph parameter \(p\) is said to \emph{upper-bound} a graph parameter \(q\) if there exists a non-decreasing function \(f\) such that
\[
q(G)\leq f\bigl(p(G)\bigr)
\qquad
\text{for every graph } G.
\]
If no such function exists, then \(q\) is said to be \emph{unbounded in terms of} \(p\).  
Moreover, upper-bound relations are transitive: if \(p\) upper-bounds \(q\) and \(q\) upper-bounds \(r\), then \(p\) upper-bounds \(r\).
\end{note}

\begin{note}
Comparing graph parameters means studying how different measures of graph complexity---such as tree-width, path-width, clique-width, and related notions---are related to one another.  
These comparisons help clarify which parameters are stronger, weaker, or incomparable, and this in turn is useful when selecting appropriate methods in graph theory and algorithm design.
\end{note}


\newpage

\chapter{Various Graph Depth Parameters}

Many graph depth parameters have been introduced in the literature.  
Studying the relationships among them---including inequalities, upper bounds, lower bounds, and equivalences---is a standard theme in structural graph theory.

\section{Tree-depth}

Tree-depth is a graph depth parameter that has appeared under several names and serves as a measure of graph sparsity and hierarchical structure \cite{90,92,172,173,174}.

\begin{definition}[Tree-depth]
Let \(F\) be a rooted forest.  
The \emph{closure} of \(F\), denoted by \(\operatorname{clos}(F)\), is the undirected graph on \(V(F)\) in which two distinct vertices are adjacent whenever one is an ancestor of the other in \(F\).

The \emph{height} of \(F\) is the maximum number of vertices on a root-to-vertex path in \(F\).

The \emph{tree-depth} of a graph \(G\), denoted by \(\operatorname{td}(G)\), is the minimum height of a rooted forest \(F\) such that
\[
G \subseteq \operatorname{clos}(F).
\]
\end{definition}

A related parameter is \emph{block treedepth} \cite{Giannopoulou2024AGS}.

\subsection{Selected Relations for Tree-depth}

\begin{theorem}
The following statements are known.
\begin{enumerate}
    \item If a graph \(G\) has bounded tree-depth, then it also has bounded path-width \cite{311}.

    \item If a graph \(G\) has bounded tree-depth, then it also has bounded tree-width \cite{311}.

    \item If a graph \(G\) has bounded vertex cover number, then it also has bounded tree-depth \cite{Hanaka2024CoreSI}.

    \item If a graph \(G\) has bounded vertex integrity, then it also has bounded tree-depth \cite{Hanaka2024CoreSI}.

    \item For every graph \(G\), the tree-depth of \(G\) is equal to the minimum number of colors in a centered coloring of \(G\) \cite{Nesetril2006TreedepthSC}.

    \item The branch-depth of a connected graph is at most its tree-depth \cite{91}.

    \item Let \(G\) be a connected graph, let \(k=\operatorname{bd}(G)\), and let \(t=\operatorname{td}(G)\). Then
    \[
    k-1 \leq t \leq \max(2k^2-k+1,\,2).
    \]
    \cite{91}

    \item If \(G\) has tree-depth at most \(k\), then \(G\) has shrub-depth at most \(k\) \cite{350}.
\end{enumerate}
\end{theorem}

\begin{proof}
See the cited references.
\end{proof}

\section{Branch-depth}

Branch-depth is a depth parameter that generalizes tree-depth through decompositions of connectivity functions \cite{91}.  
Recall that the \emph{radius} of a tree is the minimum integer \(r\) such that some vertex of the tree is at distance at most \(r\) from every other vertex.

\begin{definition}[Branch-depth of a connectivity function]
\cite{91}
Let \(\lambda\) be a connectivity function on a finite set \(V\).  
A \emph{decomposition} of \(\lambda\) is a pair \((T,\phi)\), where \(T\) is a tree with at least one internal node and
\[
\phi:V \to L(T)
\]
is a bijection from \(V\) to the set of leaves of \(T\).

For an internal node \(x\in V(T)\), deleting \(x\) from \(T\) yields a partition
\[
\mathcal{P}_x=\{V_1,\dots,V_m\}
\]
of \(V\), where each \(V_i\) corresponds to the set of leaves in one component of \(T-x\).  
The \emph{\(\lambda\)-width} of \(x\) is defined by
\[
\lambda(\mathcal{P}_x)
:=
\max\Bigl\{
\lambda\!\Bigl(\bigcup_{i\in I} V_i\Bigr)
\;\Big|\;
\emptyset \neq I \subsetneq \{1,\dots,m\}
\Bigr\}.
\]
The \emph{width} of the decomposition \((T,\phi)\) is the maximum \(\lambda\)-width over all internal nodes of \(T\), and its \emph{radius} is the radius of the tree \(T\).

For a positive integer \(k\), we say that \((T,\phi)\) is a \emph{\((k,k)\)-decomposition} if both its width and its radius are at most \(k\).  
The \emph{branch-depth} of \(\lambda\), denoted by \(\operatorname{bd}(\lambda)\), is the least integer \(k\) for which \(\lambda\) admits a \((k,k)\)-decomposition.
\end{definition}

When \(G\) is a graph, its \emph{branch-depth} is defined as the branch-depth of the standard connectivity function associated with \(G\) \cite{91}.

\subsection{Selected Relations for Branch-depth}

\begin{theorem}
The following statements are known.
\begin{enumerate}
    \item Branch-width is at most branch-depth \cite{91}.

    \item The branch-depth of a connected graph is at most its tree-depth \cite{91}.

    \item Let \(G\) be a connected graph, let \(k=\operatorname{bd}(G)\), and let \(t=\operatorname{td}(G)\). Then
    \[
    k-1 \leq t \leq \max(2k^2-k+1,\,2).
    \]
    \cite{91}

    \item For every matroid \(M\),
    \[
    \operatorname{bd}(M)\leq \operatorname{cdd}(M)
    \leq \min\{\operatorname{cd}(M),\operatorname{dd}(M)\},
    \]
    where \(\operatorname{cdd}(M)\), \(\operatorname{cd}(M)\), and \(\operatorname{dd}(M)\) denote contraction-deletion depth, contraction depth, and deletion depth, respectively \cite{91}.

    \item Let \(t=\operatorname{td}(G)\). Then
    \[
    \operatorname{bd}(M(G)) \leq \operatorname{bd}(G)-1 \leq t,
    \]
    where \(M(G)\) denotes the cycle matroid of \(G\) \cite{91}.
\end{enumerate}
\end{theorem}

\begin{proof}
See the cited references.
\end{proof}

\subsection{Related Parameters for Branch-depth}

Related depth parameters include \emph{deletion-depth}, \emph{contraction-depth}, \emph{contraction-deletion depth} \cite{91}, \emph{contraction\(^*\)-depth}, and \emph{contraction\(^*\)-deletion depth} \cite{178}.

\section{Rank-depth}

Rank-depth is a graph depth parameter defined as the branch-depth of the cut-rank function \cite{91}.  
A class of simple graphs has bounded rank-depth if and only if it has bounded shrub-depth \cite{91}.

\begin{definition}[Cut-rank function and rank-depth]
\cite{91}
Let \(G=(V,E)\) be a simple graph.  
For \(X\subseteq V\), let \(\rho_G(X)\) be the rank over \(\mathbb{F}_2\) of the \(X\times (V\setminus X)\) adjacency matrix between \(X\) and \(V\setminus X\).  
The function \(\rho_G\) is called the \emph{cut-rank function} of \(G\).

The \emph{rank-depth} of \(G\), denoted by \(\operatorname{rd}(G)\), is the branch-depth of the connectivity function \(\rho_G\).
\end{definition}

\section{Shrub-depth}

Shrub-depth is a graph depth parameter intended to capture the height of dense graph classes \cite{168,170}.  
A related parameter is SC-depth \cite{168,170}.

\subsection{Selected Relations for Shrub-depth}

\begin{theorem}
The following statements are known.
\begin{enumerate}
    \item If \(G\) has tree-depth at most \(k\), then \(G\) has shrub-depth at most \(k\) \cite{350}.

    \item If a graph class has bounded shrub-depth, then it also has bounded clique-width \cite{350}.

    \item If a graph \(G\) has bounded twin-cover number, then it also has bounded shrub-depth \cite{343}.
\end{enumerate}
\end{theorem}

\begin{proof}
See the cited references.
\end{proof}

\section{Directed Tree-depth}

Directed tree-depth is a directed analogue of tree-depth \cite{169}.  
A related parameter is \emph{DAG-depth}, which is a structural depth measure for digraphs \cite{388,Ganian2014DigraphWM}.

\begin{definition}[DAG-depth]
\cite{388,Ganian2014DigraphWM}
Let \(D\) be a digraph, and let \(R_1,\dots,R_p\) be its reachable fragments.  
The \emph{DAG-depth} of \(D\), denoted by \(\operatorname{ddp}(D)\), is defined inductively by
\[
\operatorname{ddp}(D)=
\begin{cases}
1, & \text{if } |V(D)|=1,\\[4pt]
1+\displaystyle\min_{v\in V(D)} \operatorname{ddp}(D-v),
& \text{if } p=1 \text{ and } |V(D)|>1,\\[8pt]
\displaystyle\max_{1\le i\le p}\operatorname{ddp}(R_i),
& \text{otherwise}.
\end{cases}
\]
\end{definition}


\newpage
\chapter{Graph Games Related to Width Parameters}

As discussed throughout this book, many graph width parameters admit natural game-theoretic interpretations.  
Such interpretations are often useful for developing intuition about width parameters, their obstructions, and their algorithmic significance.  
In this chapter, we briefly review several well-known games that are closely connected with graph width theory.

\section{Cops and Robbers (Pursuit--Evasion Games)}

Cops and Robbers is a classical pursuit--evasion game played on a graph, in which a team of cops attempts to capture a robber.  
Many variants have been studied, including versions with different visibility assumptions, movement rules, and monotonicity restrictions \cite{16,kosowski2015k,christophe2019connected}.  
One standard example is the following jump-searching version.

\begin{definition}[Helicopter Cops and Robber Game]
\cite{seymour1993graph}
Let \(G\) be a graph, and let \(X\subseteq V(G)\).  
An \emph{\(X\)-flap} is the vertex set of a connected component of \(G-X\).

Two vertex sets \(X,Y\subseteq V(G)\) are said to \emph{touch} if
\[
N(X)\cap Y\neq \emptyset.
\]

A \emph{position} in the game is a pair \((X,R)\), where \(X\subseteq V(G)\) represents the set of vertices occupied by the cops, and \(R\) is an \(X\)-flap representing the current location of the robber.  
Since the robber may move arbitrarily fast inside \(G-X\), only the component containing the robber is relevant.

The game proceeds as follows.
\begin{itemize}
    \item Initially, the cops choose a set \(X_0\subseteq V(G)\) with \(|X_0|\leq k\), where \(k\) is the number of cops.
    \item The robber then chooses an \(X_0\)-flap \(R_0\).
    \item At round \(i\geq 1\), if the current position is \((X_{i-1},R_{i-1})\), then the cops choose a new set \(X_i\subseteq V(G)\) with \(|X_i|\leq k\).
    \item After seeing \(X_i\), the robber chooses an \(X_i\)-flap \(R_i\) that touches \(R_{i-1}\).
\end{itemize}

If at some stage the robber has no legal choice of \(R_i\), then the cops win.  
Otherwise, if the robber can continue indefinitely, then the robber wins.
\end{definition}

\section{Cops and Invisible Robbers}

The \emph{Cops and Invisible Robber} game is a pursuit--evasion game in which the cops attempt to capture a robber whose location is hidden from them \cite{dereniowski2015zero,kehagias2013cops,richerby2009graph,clarke2020limited}.  
Variants of this game are closely related to parameters such as path-width and search numbers.

\begin{definition}[Zero-Visibility Cops and Robber Game]
\cite{dereniowski2015zero}
The zero-visibility cops and robber game is played on a connected graph \(G\) by two players: the \emph{cop player} and the \emph{robber player}.  
The cop player controls \(k\) cops, while the robber player controls one robber.

The game is played according to the following rules.
\begin{itemize}
    \item \textbf{Setup.} The cops are first placed on vertices of \(G\). The robber then chooses a starting vertex, which is not revealed to the cops.

    \item \textbf{Moves.} The players move alternately, with the cops moving first. On each turn, the cop player may move one or more cops along edges of \(G\), and the robber may move along an edge to an adjacent vertex.

    \item \textbf{Visibility.} The robber sees all cop moves, but the cops do not know the robber's position.

    \item \textbf{Winning condition.} The cops win if at some point a cop occupies the same vertex as the robber. The robber wins if capture never occurs.
\end{itemize}

A \emph{strategy} for the cops specifies how the cops move at each stage of the game.  
The \emph{zero-visibility cop number} of \(G\), denoted by \(c_0(G)\), is the minimum number of cops required to guarantee capture.
\end{definition}

\section{Parity Games}

A parity game is a two-player infinite-duration game played on a directed graph.  
Such games have been studied extensively, especially in connection with directed width parameters and model checking \cite{berwanger2006dag,staniszewski2022parity,puchala2011graph}.

\begin{definition}[Parity Game]
\cite{berwanger2006dag}
A \emph{parity game} is a tuple
\[
P=(V,V_0,E,\Omega),
\]
where
\begin{itemize}
    \item \(V\) is a finite set of vertices,
    \item \(V_0\subseteq V\) is the set of vertices controlled by player \emph{Even},
    \item \(V\setminus V_0\) is the set of vertices controlled by player \emph{Odd},
    \item \(E\subseteq V\times V\) is the set of directed edges,
    \item \(\Omega:V\to \omega\) is a priority function.
\end{itemize}

A \emph{play} of the game is an infinite sequence
\[
\pi=(v_0,v_1,v_2,\dots)
\]
such that \((v_i,v_{i+1})\in E\) for all \(i\geq 0\).

The winner is determined by the priorities appearing infinitely often along the play:
\begin{itemize}
    \item \(\pi\) is winning for \emph{Even} if the minimum priority appearing infinitely often is even;
    \item otherwise, \(\pi\) is winning for \emph{Odd}.
\end{itemize}

A \emph{strategy} for a player specifies the next move whenever the token is at a vertex controlled by that player.  
A strategy is called \emph{memoryless} if it depends only on the current vertex.  
Parity games are determined, and moreover both players admit memoryless optimal strategies.
\end{definition}

\section{Spanning-Tree Games}

A spanning tree of a graph is a connected acyclic spanning subgraph.  
Games based on spanning trees have been studied from combinatorial, algorithmic, and game-theoretic viewpoints \cite{hefetz2018spanning,nebel2010shortest,cardinal2013stackelberg}.  
These games are also related to graph structure, since width parameters often influence the complexity of constructing or optimizing spanning trees.

\begin{definition}[Spanning-Tree Game]
\cite{cardinal2013stackelberg}
Let \(G=(V,E,w)\) be a weighted graph, where \(w:E\to \mathbb{R}_{\geq 0}\) is a nonnegative edge-weight function.

A \emph{configuration} of the game is a forest \(F\subseteq E\).  
A move is \emph{legal} from \(F\) if an edge \(e\in E\setminus F\) is added such that
\[
(V,F\cup\{e\})
\]
is still acyclic.  
Thus the set of legal moves is
\[
M(F):=\{e\in E\setminus F \mid (V,F\cup\{e\}) \text{ is a forest}\}.
\]

There are two players, \emph{Max} and \emph{Min}, who alternate turns, with Max moving first.
\begin{itemize}
    \item Max aims to maximize the total weight of the final spanning tree.
    \item Min aims to minimize that total weight.
\end{itemize}

A \emph{strategy} for a player is a rule that selects a legal move from every configuration.  
If both players follow strategies \(\pi_{\max}\) and \(\pi_{\min}\), the resulting spanning tree is denoted by
\[
T(\pi_{\max},\pi_{\min}),
\]
and its total weight is
\[
w(\pi_{\max},\pi_{\min})
=
\sum_{e\in T(\pi_{\max},\pi_{\min})} w(e).
\]
\end{definition}

\section{Mixed Search Games}

Mixed search games model the process of clearing a contaminated graph by placing and moving searchers \cite{bienstock1991monotonicity,best2016contraction,heggernes2008mixed,fomin2007mixed,takahashi1995mixed}.  
They are closely related to search numbers and several width parameters.  
Related games include the edge search game and the node search game \cite{giannopoulou2011min}.

\begin{definition}[Mixed Search Game]
Let \(G=(V,E)\) be a graph.  
Initially, every edge of \(G\) is considered \emph{contaminated}.  
The goal is to clear all edges using searchers.

An edge \(e\in E\) becomes \emph{cleared} if either
\begin{enumerate}
    \item searchers occupy both endpoints of \(e\), or
    \item a searcher slides along \(e\).
\end{enumerate}

A cleared edge becomes \emph{recontaminated} if there exists a path from a contaminated edge to that edge such that no vertex or edge on the path is occupied by a searcher.

A \emph{mixed search strategy} is a sequence of operations chosen from the following types:
\begin{enumerate}
    \item placing a searcher on a vertex,
    \item removing a searcher from a vertex,
    \item sliding a searcher along an edge to an adjacent vertex,
    \item sliding a searcher along an edge in such a way that the edge is cleared during the move.
\end{enumerate}

The objective is to clear all edges of \(G\) while preventing recontamination.  
The main optimization problem is to minimize the number of searchers required.
\end{definition}

\newpage

\chapter*{Disclaimer}
\section*{Funding}
This study was conducted without any financial support from external organizations or grants.

\section*{Acknowledgments}
We would like to express our sincere gratitude to everyone who provided valuable insights, support, and encouragement throughout this research. We also extend our thanks to the readers for their interest and to the authors of the referenced works, whose scholarly contributions have greatly influenced this study. Lastly, we are deeply grateful to the publishers and reviewers who facilitated the dissemination of this work.

\section*{Data Availability}
Since this research is purely theoretical and mathematical, no empirical data or computational analysis was utilized. Researchers are encouraged to expand upon these findings with data-oriented or experimental approaches in future studies.

\section*{Ethical Statement}
As this study does not involve experiments with human participants or animals, no ethical approval was required.

\section*{Conflicts of Interest}
The authors declare that they have no conflicts of interest related to the content or publication of this book.

\section*{Code Availability}
No code or software was developed for this study.

\section*{Use of Generative AI and AI-Assisted Tools}
I use generative AI and AI-assisted tools for tasks such as English grammar checking, and I do not employ them in any way that violates ethical standards.

\section*{Disclaimer (Others)}
This work presents theoretical ideas and frameworks that have not yet been empirically validated. Readers are encouraged to explore practical applications and further refine these concepts. Although care has been taken to ensure accuracy and appropriate citations, any errors or oversights are unintentional. The perspectives and interpretations expressed herein are solely those of the authors and do not necessarily reflect the viewpoints of their affiliated institutions.

\newpage

{%
\footnotesize
\setlength{\parskip}{0pt}%
\setlength{\itemsep}{0pt}%
\setlength{\parsep}{0pt}%
\setlength{\topsep}{0pt}%
\bibliographystyle{unsrt}
\bibliography{Inapproxi}%
}

\newpage

\thispagestyle{empty}
\pagecolor{blue!65!black}

\begin{tikzpicture}[remember picture,overlay]
    \fill[white,opacity=0.045]
      ([xshift=2.0mm]current page.south west)
      rectangle
      ([xshift=-2.0mm]current page.north east);

    \draw[line width=0.35pt, white!70]
      ([xshift=0.75mm,yshift=2.5mm]current page.south west) --
      ([xshift=0.75mm,yshift=-2.5mm]current page.north west);

    \draw[line width=0.35pt, white!70]
      ([xshift=-0.75mm,yshift=2.5mm]current page.south east) --
      ([xshift=-0.75mm,yshift=-2.5mm]current page.north east);
\end{tikzpicture}

\null
\clearpage
\pagecolor{white}

\end{document}